\numberwithin{equation}{section}
\newtheorem{thm}{Theorem}[section]
\newtheorem{thmA}{Theorem}
\newtheorem{assumption}[thm]{Assumption}
\newtheorem*{thm*}{Theorem}
\newtheorem{lm}[thm]{Lemma}
\newtheorem*{cor*}{Corollary}
\newtheorem{prop}[thm]{Proposition}
\newtheorem*{conj*}{Conjecture}
\theoremstyle{Remark}
\theoremstyle{definition}
\newtheorem*{defn*}{Definition}
\newtheorem{I_Remark*}{Remark}
\newcommand{\nc}{\newcommand}
\newcommand{\beq}{\begin{equation}}
\newcommand{\eeq}{\end{equation}}
\newcommand{\bpmx}{\begin{pmatrix}}
\newcommand{\epmx}{\end{pmatrix}}
\newcommand{\bbmx}{\begin{bmatrix}}
\newcommand{\ebmx}{\end{bmatrix}}
\def\parref#1{\ref{#1}}
\def\thmref#1{Theorem~\parref{#1}}
\def\propref#1{Proposition~\parref{#1}}
\def\secref#1{\S\parref{#1}}
\def\lmref#1{Lemma~\parref{#1}}
\def\makeop#1{\expandafter\def\csname#1\endcsname
  {\mathop{\rm #1}\nolimits}\ignorespaces}
\def\makebb#1{\expandafter\def
  \csname bb#1\endcsname{{\mathbb{#1}}}\ignorespaces}
\def\makebf#1{\expandafter\def\csname bf#1\endcsname{{\bf
      #1}}\ignorespaces}
\def\makegr#1{\expandafter\def
  \csname gr#1\endcsname{{\mathfrak{#1}}}\ignorespaces}
\def\makescr#1{\expandafter\def
  \csname scr#1\endcsname{{\EuScript{#1}}}\ignorespaces}
\def\makecal#1{\expandafter\def\csname cal#1\endcsname{{\mathcal
      #1}}\ignorespaces}
\def\doLetters#1{#1A #1B #1C #1D #1E #1F #1G #1H #1I #1J #1K #1L #1M
                 #1N #1O #1P #1Q #1R #1S #1T #1U #1V #1W #1X #1Y #1Z}
\def\doletters#1{#1a #1b #1c #1d #1e #1f #1g #1h #1i #1j #1k #1l #1m
                 #1n #1o #1p #1q #1r #1s #1t #1u #1v #1w #1x #1y #1z}
    \def\setminus{\smallsetminus}
\def\cR{{\mathcal R}}
\def\cS{{\mathcal S}}
\def\cW{{\mathcal W}}
\def\cV{{\mathcal V}}
\def\sR{\mathscr R}
\newcommand{\C}{\mathbf C}
\newcommand{\A}{\mathbf A}    
\def\bbE{{\mathbb E}}
\def\etale{{\'{e}tale }}
\def\ot{\otimes}
\def\hookto{\hookrightarrow}
\def\longto{\longrightarrow}
  \nc{\opp}{\mathrm{opp}} \nc{\ul}{\underline}
\def\XYmatrix{\xymatrix@M=8pt} 
\def\ncmd{\newcommand}
\ncmd{\xysubset}[1][r]{\ar@<-2.5pt>@{^(-}[#1]\ar@<2.5pt>@{_(-}[#1]}
\ncmd{\XYmatrixc}[1]{\vcenter{\XYmatrix{#1}}}
\ncmd{\xyto}[1][r]{\ar@{->}[#1]}
\ncmd{\xyinj}[1][r]{\ar@{^(->}[#1]}
\ncmd{\xysurj}[1][r]{\ar@{->>}[#1]}
\ncmd{\xyline}[1][r]{\ar@{-}[#1]}
\ncmd{\xydotsto}[1][r]{\ar@{.>}[#1]}
\ncmd{\xydots}[1][r]{\ar@{.}[#1]}
\ncmd{\xyleadsto}[1][r]{\ar@{~>}[#1]}
\ncmd{\xyeq}[1][r]{\ar@{=}[#1]} \ncmd{\xyequal}[1][r]{\ar@{=}[#1]}
\ncmd{\xyequals}[1][r]{\ar@{=}[#1]}
\ncmd{\xymapsto}[1][r]{l\ar@{|->}[#1]}\ncmd{\xyimplies}[1][r]{\ar@{=>}[#1]}
\ncmd{\xyiso}{\ar[r]_-{\sim}}
\def\injxy{\ar@{^(->}}
\newcommand{\pMX}[4]{\begin{pmatrix}
{#1}& {#2}\\
{#3}&{#4}\end{pmatrix} }
\newcommand{\seesaw}[4]{{#1}\ar@{-}[rd]\ar@{-}[d]&{#2}\ar@{-}[d]\\
{#3}\ar@{-}[ru]&{#4}}
\def\x{{\times}}
\newcommand\stt[1]{\left\{#1\right\}}
\renewcommand\Re{\text{Re}\,}
\def\G{{\rm G}}
\def\U{{\rm U}}
\def\M{{\rm Mat}}
\def\t{\tilde}
\def\b{\bar}
\def\u{\underline}
\def\bt{\boxtimes}
\def\h{\hat}
\title{On gamma factors of generic representations of $\U_{2n+1}\x{\rm Res}_{E/F}\GL_r$}
\author{Yao Cheng and Chian-Jen Wang}
\date{\today}
\address{No. 151, Yingzhuan Road, Tamsui District, New Taipei City 251, Taiwan (R.O.C),  Lui-Hsien Memorial 
Science Hall.}
\email{briancheng@o365.tku.edu.tw}
\email{142888@o365.tku.edu.tw}
\begin{document}
\maketitle
\begin{abstract}
In this article we prove that the gamma factors attached to generic representations of $\U_{2n+1}\x{\rm Res}_{E/F}\GL_r$
over a local field $F$ defined by various approaches coincide when $F$ is non-archimedean and under additional 
assumptions on $n,r$ and $E$ when $F$ is archimedean.
\end{abstract}

\section{Introduction}

\subsection{Main Results}
Let $F$ be a local field of characteristic zero, $E$ an \etale $F$-algebra of rank $2$ and
$\psi$ be a non-trivial additive character of $F$.
Denote by $\G_N={\Res}_{E/F}\GL_N$ the Weil 
restriction of $\GL_N$ from $E$ to $F$, and $\U_N\subset\G_N$ a quasi-split unitary group of $N$-variables.
Let $\pi$ and $\tau$ be irreducible generic (complex) representations of $\U_{2n+1}(F)$ and $\G_r(F)$ 
respectively with $n\ge 0$ and $r\ge 1$. To $\pi, \tau$ and $\psi$, one can define the gamma factors through the following 
approaches:

\begin{itemize}
\item (WD) the associated Weil-Delinge representation (\cite{Mok2015}, \cite{Tate1979}); 
\item (LS) the Langlands-Shahidi method (\cite{Shahidi1990});
\item (RS) the Rankin-Selberg integrals (\cite{Ben-ArtziSoudry2009}, \cite{MorimotoSoudry2020}).
\end{itemize}
One then expects that these approaches give essentially the same (possibly up to an exponential) gamma factors. 
The main theme of this article is to verify (resp. partially verify) this expectation when $F$ is non-archimedean (resp. 
archimedean). More concretely, let us denote by $\gamma^*(s,\pi\x\tau,\psi)$ the gamma factors defined by the associated 
approach, where $*={\rm WD}, {\rm LS}$ or ${\rm RS}$. Then our first result is following.

\begin{thm}\label{T:main}
We have
\[
\gamma^{{\rm LS}}(s,\pi\x\tau,\psi)
=
\gamma^{{\rm WD}}(s,\pi\x\tau,\psi).
\]
\end{thm}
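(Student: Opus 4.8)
The plan is to reduce, by multiplicativity of $\gamma$-factors, to the supercuspidal case, and then run a global comparison. Both $\gamma^{{\rm WD}}$ and $\gamma^{{\rm LS}}$ are multiplicative along parabolic induction: on the Weil--Deligne side because Artin $L$- and $\varepsilon$-factors are additive on direct sums of representations of the Weil--Deligne group, and on the Langlands--Shahidi side by Shahidi's multiplicativity of the local coefficient. Since every irreducible generic representation of $\U_{2n+1}(F)$ (resp. $\G_r(F)$) is a fully induced representation from an essentially square-integrable --- and ultimately supercuspidal --- representation of a Levi subgroup $\G_k\times\U_{2n+1-2k}$ (resp. of a product of $\G_{k_i}$'s), an induction on $n$ and $r$ brings Theorem~\ref{T:main} down to the case where $\pi$ and $\tau$ are supercuspidal. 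In that reduction one must keep track of the Asai $\gamma$-factor of $\tau$ that accompanies the Rankin--Selberg $\gamma$-factor in the Langlands--Shahidi construction for the Levi $\G_r\times\U_{2n+1}$ of $\U_{2(n+r)+1}$; since it depends only on $\tau$, its Langlands--Shahidi versus Galois comparison is already known and may be divided off on both sides.

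For supercuspidal $\pi$ and $\tau$ I would globalize. Choose a number field $\mathcal F$ with a place $v_0$ and an isomorphism $\mathcal F_{v_0}\cong F$, an étale quadratic $\mathcal F$-algebra $\mathcal E$ with $\mathcal E\otimes_{\mathcal F}F\cong E$, and globally generic cuspidal automorphic representations $\Pi$ of $\U_{2n+1}(\A_{\mathcal F})$ and $\mathcal T$ of $\G_r(\A_{\mathcal F})$ with $\Pi_{v_0}\cong\pi$ and $\mathcal T_{v_0}\cong\tau$, unramified at every finite place away from $v_0$ except possibly at one auxiliary place where the components are taken to be Steinberg (to force cuspidality and global genericity of $\Pi$), and with a convenient choice at the archimedean places. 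Such globalizations are produced by a Poincaré-series argument (or by descent from $\GL$); this is the step where, for archimedean $F$, the extra hypotheses on $n$, $r$ and $E$ are expected to enter.

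Next I would play off two global functional equations. The Langlands--Shahidi machinery attaches to $\Pi\times\mathcal T$ a completed $L$-function with a functional equation whose global $\varepsilon$-factor is the product of the local $\varepsilon^{{\rm LS}}_v$, so that $\prod_v\gamma^{{\rm LS}}(s,\Pi_v\times\mathcal T_v,\psi_v)=1$. On the other side, Mok's (weak) base change lifts $\Pi$ to an automorphic representation $\mathrm{BC}(\Pi)$ on $\GL_{2n+1}(\A_{\mathcal E})$, which identifies the Weil--Deligne-side $L$-function of $\Pi\times\mathcal T$ with the Rankin--Selberg $L$-function of $\mathrm{BC}(\Pi)\times\mathcal T$ on $\GL_{2n+1}(\A_{\mathcal E})\times\GL_r(\A_{\mathcal E})$; its Jacquet--Piatetski-Shapiro--Shalika functional equation, together with the compatibility of the local Langlands correspondence for $\GL$ with local constants, gives $\prod_v\gamma^{{\rm WD}}(s,\Pi_v\times\mathcal T_v,\psi_v)=1$. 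At every place $v\neq v_0$ the two local $\gamma$-factors coincide: at unramified finite places both are read off the Satake parameters by the same formula (using that unramified base change is Satake-compatible), at the auxiliary Steinberg place both are computed directly, and at archimedean places one invokes Shahidi's identity $\gamma^{{\rm LS}}=\gamma^{{\rm WD}}$ for generic representations. Dividing the two product identities, every factor cancels except at $v_0$, and one obtains $\gamma^{{\rm LS}}(s,\pi\times\tau,\psi)=\gamma^{{\rm WD}}(s,\pi\times\tau,\psi)$.

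The main obstacle, as usual in this circle of ideas, is the globalization: producing a cuspidal, globally generic automorphic representation with a prescribed supercuspidal component at $v_0$ and ramification controlled tightly enough (Steinberg or unramified) at all other finite places, while matching the quadratic étale algebra and the central characters. A secondary but genuine difficulty is the careful bookkeeping of normalizations --- the shift $s\mapsto 1-s$, the dependence on $\psi$, and the Asai cross term --- so that the Langlands--Shahidi functional equation and the one on the Galois side are compared correctly and no spurious exponential factor is introduced, yielding the equality on the nose.
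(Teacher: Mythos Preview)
Your outline is correct and follows essentially the same route as the paper: reduce by multiplicativity of both $\gamma^{\mathrm{LS}}$ and $\gamma^{\mathrm{WD}}$, globalize, invoke Mok's base change to transport the Weil--Deligne side to Rankin--Selberg on $\GL_{2n+1}(\A_{\mathcal E})\times\GL_r(\A_{\mathcal E})$, and compare the two global functional equations. The paper organizes the reduction slightly differently --- it reduces only $\tau$ to supercuspidal, then treats $\pi$ square-integrable directly by globalization (their Lemma~\ref{L:globalization}, which for non-supercuspidal discrete series uses the Gan--Ichino/Shin construction and Ginzburg--Rallis--Soudry descent), and afterwards deduces the tempered and non-tempered cases from the square-integrable one by multiplicativity along the $L$-parameter decomposition --- whereas you reduce $\pi$ all the way to supercuspidal first, which is also fine and lets you globalize with the simpler Shahidi Prop.~5.1.

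One practical point worth noting: rather than introducing an auxiliary Steinberg place (which would oblige you to verify $\gamma^{\mathrm{LS}}=\gamma^{\mathrm{WD}}$ separately there), the paper places all auxiliary ramification at \emph{split} places of $\mathcal E/\mathcal F$, where $\U_{2n+1}(F_v)\cong\GL_{2n+1}(F_v)$ and the identity is the known $\GL$ case; it also takes $\mathcal F$ totally complex so that every archimedean place is split as well. This makes the local comparison at all $v\neq v_0$ automatic and avoids any extra computation. Your remark about the Asai factor is harmless but unnecessary here: the theorem concerns only the Rankin--Selberg piece of the Langlands--Shahidi construction, and multiplicativity is applied directly to that factor.
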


We must point out that when $F$ is archimedean, or $E=F\,\x\,F$, or $F$ is non-archimedean and $\pi$, $\tau$ are 
unramified, \thmref{T:main} was already obtained by Shahidi (\cite{Shahidi1985}, \cite{Shahidi1990}). In fact, 
\thmref{T:main} is proved by combining the results in op. cit. and works of Mok (\cite{Mok2015}), 
Adrian-Henniart-Kaplan-Oi (\cite{AHKO}) together with standard arguments 
(cf. \cite{JiangSoudry2004}, \cite{CKPSS2004}, \cite{KK2005}). Our next result concerns the relation between 
$\gamma^{{\rm RS}}(s,\pi\x\tau,\psi)$ and $\gamma^{{\rm LS}}(s,\pi\x\tau,\psi)$; however, since we need to impose some 
assumptions when $F$ is archimedean, we only state our results when $F$ is non-archimedean in the following, in order 
to make the statements more concise. The results for archimedean local fields can be found in the main body of the article 
(see \propref{P:main'}).

\begin{thm}\label{T:main'}
Suppose that $F$ is non-archimedean. Then we have
\[
\gamma^{{\rm RS}}(s,\pi\x\tau,\psi)
=
\gamma^{{\rm LS}}(s,\pi\x\tau,\psi).
\]
\end{thm}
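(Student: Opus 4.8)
The plan is to reduce Theorem~\ref{T:main'} to Theorem~\ref{T:main} by establishing, in parallel with the Langlands--Shahidi theory, the two structural properties that characterize the gamma factor uniquely: a multiplicativity property in $\tau$ (and, if possible, in $\pi$), and the correct behavior on unramified data. More precisely, the Rankin--Selberg integrals of Ben-Artzi--Soudry and Morimoto--Soudry attach to $\pi\times\tau$ a local zeta integral, and the quotient of the ``dual'' integral by the integral, up to the relevant intertwining/normalization, defines $\gamma^{\mathrm{RS}}(s,\pi\times\tau,\psi)$; one knows by the local theory of these integrals that it lies in $\C(q^{-s})$ and satisfies a functional equation. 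The first step is therefore to collect from \cite{Ben-ArtziSoudry2009} and \cite{MorimotoSoudry2020} the precise statements: the zeta integrals converge in a right half-plane, continue meromorphically, are non-vanishing after suitable normalization (existence of a ``good section''), and satisfy the expected functional equation defining $\gamma^{\mathrm{RS}}$. This is bookkeeping but must be done carefully because the archimedean hypotheses in \propref{P:main'} enter exactly here.

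The second step is multiplicativity. For $\tau = \mathrm{Ind}(\tau_1\otimes\tau_2)$ a representation induced from a parabolic of $\G_r$, I would show
\[
\gamma^{\mathrm{RS}}(s,\pi\times\tau,\psi) = \gamma^{\mathrm{RS}}(s,\pi\times\tau_1,\psi)\,\gamma^{\mathrm{RS}}(s,\pi\times\tau_2,\psi),
\]
by the standard ``swapping the order of integration'' argument: one inserts the explicit realization of the induced representation into the zeta integral, unfolds, and recognizes the inner integrals as zeta integrals for the smaller pieces. The same property holds on the Langlands--Shahidi side (it is one of Shahidi's axioms, valid by \cite{Shahidi1990}), so by induction on $r$ this reduces the comparison to the case where $\tau$ is (essentially) a discrete series, or even supercuspidal, of $\G_r$. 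If an analogous multiplicativity in the $\U_{2n+1}$-variable is available for these integrals, I would use it to reduce further on the $\pi$-side; otherwise I would proceed with $\pi$ generic and only $\tau$ reduced.

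The third step is the stable/unramified case. In the fully unramified situation both $\gamma^{\mathrm{RS}}$ and $\gamma^{\mathrm{LS}}$ are computed by an explicit (Casselman--Shalika type, or Gindikin--Karpelevi\v{c}) calculation and are known to agree --- on the Langlands--Shahidi side this is Shahidi's computation already invoked for Theorem~\ref{T:main}, and on the Rankin--Selberg side it is the unramified computation in \cite{Ben-ArtziSoudry2009}/\cite{MorimotoSoudry2020}. To upgrade this to arbitrary $\pi,\tau$ one uses a global-to-local argument: realize the given local representations as local components of suitable cuspidal automorphic representations (via Poincar\'e series / simultaneous approximation, as in \cite{JiangSoudry2004}, \cite{CKPSS2004}, \cite{KK2005}), apply the global functional equation for both the Rankin--Selberg global integral and the Langlands--Shahidi global $L$-function, and conclude that the products over all places of the respective local gamma factors agree; since the factors at all but one place were matched (unramified places directly, the remaining ramified auxiliary places by the already-reduced cases or by a local-constancy/stability argument à la Cogdell--Shahidi--Tsai), the factors at the distinguished place must agree as well.

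The main obstacle is the last step --- the simultaneous globalization. One must produce a global quasi-split unitary group with the prescribed local behaviour and cuspidal $\pi$, $\tau$ whose local component at the chosen place is the given representation (or at least has the same gamma factor, e.g.\ via a stability-of-$\gamma$-factors argument for highly ramified twists), while keeping the other ramified places under control so that their contributions can be matched; this is where archimedean places force the extra hypotheses on $n, r, E$ recorded in \propref{P:main'}. A secondary technical point is verifying that the Rankin--Selberg integral is ``complete'' (no missing Euler factors) and that its normalization matches the one implicit in $\gamma^{\mathrm{LS}}$ up to at most an exponential factor $c\cdot q^{-s}$, which one then pins down by comparing at $s$-values where both sides are explicitly known.
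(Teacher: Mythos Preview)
Your overall strategy matches the paper's: reduce via multiplicativity (in both variables --- these are Theorems~\ref{T:main''} and~\ref{T:main'''}), check the unramified case, and run a global-to-local argument comparing the RS and LS global functional equations. Where you diverge is in the treatment of the range $n<r-1$, and this is a genuine gap rather than a bookkeeping issue.

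You note in passing that archimedean places ``force the extra hypotheses on $n,r,E$ recorded in \propref{P:main'}'', but you do not identify the concrete obstruction or propose a way around it. The obstruction is that when $F$ is archimedean and $n<r-1$, the Rankin--Selberg gamma factor $\gamma^{\mathrm{RS}}(s,\pi\times\tau,\psi)$ is not even defined in this paper (the integrals are not known to admit meromorphic continuation there). Consequently, for a global pair $(\mathit{\Pi},\mathit{\Sigma})$ on $\mathbb{U}_{2n+1}\times\G_r$ with $n<r-1$, the global RS functional equation has no archimedean local factors to insert, and your Step~3 cannot be carried out as written. No amount of care in choosing the globalization or invoking stability of $\gamma$-factors repairs this, because the object you want to compare simply does not exist at the complex places.

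The paper's fix is \lmref{L:sqr quot}: given supercuspidal $\pi$ on $\U_{2n+1}(F)$ and $r>n+1$, one finds $k\ge 1$ and an irreducible generic $\sigma$ on $\G_k(F)$ so that $k+n\ge r-1$ and the induced representation $I^{\U_{2(n+k)+1}(F)}(\sigma;\pi)$ has an irreducible \emph{square-integrable} generic quotient $\eta$. One then globalizes $\eta$ (square-integrability permits this via \lmref{L:globalization}) and runs the global argument for $\U_{2(n+k)+1}\times\G_r$, where now $n+k\ge r-1$ and the archimedean comparison is available. Multiplicativity in the first variable (Theorem~\ref{T:main''}) then strips off the $\sigma$-contribution, which is a product of $\GL$-type gamma factors already known to agree, and one recovers $\gamma^{\mathrm{RS}}(s,\pi\times\tau,\psi)=\gamma^{\mathrm{LS}}(s,\pi\times\tau,\psi)$. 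This embedding-into-a-larger-group step is the missing idea in your proposal.

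A smaller point: you treat multiplicativity in the $\U_{2n+1}$-variable as optional (``If \ldots\ is available \ldots\ otherwise I would proceed with $\pi$ generic''). In the paper it is essential, both for the initial reduction to supercuspidal $\pi$ and, decisively, for the $n<r-1$ trick just described.
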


As usual, \thmref{T:main'} is a consequence of the multiplicativity of $\gamma^{{\rm RS}}(s,\pi\x\tau,\psi)$, which will present
in the next two theorems. To unify the statement,  if $E=F\,\x\, F$ and $\sigma=\sigma_1\boxtimes\sigma_2$ is an irreducible
generic representation of $\G_k(F)=\GL_k(F)\x\GL_k(F)$, we denote
\[
\gamma^{{\rm WD}}(s,\sigma\x\eta,\psi)
=
\gamma^{{\rm WD}}(s,\sigma_1\x\eta,\psi)
\gamma^{{\rm WD}}(s,\sigma_2\x\eta,\psi)
\]
where $\eta$ is an irreducible generic representation of $\GL_m(F)$. The following theorem proves the multiplicativity of 
the Rankin-Selberg gamma factors with respect to the "1st variables" when $F$ is non-archimedean. 

\begin{thm}\label{T:main''}
Suppose that $F$ is non-archimedean. Then if $\pi$ is the generic quotient of the 
representation of $\U_{2n+1}(F)$, which is parabolically induced from an irreducible generic representation 
$\sigma\boxtimes\pi_0$ of $\G_k(F)\x\U_{2n_0+1}(F)$ for some integers $k\ge 1$ and $n_0\ge 0$ such that $k+n_0=n$, 
then 
\[
\gamma^{{\rm RS}}(s,\pi\x\tau,\psi)
=
\gamma^{{\rm RS}}(s,\pi_0\x\tau,\psi)
\gamma^{{\rm WD}}(s,\sigma\x\tau_1,\psi)
\gamma^{{\rm WD}}(s,\tilde{\sigma}\x\tau_2,\psi)
\]
where $\tilde{\sigma}$ is the contragredient of $\sigma$; $\tau_1$, $\tau_2$ are irreducible generic representations of 
$\GL_r(F)$ such that $\tau=\tau_1\boxtimes\tau_2$ when $E=F\,\x\, F$, and we understand that $\tau=\tau_1=\tau_2$ when 
$E$ is a field.
\end{thm}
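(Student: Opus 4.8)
The plan is to establish the multiplicativity of $\gamma^{\rm RS}$ with respect to the first variable by a direct manipulation of the Rankin-Selberg integrals of \cite{Ben-ArtziSoudry2009}, \cite{MorimotoSoudry2020}, following the now-standard strategy for such results (\cite{JiangSoudry2004}, \cite{CKPSS2004}, \cite{Soudry-book}). First I would fix a Whittaker functional on the induced representation $\mathrm{Ind}_{P}^{\U_{2n+1}}(\sigma\boxtimes\pi_0)$ realizing $\pi$ as its generic quotient, and express the Rankin-Selberg integral $\Psi(s, W_\pi, W_\tau, \Phi)$ attached to $\pi\times\tau$ in terms of a Whittaker function $W_\pi$ built by an explicit integration-over-the-unipotent-radical formula from Whittaker functions of $\sigma$ and $\pi_0$. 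The key computational step is to unfold this integral: after choosing the data appropriately (in particular a section supported near the "large cell"), one decomposes the domain of integration and identifies the inner integral as a product of a Rankin-Selberg integral for $\pi_0\times\tau$ and two $\GL$-type integrals — one pairing $\sigma$ against $\tau_1$ and one pairing $\tilde\sigma$ against $\tau_2$ (when $E$ is a field these collapse to a single $\GL_k(E)\times\GL_r(E)$ integral, which by restriction of scalars contributes the two factors $\gamma^{\rm WD}(s,\sigma\times\tau,\psi)\gamma^{\rm WD}(s,\tilde\sigma\times\tau,\psi)$ through the base-change compatibility). This gives a factorization of the local integral, and hence of the gamma factor defined through the functional equation, provided one controls the normalizing intertwining operator that appears when one applies the functional equation to the induced section.

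Concretely, the steps in order are: (i) recall the precise form of the Rankin-Selberg integral and its functional equation defining $\gamma^{\rm RS}$; (ii) set up the induced representation and the explicit Whittaker function of $\pi$ via the "Jacquet integral" over the unipotent radical of $P$; (iii) substitute into $\Psi(s,\cdot,\cdot,\cdot)$ and perform the iterated unfolding, using root-subgroup manipulations and a suitable Iwasawa-type decomposition to separate the $\U_{2n_0+1}$-part from the $\GL$-parts; (iv) recognize the resulting inner integrals as $\Psi(s, W_{\pi_0}, W_\tau, \Phi')$ and as the Jacquet–Piatetski-Shapiro–Shalika type integrals for $\GL_k\times\GL_r$, invoking their known relation to $\gamma^{\rm WD}$ (equivalently $\gamma^{\rm LS}$ via Theorem \ref{T:main} applied to general linear groups, where all three coincide); (v) apply the functional equation on both sides and compare, tracking the intertwining operator $M(s,\sigma)$, whose own gamma factor must match the $\sigma$-versus-$\tilde\sigma$ product — this last identity is exactly the $\GL$-multiplicativity already in the literature. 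A meromorphy/continuation argument then upgrades the identity from the region of absolute convergence to all $s$, and a density argument removes the restriction on the choice of data.

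I expect the main obstacle to be step (iii)–(iv): carrying out the unfolding cleanly so that the cross-terms involving the off-diagonal root subgroups of the Levi $\G_k\times\U_{2n_0+1}$ either vanish or reassemble into the correct $\GL$-integrals. This requires a careful choice of representatives for the relevant double cosets and a delicate change of variables in the unipotent integration; the non-quasi-split subtleties of $\U_{2n+1}$ (the odd variable, the anisotropic kernel of the Hermitian form) make the Bruhat-cell bookkeeping more involved than in the $\GL\times\GL$ or split classical-group cases. A secondary difficulty is matching the precise exponential/constant factors coming from the two possible definitions of $\gamma^{\rm WD}(s,\sigma\times\tau,\psi)$ in the split-$E$ versus field-$E$ cases, which one handles by the unified convention introduced just before the statement. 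Since $F$ is non-archimedean throughout, convergence and meromorphic continuation of all integrals are available from the cited references, so no archimedean estimates intervene here.
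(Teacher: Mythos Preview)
Your overall strategy is correct and matches the paper's approach: express $W_\pi$ via the Jacquet integral over the unipotent radical of $\bar P$, substitute into the Rankin--Selberg integral, unfold so as to separate a $\pi_0\times\tau$ Rankin--Selberg integral from two $\GL_k\times\GL_r$ Jacquet--Piatetski-Shapiro--Shalika integrals, and apply the $\GL$ functional equation \eqref{E:FE GL} to produce the $\gamma^{\rm WD}$ factors. The paper carries this out as a direct manipulation valid for all data (following \cite{Soudry1993}, \cite{Soudry2000}), not via a special ``large cell'' section.

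The one structural ingredient you do not mention is the reduction step (\lmref{L:reduction}): rather than unfold for arbitrary $n,r$, the paper first reduces to the case $n<r$ by invoking \thmref{T:main'''} (multiplicativity in the second variable). The trick, due to Soudry, is to embed $\tau$ into an irreducible generic induced representation $\eta$ of $\G_{r+m}$ with $r+m>n$, apply the already-established $n<r$ case to $\pi\times\eta$, and then use multiplicativity in $\tau$ to strip off the extra factors. This buys a single unfolding computation (only the $n<r$ integral of \cite{MorimotoSoudry2020}, culminating in the key identity \eqref{E:main id for main''}) instead of two separate ones; your plan would require treating the $n\ge r$ integral of \cite{Ben-ArtziSoudry2009} as well, which is feasible but longer. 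Two minor corrections: the group $\U_{2n+1}$ here is quasi-split throughout, so there are no ``non-quasi-split subtleties''; and the intertwining operator in the functional equation acts on the $\rho_{\tau,s}$ side, not on the $\sigma$-induction, so there is no $M(s,\sigma)$ to track---the $\sigma$ and $\tilde\sigma$ factors arise directly from the $\GL$ functional equation applied inside the unfolded integral.
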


The gamma factors $\gamma^{{\rm WD}}(s,\sigma\x\tau,\psi)$ and $\gamma^{{\rm WD}}(s,\tilde{\sigma}\x\tau,\psi)$
in \thmref{T:main''} are the ones defined by the associated Weil-Deligne representations through the local Langlands 
correspondence for the $F$-groups $\G_k$ and $\G_r$ (\cite{Langlands1989}, \cite{HarrisTaylor2001}, \cite{Henniart2000}, 
\cite{Scholze2013}), which are the same with those defined by the Langlands-Shahidi method (\cite{Shahidi1981},
\cite{Shahidi1985}). Up to powers of Langlands' $\lambda$-function attached to $E/F$ and $\psi$, they also equal to those 
defined by the Rankin-Selberg integrals (\cite{JPSS1983}, \cite{Jacquet2009}).\\
 
Our least main result verifies the multiplicativity of the Rankin-Selberg gamma factors with respect to the "2nd variables."

\begin{thm}\label{T:main'''}
Suppose that $F$ is non-archimedean. Then if $\tau$ is the generic quotient of the 
induced representation of $\G_r(F)$, which is parabolically induced from an irreducible generic representation 
$\tau'\boxtimes\tau''$ of $\G_{r'}(F)\,\x\,\G_{r''}(F)$ for some integers $r'>0$ and $r''>0$ such that $r'+r''=r$, then
\[
\gamma^{{\rm RS}}(s,\pi\x\tau,\psi)
=
\gamma^{{\rm RS}}(s,\pi\x\tau',\psi)
\gamma^{{\rm RS}}(s,\pi\x\tau'',\psi).
\]
\end{thm}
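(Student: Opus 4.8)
The plan is to deduce the stated multiplicativity from the defining functional equation of $\gamma^{\rm RS}(s,\pi\x\tau,\psi)$ together with an explicit unfolding of the Rankin--Selberg integral when the $\G_r$-datum is parabolically induced, following the template of Jacquet--Piatetski-Shapiro--Shalika (\cite{JPSS1983}) adapted to the integrals of Ben-Artzi--Soudry and Morimoto--Soudry (\cite{Ben-ArtziSoudry2009}, \cite{MorimotoSoudry2020}). First I would recall the Rankin--Selberg integral $Z(s,W,f_s)$ attached to $\pi$ and $\tau$: here $W$ runs over the $\psi$-Whittaker model $\cW(\pi,\psi)$ of $\pi$, and $f_s$ runs over a family of holomorphic sections of a (possibly parabolically induced) representation $\rho_{\tau,s}$ built from a twist $\tau\Abs^{s}$ of the Whittaker model of $\tau$, and $\gamma^{\rm RS}(s,\pi\x\tau,\psi)$ is characterized by the identity
\[
Z\bigl(1-s,\,W,\,M(\tau,s)f_s\bigr)\;=\;\gamma^{\rm RS}(s,\pi\x\tau,\psi)\,Z(s,W,f_s),
\]
holding for all $W$ and all sections $f_s$, where $M(\tau,s)$ is the intertwining operator underlying the functional equation, relating $\rho_{\tau,s}$ to the analogous object built from $\tau$ at the point $1-s$ (all objects interpreted componentwise when $E=F\x F$). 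I would first isolate from op.\ cit.\ the two inputs that make the argument run: (i) $Z(s,W,f_s)$ is rational in $q^{-s}$ and the functional equation holds identically in $(W,f_s)$, so that $\gamma^{\rm RS}$ does not depend on any choices; and (ii) the integral defining $Z$ converges in a right half-plane and continues meromorphically.

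The core step is the unfolding. Realize $\tau$ as the generic quotient of $\mathrm{Ind}_{P_{r',r''}}^{\G_r}(\tau'\boxtimes\tau'')$ and write its $\psi$-Whittaker functional as the (stabilized) Jacquet integral over the unipotent radical $\ol N$ of the opposite parabolic, applied to the Whittaker functionals of $\tau'$ and $\tau''$. Propagating this through induction in stages, every section $f_s$ of $\rho_{\tau,s}$ becomes an absolutely convergent $\ol N$-integral of sections of the representation induced from $\tau'\otimes\Abs^{\bullet}\boxtimes\tau''\otimes\Abs^{\bullet}$ at suitably shifted spectral parameters. Substituting into $Z(s,W,f_s)$ and interchanging the order of integration (legitimate in the region of convergence, then extended by continuation), the extra $\ol N$-integration merges with the unipotent period already present in $Z$; a bookkeeping of the relevant root subgroups then exhibits the resulting multiple integral as a product in which one block of variables reconstitutes the Rankin--Selberg integral for $\pi\x\tau'$ and the complementary block reconstitutes the one for $\pi\x\tau''$, with the \emph{same} $W$ in each. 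Performing the parallel decomposition of $M(\tau,s)$ --- which by induction in stages factors through $M(\tau',s)$ and $M(\tau'',s)$ up to \GinKarp factors coming from the $\GL_{r'}\x\GL_{r''}$-block --- and substituting both decompositions into the functional equation, the \GinKarp factors cancel against the normalization of the Jacquet integral used to realize $\cW(\tau,\psi)$, and what remains is precisely
\[
\gamma^{\rm RS}(s,\pi\x\tau,\psi)=\gamma^{\rm RS}(s,\pi\x\tau',\psi)\,\gamma^{\rm RS}(s,\pi\x\tau'',\psi).
\]

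I expect the main obstacle to be exactly this unfolding/matching step. One must choose the parabolic subgroups and the induction-in-stages filtration so that the $\ol N$-integration coming from the generic-quotient realization of $\tau$ aligns cleanly with the unipotent integration in $Z$, and then check that the split-off pieces are literally the defining integrals for $\pi\x\tau'$ and $\pi\x\tau''$ rather than twisted variants of them; this requires care with the coordinates of the root subgroups of $\U_{2n+1}$ and of the companion group on which $\rho_{\tau,s}$ lives, with the Weyl-element conjugations intervening in $M(\tau,s)$, and with the convergence and interchange of the multiple integrals. It is also here that the two cases $E=F\x F$ and $E$ a field must be handled uniformly, and that one must verify the Whittaker functional on $\rho_{\tau,s}$ factors through the generic quotient of $\mathrm{Ind}_{P_{r',r''}}^{\G_r}(\tau'\boxtimes\tau'')$. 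Once this geometric identity of integrals is established, the assertion follows formally from the uniqueness of $\gamma^{\rm RS}$.
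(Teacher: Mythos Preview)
Your high-level strategy is in the right family, but the central structural claim is wrong and would not close. You assert that after unfolding the Jacquet integral for the Whittaker functional of $\tau$, the Rankin--Selberg integral $Z(s,W,f_s)$ becomes a \emph{product} in which one block of variables reconstitutes the integral for $\pi\times\tau'$ and a complementary block reconstitutes the integral for $\pi\times\tau''$, both with the same $W$. This cannot happen: both of those integrals involve integrating right translates of the single Whittaker function $W_v$ over (unipotent extensions of) $\U_{2r'}$ and $\U_{2r''}$ respectively, and there is no way to decouple a single $W_v$ into independent factors. The correct picture, which is what the paper actually proves, is a \emph{nesting} rather than a factorization: one shows that $\Psi_{n,r}(v\otimes\xi_s)$ can be rewritten as an outer integral whose \emph{inner} integral is literally the Rankin--Selberg integral $\Psi_{n,r''}$ for $\pi\times\tau''$ (with the outer variables entering only through a right translate of the section). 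This is the identity \eqref{E:main id for main'''}, and its proof requires a rather delicate case analysis according to whether $n<r''$, $r''\le n<r$, or $r\le n$, since the shape of the unipotent groups $\b{X}^{n,r}$, $\b{X}_{n,r}$ changes across these regimes.

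Once \eqref{E:main id for main'''} is in hand, the argument runs by applying the functional equation for $\pi\times\tau''$ to the inner integral, which produces the factor $\Gamma_\delta(s,\pi\times\tau'',\psi)$; then one repeats with $\tau'$. The glue is the factorization of the normalized intertwining operator $A_{\psi,\delta}(w_{r,r},\tau,s)$ into four pieces, two of which are the $\tau'$- and $\tau''$-operators and two of which are cross operators swapping $\tau'_j$ with $\tau''_{3-j}$. Your remark that the cross terms are ``Gindikin--Karpelevi\v{c} factors cancelling against the Whittaker normalization'' is morally right but imprecise: the point is that these cross operators are already Whittaker-normalized, and since $\Psi_{n,r}$ only sees the section through its Whittaker values $f_{\xi_s}$, applying them leaves the integral unchanged (this is the argument borrowed from \cite[Section 8]{Kaplan2015}). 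So the missing ingredient in your plan is precisely the nested-integral identity \eqref{E:main id for main'''}; without it, the ``product'' heuristic does not give a proof.
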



As mentioned, all of these results are expected. In fact, the theory of Rankin-Selberg convolutions for $G\,\x\GL_r$ with
$G$ a classical group has long and rich history, starting from the classical works of Rankin and Selberg. Their results (for 
$G=\GL_2$ and $r=2$) were later reformulated by Jacquet in the representation-theoretic framework in \cite{JLBook2}. 
Together with Piatetski-Shapiro and Shalika, such a theory was extended to $\GL_n\x\GL_r$ for any $n, r$ in 
\cite{JPSS1983}, \cite{Jacquet2009}. In \cite[Part B]{GPSR1987}, Gelbart and Piatetski-Shapiro developed the theory when 
$G$ is a split classical of type $B_r$, $C_r$ or $D_r$, which generalized low rank results in the literature. Since then, their 
constructions were extended by many authors in various settings including (to mention a few) Ginzburg 
(\cite{Ginzburg1990}), Tamir (\cite{Tamir1991}), Soudry (\cite{Soudry1993}, \cite{Soudry1995}, \cite{Soudry2000}), 
Watanabe (\cite{Watanabe2000}), Ben-Artzi--Soudry (\cite{Ben-ArtziSoudry2009}), Kaplan (\cite{Kaplan2010}, 
\cite{Kaplan2013b}, \cite{Kaplan2015}), Morimoto-Soudry (\cite{MorimotoSoudry2020}) and Morimoto (\cite{Morimoto}). 
Similar results to ours for other classical groups have been established in the aforementioned references. Furthermore, it is 
expected that the techniques will also be applicable to our setting. Our primary motivation for documenting these results, 
is rooted in our desire to eliminate the assumption made in \cite{YCheng2}. Another reason is that we hope 
this article, despite its incomplete results, can serve as a convenient reference.

\subsection{Outline of this article}
In \secref{S:gamma}, we will recall the definitions of the $\gamma$-factors via the associated Weil-Deligne representations 
as well as the Rankin-Selberg integrals. Then in \secref{S:proof of main}, we will prove \thmref{T:main}, and we will verify  
\thmref{T:main'} in \S\ref{S:proof of main'} assuming the validity of \thmref{T:main''}, \thmref{T:main'''}, minimal cases 
(i.e. $n=0$ and $r=1$), as well as the results for archimedean local fields. \thmref{T:main''} and \thmref{T:main'''} will be 
verified in \S\ref{S:1st} and \S\ref{S:2nd} respectively. In \S\ref{S:mini}, we will obtain some partial results when $F$ is 
archimedean, and establish \thmref{T:main'} when $n=0$ and $r=1$. Finally, in the Appendix, we will prove a multiplicity 
one result for certain Hom-space.

\subsection{Notation and conventions}

\subsubsection{Fields}
In this article, $F$ will always be a local field of characteristic zero, and $E$ an \etale $F$-algebra of rank $2$.  Thus $E$ is 
either a quadratic field extension of $F$ or $E=F\,\x\, F$. In the latter case, we always identify $F$ as a subfield of $E$ via 
the diagonal embedding. Denote by $\theta$ the generator of ${\rm Aut}_F(E)\cong\mathbb{Z}_2$. In particular, if $z\in E$, 
then both $z+\theta(z)$ and $z\theta(z)$ are contained in $F$.  Let $|\cdot|_F$ be the usual absolute on $F$, so that 
$|x|_\mathbb{R}={\rm max}\stt{x,-x}$,  $|x|_\mathbb{C}=x\bar{x}$ and when $F$ is non-archimedean,
it is characterized by $|\varpi|_F=q^{-1}$, where $\varpi\in F$ is a prime element and $q$ is the size of the residue field of 
$F$. The absolute value $|\cdot|_E$ on $E$ is then given by $|z|_E=|z\theta(z)|_F$. Throughout this article, we fix a 
non-trivial additive character $\psi$ of $F$ and define $\psi_E$ to be the non-trivial additive character of $E$ given by 
$\psi_E(x)=\psi(x+\theta(x))$ when $E$ is a filed. If $c\in F^\x$, we write $\psi_c$ for the character of $F$ defined by 
$\psi_c(x)=\psi(cx)$. We also fix an element $\delta\in E^\x$ with $\theta(\delta)=-\delta$ and put $\Delta=\delta^2\in F^\x$.
When $E=F\,\x\,F$, we simply take $\delta=(1,-1)$, so that $\Delta=1$.

\subsubsection{Matrices}
If $R$ is a ring, then ${\rm Mat}_{m, k}(R)$ will be the ring of $m$ by $k$ matrices with entries in $R$. If
$A\in{\rm Mat}_{m, k}(R)$, then we write $A_{ij}$ for the $(i,j)$-entry of $A$, and ${}^tA\in{\rm Mat}_{k, m}(R)$ for the 
transpose of $A$. The automorphism $\theta$ and its notation extends naturally to an involution on ${\rm Mat}_{m, k}(E)$, 
namely, we have $\theta(A)_{ij}=\theta(A_{ij})$ for every $A\in{\rm Mat}_{m, k}(E)$. When $E=F\,\x\, F$, we identify 
$\M_{m,k}(F)$ as a subring of $\M_{m,k}(E)=\M_{m,k}(F)\,\x\,\M_{m,k}(F)$ through the diagonal embedding.

\subsubsection{Groups}
Let $N$ be a positive integer. Denote by $\G_N={\Res}_{E/F}\GL_N$ the Weil restriction of $\GL_N$ from $E$ to $F$, so that 
$\G_N(F)=\GL_N(E)$ if $E$ is a field, whereas $\G_N(F)=\GL_N(F)\,\x\,\GL_N(F)$ if $E=F\,\x\,F$. 
Denote by $\U_N\subset\G_N$ a quasi-split unitary group over $F$ of $N$-variables When $E=F\,\x\, F$, we always identify 
$\U_N(F)$ with $\GL_N(F)$ through the projection onto the first component. Let $G$ be a connected reductive algebraic 
group over $F$, and $P\subset G$ be a parabolic subgroup. We denote by $\delta_P$ the modulus character of $P$. 
When $G$ is quasi-split, we usually need to fix a Borel subgroup of $G$ together with its Levi decomposition; such a Borel 
subgroup will be denoted by $B_G$ with the Levi decomposition $B_G=T_G\ltimes V_G$, where $T_G$ is a maximal torus 
in $B_G$ and $V_G$ is the unipotent of $B_G$. Then as usual, a parabolic subgroup $P$ of $G$ is said to be standard if 
$P\supseteq B_G$; we will always write $P=M_P\ltimes N_P$ for its unique Levi decomposition with $M_P\supseteq T_G$ 
and $N_P\subseteq V_G$. 

\subsubsection{Representations}
Let $G$ be a connected reductive algebraic group over $F$. In this article, by a representation of $G(F)$ we will 
always means a $smooth$ $complex$ representation of $finite$ $length$. The smoothness has its usual 
meaning when $F$ is non-archimedean, namely, every element has an open stabilizer. When $F$ is archimedean, it means 
a smooth admissible Fr\'echet representation of moderate growth in the sense of Casselman-Wallach 
(\cite{Casselman1989}). Let $\pi$ be a representation of $G(F)$, we usually denote $\cV_\pi$ as its (abstract) underlying 
space and $\omega_\pi$ as its central character (if exists). We write $\tilde{\pi}$ for the contragredient representation of 
$\pi$; when $F$ is archimedean, it can be defined as the Casselman-Wallach globalization of the contragredient of the 
Harish-Chandra module underlying $\pi$. If $G'$ is another group over $F$ and $\pi'$ is a representation of $G'(F)$, then 
$\pi\boxtimes\pi'$ stands for the usual external tensor product representation of $G(F)\,\x\,G'(F)$ on $\cV_\pi\ot\cV_{\pi'}$ 
when $F$ is non-archimedean; while when $F$ is archimedean, it represents the representation of $G(F)\,\x\,G'(F)$ on the
completed projective tensor product $\cV_\pi\widehat{\ot}\cV_{\pi'}$, which is actually the Casselman-Wallach completion of 
the tensor product of the Harish-Chendra modules underlying $\pi$ and $\pi'$. If $G=\GL_N$,  $\pi$ is a representation 
of $\GL_N(F)$ and $\mu$ is a character of $F^\x$, 
we denote by $\pi\ot\mu$ the representation of $\GL_N(F)$ whose underlying space is $\cV_\pi$ with the 
action $(\pi\ot\mu)(a)=\pi(a)\mu(\det(a))$. Similar convention applys with $F$ replaced by $E$.

\subsubsection{Generic representations}
Let $G$ be a quasi-split, connected reductive algebraic group over $F$, $V$ a maximal unipotent subgroup of $G$,
$\chi$ a non-degenerate character of $V(F)$ and $\pi$ a representation of $G(F)$. We say the $\pi$ is $generic$
(with respect to $\chi$) if the space ${\rm Hom}_{V(F)}(\pi,\chi)$ of Whittaker functionals is non-zero. When $F$ is
archimedean, we further require that Whittaker functionals are continuous. When $\pi$ is irreducible, it's known that the 
space of Whittaker functionals is at most one-dimensional. Suppose that $\pi$ is generic and admits a unique (up to scalars)
non-zero Whittaker functional $\lambda$, then space of complex-valued functions $W_v(g):=\lambda(\pi(g)v)$ on $G(F)$ 
for $v\in\cV_\pi$ is denoted by $\cW(\pi,\chi)$, which gives rise to a representation of $G(F)$ via right-translation; it becomes 
a quotient of $\pi$ unless $\pi$ is irreducible, in which case it is the usual Whittaker model of $\pi$.

\subsubsection{Induced representations}
Let $G$ be a connected reductive algebraic group over $F$, and $P\subset G$ be a parabolic subgroup with a Levi 
decomposition $P=M_P\ltimes N_P$. Let $\sigma$ be a representation of $M_P(F)$, which can be regarded 
as a representation of $P(F)$ via the natural projection $P(F)\twoheadrightarrow P(F)/N_P(F)\cong M_P(F)$. Denote by
$I_P^G(\sigma)={\rm Ind}_{P(F)}^{G(F)}(\sigma)$ the normalized induced representation of $G(F)$ inducing from $\sigma$.
Its underlying space $\cV_P^G(\sigma)$ consists of smooth functions $\xi:G(F)\to\cV_\sigma$ satisfying 
$\xi(pg)=\delta_P^{1/2}(p)\sigma(p)\xi(g)$ for $p\in P(F)$ and $g\in G(F)$. The group $G(F)$ acts on $\cV_P^G(\sigma)$
via the right translation $\rho$.

\section{Gamma factors via two approaches}\label{S:gamma}
In this section, we recall the definitions of the gamma factors through the approaches of the associated Weil-Deligne 
representations and the Rankin-Selberg integrals. 

\subsection{Gamma factors via Weil-Deligne representations}
\subsubsection{The $L$-groups}
Let $WD_F$ be the Weil-Delgine group of $F$, so that $WD_F=W_F$ if $F$ is archimedean whereas 
$WD_F=W_F\,\x\,{\rm SL}_2(\mathbb{C})$ if $F$ is non-archimedean, where $W_F$ is the Weil group of $F$. As usual, if 
$G$ is a quasi-split, connected reductive group over $F$, the $L$-group of $G$ is a semi-direct product 
\[
{}^LG=\widehat{G}\rtimes{\rm Gal}(K/F)
\]
where $\widehat{G}$ is the complex dual group and $K$ is a splitting field for $G$, with ${\rm Gal}(K/F)$ acting on 
$\widehat{G}$ via pinned automorphism. In our setting, we have $\widehat{\U}_N=\GL(\cV_N)$, 
$\widehat{\G}_N=\GL(\cV_N)\x\GL(\cV_N)$ and $K=E$ or $F$ according to $E$ is a field or not; here $\cV_N$ is the 
$N$-dimensional complex vector space. 

\subsubsection{$E$ is a field}
Suppose that $E$ is a field. Denote by $g\mapsto g^\theta$ the involution on $\widehat{\U}_N$ or $\widehat{\G}_N$ induced 
by $\theta$; we then have a natural $L$-embedding 
\[
{}^L\U_N\hookto{}^L\G_N;\quad g\rtimes c\mapsto (g,g^\theta)\rtimes c.
\]
Let $M\ge 1$ be another integer; the $L$-group of $\U_N\,\x\,\G_M$ is 
\[
(\widehat{\U}_N\,\x\,\widehat{\G}_M)\rtimes{\rm Gal}(E/F)
\]
with $\theta$ acting on the components of $\widehat{\U}_N\,\x\,\widehat{\G}_M$. One can replace $\U_N$ with $\G_N$ to 
obtain the $L$-group of $\G_N\,\x\,\G_M$; then the aforementioned $L$-embedding induces an embedding 
\[
\iota=\iota_{N,M}:{}^L(\U_N\,\x\,\G_M)\hookto{}^L(\G_N\,\x\,\G_M)
\]
in an obvious way. Define a representation $r=r_{N,M}:{}^L(\G_N\,\x\,\G_M)\to\GL(\cV_N\otimes\cV_M\oplus\cV_N\ot\cV_M)$ 
by the rules 
\[
r((g_1,g_2; h_1, h_2)\rtimes 1)=\pMX{g_1\ot h_1}{}{}{g_2\ot h_2};\quad
r((g_1,g_2; h_1, h_2)\rtimes\theta)=\pMX{}{g_1\ot h_1}{g_2\ot h_2}{}
\]
for $g_1,g_2\in\GL(\cV_N)$ and $h_1, h_2\in\GL(\cV_M)$.\\

Let $WD_F\twoheadrightarrow W_F\twoheadrightarrow W_F/W_E\cong{\rm Gal}(E/F)$ be the natural projection, which 
sending $w\in WD_F$ to $w'\in{\rm Gal}(E/F)$. Let $\phi:WD_F\to{}^L\U_N$ and $\eta:WD_F\to{}^L\G_M$ be two
$L$-homomorphisms; they can be written as  $\phi(w)=\hat{\phi}(w)\rtimes w'$ and $\eta(w)=\hat{\eta}(w)\rtimes w'$ for 
$w\in WD_F$, where $\hat{\phi}(w)$ and $\hat{\eta}(w)$ are contained in $\widehat{\U}_N$ and $\widehat{\G}_M$, 
respectively. Define the $L$-homomorphism
\[
\phi\ot\eta: WD_F\to{}^L(\U_N\,\x\,\G_M);\quad
w\mapsto (\hat{\phi}(w);\hat{\eta}(w))\rtimes w'
\]
which, after composing with $\iota$, gives rise to an $L$-homomorphism $\iota(\phi\ot\eta):WD_F\to{}^L(\G_N\,\x\,\G_M)$.
Suppose that $\pi$ (resp. $\tau$) is an irreducible representations of $\U_N(F)$ (resp. $\G_M(F)$) whose $L$-parameter is
$\phi$ (resp. $\eta$), and that $\psi$ is a non-trivial additive character of $F$. Then we have
\[
\gamma^{{\rm WD}}(s,\pi\x\tau,\psi)
=
\gamma(s,\iota(\phi\ot\eta), r,\psi).
\]
The definition of $\gamma^{{\rm WD}}(s,\pi\x\tau,\psi)$ is a little bit complicated; nevertheless, it can be described easily 
through the standard base change lift of $\pi$. More precisely, let $\phi_E$ denote the restriction of 
$\phi$ to $WD_E$, and $\eta_E$ denote the restriction of $\eta$ to $WD_E$, and then compose with the natural projection 
$\widehat{\G}_M\twoheadrightarrow\GL(\cV_M)$ onto its first component. In particular, $\phi_E$ (reps. $\eta_E$) becomes 
an $N$-dimensional (resp. $M$-dimensional) representation of $WD_E$, so that we can form their tensor product 
$\phi_E\ot\eta_E$. Then we have following lemma, which is not hard to verify.

\begin{lm}\label{L:L-isom}
We have
\[
{\rm Ind}_{WD_E}^{WD_F}(\phi_E\ot\eta_E)\cong r\circ\iota(\phi\ot\eta).
\]
\end{lm}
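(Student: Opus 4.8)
The plan is to verify the isomorphism by chasing through the definitions of the induced representation and the representation $r$, which are both explicit enough that the claim should reduce to a direct comparison of the underlying vector spaces together with the actions of $WD_F$. First I would describe $\mathrm{Ind}_{WD_E}^{WD_F}(\phi_E\ot\eta_E)$ concretely: since $[WD_F:WD_E]=2$, a choice of coset representatives $\{1,\sigma\}$ (with $\sigma$ any lift to $WD_F$ of the non-trivial element of $\mathrm{Gal}(E/F)$) identifies the induced space with $(\cV_N\ot\cV_M)\oplus(\cV_N\ot\cV_M)$, where $WD_E$ acts on the first summand through $\phi_E\ot\eta_E$, on the second summand through the $\sigma$-conjugate $(\phi_E\ot\eta_E)^\sigma$, and an element $w\in WD_F\setminus WD_E$ swaps the two summands (up to the appropriate twists). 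The key point is then to match this with the formulas defining $r$: for $w\in WD_E$ (so $w'=1$) one has $r\circ\iota(\phi\ot\eta)(w)=\mathrm{diag}(\hat\phi(w)\ot\hat\eta(w),\,\hat\phi(w)^\theta\ot\hat\eta(w)^\theta)$, which is block-diagonal, and for $w$ projecting to $\theta$ the matrix is anti-diagonal, exactly matching the swap.

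The main step is therefore to identify the $\sigma$-conjugate $(\phi_E\ot\eta_E)^\sigma$ with the "$\theta$-twisted" representation $w\mapsto\hat\phi(w)^\theta\ot\hat\eta(w)^\theta$ appearing in the lower-right block of $r\circ\iota(\phi\ot\eta)$. Here I would use that $\phi$ and $\eta$ are genuine $L$-homomorphisms: the relation $\phi(\sigma w\sigma^{-1})=\phi(\sigma)\phi(w)\phi(\sigma)^{-1}$ in ${}^L\U_N$, combined with the fact that conjugation by an element of ${}^L\U_N$ lying over $\theta\in\mathrm{Gal}(E/F)$ acts on $\widehat{\U}_N$ as (the pinned automorphism) $g\mapsto g^\theta$ up to an inner automorphism, shows that $\phi_E^\sigma\cong\phi_E\circ(\text{conj by }\sigma)$ is intertwined with $w\mapsto\hat\phi(w)^\theta$; the same argument applies to $\eta_E$ via $\widehat{\G}_M$. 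One then has to check that the intertwining operators for $\phi_E$ and $\eta_E$ can be chosen compatibly so that their tensor product furnishes the isomorphism on the lower block, and that on the upper block one simply takes the identity; finally one verifies that these two block isomorphisms are exchanged correctly by the off-diagonal action, which follows from the cocycle-type identity $\phi(\sigma)^\theta=\phi(\sigma^2)\phi(\sigma)^{-1}$ (and likewise for $\eta$) encoding that $\sigma^2\in WD_E$.

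The hard part will be bookkeeping the inner-automorphism ambiguity: the pinned automorphism $g\mapsto g^\theta$ agrees with conjugation by a lift of $\theta$ only up to an inner automorphism of $\widehat{\U}_N$ (resp. $\widehat{\G}_M$), so the naive block isomorphisms must be corrected by suitable elements of $\widehat{\U}_N$ and $\widehat{\G}_M$, and one must confirm these corrections are consistent on both blocks and compatible with the swap. This is exactly the point where the hypothesis that $\phi,\eta$ are bona fide $L$-homomorphisms (not just projective ones) is used, and it is the reason the lemma is stated as "not hard to verify" rather than "immediate." Once the identification of $(\phi_E\ot\eta_E)^\sigma$ is in hand, the rest is a routine matrix comparison against the two displayed formulas for $r$, and the isomorphism of Lemma \ref{L:L-isom} follows. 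When $E=F\,\x\,F$ the statement degenerates: $WD_E=WD_F$, the "induction" is the direct sum $(\phi\ot\eta)\oplus(\phi\ot\eta)^\theta$, and the identification is immediate from the block-diagonal formula for $r$, so no separate argument is needed there.
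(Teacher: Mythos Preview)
Your approach is correct and is exactly the direct verification the paper has in mind; the paper itself omits the proof entirely, merely remarking that the lemma ``is not hard to verify.'' One small simplification: you need not worry about an ``inner-automorphism ambiguity'' between the pinned involution $g\mapsto g^\theta$ and conjugation by a lift of $\theta$, since ${}^L\U_N=\widehat{\U}_N\rtimes\mathrm{Gal}(E/F)$ is defined precisely via the pinned automorphism, so conjugation by $1\rtimes\theta$ \emph{is} $g\mapsto g^\theta$; the only correction terms come from the $\widehat{\U}_N$- and $\widehat{\G}_M$-components $\hat\phi(\sigma)$, $\hat\eta(\sigma)$ of the chosen lift $\sigma$, and these furnish the intertwiners you describe without further ambiguity. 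Also note the lemma is only stated for $E$ a field, so your closing remark about $E=F\times F$ is unnecessary.
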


Now let $\pi$ be an irreducible generic representation of $\U_N(F)$ with the $L$-parameter $\phi$, 
${\rm BC}(\pi)$ be the standard base change of $\pi$, which is an irreducible representation of $\GL_N(E)$ with 
the $L$-parameter $\phi_E$, and $\tau$ be an irreducible generic representation of $\G_M(F)=\GL_M(E)$. 
Then the above lemma immediately yields 
\[
\gamma^{{\rm WD}}(s,\pi\x\tau,\psi)
=
\lambda_{E/F}(\psi)^{NM}\gamma^{{\rm WD}}(s,{\rm BC}(\pi)\x\tau,\psi_E)
\]
where $\lambda_{E/F}(\psi)$ is Langlands' $\lambda$-function (cf. \cite[Lemma 1.2]{JLbook}, \cite{Rohrlich1994}).

\subsubsection{$E=F\,\x\, F$}
Let us briefly explain the definition of $\gamma^{{\rm WD}}(s,\pi,\tau,\psi)$ when $E=F\,\x\,F$. In this setting, we have
$\U_N(F)=\GL_N(F)$ and $\G_N(F)=\GL_N(F)\x\GL_N(F)$, so that 
\[
{}^L\U_N=\widehat{\U}_N=\GL(\cV_N)
\quad\text{and}\quad
{}^L\G_N=\widehat{\G}_N=\GL(\cV_N)\x\GL(\cV_N).
\]
The embedding ${}^L\U_N\hookto{}^L\G_N$ is given by $g\mapsto(g, {}^tg^{-1})$, which induces an embedding 
\[
\iota:{}^L(\U_N\,\x\,\G_M)
=\GL(\cV_N)\x\GL(\cV_M)\x\GL(\cV_M)\hookto{}^L(\G_N\,\x\,\G_M)
=\GL(\cV_N)\x\GL(\cV_N)\x\GL(\cV_M)\x\GL(\cV_M).
\]
On the other hand, the representation 
\[
r:{}^L(\G_N\,\x\,\G_M)\to\GL(\cV_N\ot\cV_M\oplus\cV_N\ot\cV_M)
\]
in this case is simply defined by 
\[
r(g_1,g_2; h_1, h_2)=\pMX{g_1\ot h_1}{}{}{g_2\ot h_2}
\]
for $g_1,g_2\in\GL(\cV_N)$ and $h_1, h_2\in\GL(\cV_M)$.
Suppose that $\pi$ (resp. $\tau=\tau_1\boxtimes\tau_2$) is an irreducible representations of $\GL_N(F)$ (resp. 
$\GL_M(F)\x\GL_M(F)$) whose $L$-parameter is $\phi$ (resp. $\eta$). Then we put
\[
\gamma^{{\rm WD}}(s,\pi\x\tau,\psi)
=
\gamma(s,\iota(\phi\ot\eta), r,\psi).
\]
One checks easily that 
\[
\gamma^{{\rm WD}}(s,\pi\x\tau,\psi)
=
\gamma^{{\rm WD}}(s,\pi\x\tau_1,\psi)
\gamma^{{\rm WD}}(s,\tilde{\pi}\x\tau_2,\psi).
\]

\subsection{Gamma factor via Rankin-Selberg integrals}

\subsubsection{Unitary groups and embeddings}\label{SSS:embedding}
To define the Rankin-Selberg integrals, it will be more convenient to have an explicit realization for the unitary groups $\U_N$.
For this, let $n=\lfloor\frac{N}{2}\rfloor$, and $S_N\in\GL_N(F)$ be the matrix given by $S_N=J_N$ if $N$ is even, and 
\[
S_N
=
\begin{pmatrix}
&&J_n\\
&-2&\\
J_n
\end{pmatrix}
\]
if $N$ is odd, where for an integer $k\ge 1$, the matrix $J_k\in\GL_k(F)$ is defined inductively by 
\[
J_1=(1)
\quad\text{and}\quad
J_k
=
\begin{pmatrix}
&J_{k-1}\\
1&
\end{pmatrix}.
\]
Then $\U_N$ is realized as
\[
\U_N(F)
=
\stt{g\in\G_N(F)\mid {}^t\theta(g)S_N g=S_N}
\]
and we have two embeddings
\[
\jmath_{n,r}:\U_{2r}(F)\hookto\U_{2n+1}(F)\quad\text{and}\quad
\jmath^{n,r}:\U_{2n+1}(F)\hookto\U_{2r}(F)
\]
depending on $n\ge r$ and $n<r$, respectively. The first one is easier to describe, and is given by  
\[
\jmath_{n,r}\left(\pMX{a}{b}{c}{d}\right)
=
\begin{pmatrix}
a&&b\\
&I_{2(n-r)+1}\\
c&&d
\end{pmatrix}
\]
with $a,b,c,d\in{\rm Mat}_{r\x r}(K)$, where $K=E$ or $F$ according to $E$ is a field or not. Suppose that $n<r$ and put
\[
e={}^t(\underbrace{0,\cdots,0}_{n}, 1,\underbrace{0,\cdots, 0}_{2(r-n-1)},1,\underbrace{0,\cdots,0}_{n}).
\]
Then the subgroup of $\U_{2r}(F)$ consisting of stabilizers of $e$ is isomorphic to $\U_{2n+1}(F)$, and $\jmath^{n,r}$ is 
obtained via an isomorphism from $\U_{2n+1}(F)$ onto this subgroup. More specifically, if we put
\[
Q
=
\begin{pmatrix}
I_n&&\\
&1&&1\\
&&I_{2(r-n-1)}\\
&1&&-1\\
&&&&I_n
\end{pmatrix}\in\GL_{2r}(F)
\]
then 
\[
\jmath^{n,r}
\left(\pMX{a}{b}{c}{d}\right)
=
Q
\begin{pmatrix}
a&&&b\\
&1&&0\\
&&I_{2(r-n-1)}\\
c&0&&d
\end{pmatrix}
Q^{-1}
\]
with $a\in{\rm Mat}_{n\x n}(K)$ and $d\in{\rm Mat}_{n+1\x n+1}(K)$, where again, $K=E$ or $F$ depending on $E$ is 
a field or not. 

\subsubsection{Non-degenerate characters}
Let $G=\GL_N, \U_N$ or $\G_N$, and $\psi=\psi_F$ be a non-trivial additive character of $F$. Denote by $B_G\subset G$ 
the Borel subgroup consisting of upper triangular matrices, with the Levi decomposition $B_G=T_G\ltimes V_G$, where 
$T_G\subset B_G$ is the maximal tours whose elements are diagonal matrices. They can be viewed as algebraic groups 
defined over $F$; in particular, we have $V_{\G_N}(F)=V_{\GL_N}(E)$. Define the non-degenerate characters 
$\psi_{\GL_N}$, $\psi_{\U_{2n+1}}$ and $\psi_{\G_r}$ of $V_{\GL_N}(K)$, $V_{\U_{2n+1}}(F)$ and $V_{\G_r}(F)$, 
respectively, where $K=E$ or $F$ is a field, as follows. First, we define
\[
\psi_{\GL_N}(z)=\psi_K(z_{12}+z_{23}+\dots+z_{N-1,N}).
\]
Next assume that $E$ is a field, so that $\G_r(F)=\GL_r(E)$. Then we define
\[
\psi_{\U_{2n+1}}(z)
=
\psi_E(z_{12}+z_{23}+\dots+z_{n-1,n}+2^{-1}z_{n,n+1})
\quad\text{and}\quad
\psi_{\G_r}
=
\psi_{\GL_r}
\]
Finally, suppose that $E=F\,\x\,F$, so that $V_{\U_{2n+1}}(F)=V_{\GL_{2n+1}}(F)$ and 
$V_{\G_r}(F)=V_{\GL_r}(F)\,\x\,V_{\GL_r}(F)$. In this case, we define
\[
\psi_{\U_{2n+1}}(z)
=
\psi(z_{12}+z_{23}+\dots+z_{n-1,n}+2^{-1}z_{n,n+1}+z_{n+1,n+2}-z_{n+2,n+3}-\dots-z_{2n,2n+1})
\]
and
\[
\psi_{\G_r}(z, z')
=
\psi_{{\GL_r}}(z)\psi^{-1}_{{\GL_r}}(z').
\]
We emphasis that when $E=F\,\x\, F$, the characters $\psi_{\U_{2n+1}}$ and $\psi_{\GL_{2n+1}}$ are different, although the
groups $\U_{2n+1}(F)$ and $\GL_{2n+1}(F)$ are isomorphic. We hope this will not cause a serious confusion.
We also point out that these non-degenerate characters are unique up to conjugations of torus elements. 

\subsubsection{Induced representations of $\U_{2r}$}\label{SSS:rho_tau}
Here and after, we denote $a^*=J_N{}^t\theta(a)^{-1}J_N$ if $a\in\GL_N(E)$, and $a^*=J_N{}^ta^{-1}J_N$ if 
$a\in\GL_N(F)$. If $\eta$ is a representation of $\GL_N(E)$ or $\GL_N(F)$, then $\eta^*$ denotes the representation 
of $\GL_N(E)$ or $\GL_N(F)$ on the same space $\cV_\eta$ with the action $\eta^*(a)=\eta(a^*)$. Moreover, if $s$ is a 
complex number, then we write $\eta_s$ for $\eta\ot|\det|_K^{s-1/2}$, where $K=E$ or $F$. We understand that 
$\eta^*_s=\eta^*\ot|\det|^{s-1/2}_K$.\\

Let $Q_{2r}$ be the standard parabolic subgroup of $\U_{2r}$ whose Levi subgroup $M_{Q_{2r}}\cong\G_r$. 
Explicitly, we have 
\[
M_{Q_{2r}}(F)
=
\stt{m(a)=\pMX{a}{}{}{a^*}\mid a\in\G_r(F)}
\]
if $E$ is a filed, where $a^*=J_r{}^t\theta(a)^{-1}J_r$, and 
\[
M_{Q_{2r}}(F)
=
\stt{m(a,b)=\pMX{a}{}{}{b}\mid (a,b)\in\G_r(F)}
\]
if $E=F\,\x\,F$. Let $\tau$ be an irreducible generic representation of $\G_r(F)$. When $E=F\,\x\,F$, $\tau$ is of the form 
$\tau_1\boxtimes\tau_2$, where $\tau_1,\tau_2$ are irreducible generic representations of $\GL_r(F)$. In this case, we 
define $\tau_s=\tau_{1,s}\boxtimes\tau^*_{2,1-s}$. The induced representations appear in the definition of the Rankin-Selberg 
integrals are 
\[
\rho_{\tau,s}=I_{Q_{2r}}^{\U_{2r}}(\tau_s)={\rm Ind}_{Q_{2r}(F)}^{\U_{2r}(F)}(\tau_s).
\]
To define the Rankin-Selberg integrals, we actually need scalar-valued functions. For this, let $\lambda_{\tau,\psi^{-1}}$ be a 
non-zero Whittaker functional of $\tau$ with respect to the generic character $\psi^{-1}_{\G_r}$. Then for a given 
$\xi_s\in \cV_{Q_{2r}}^{\U_{2r}}(\tau_s)$, we put
\begin{equation}\label{E:f_xi}
f_{\xi_s}(h;a)=\lambda_{\tau,\psi^{-1}}(\tau(a)\xi_s(h))
\end{equation}
where $a\in\G_r(F)$ and $h\in\U_{2r}(F)$. The function $f_{\xi_s}:\U_{2r}(F)\,\x\,\G_r(F)\to\mathbb{C}$ is smooth, and for 
each fixed $h$ in $\U_{2r}(F)$, the assignment $a\mapsto f_{\xi_s}(h;a)$ gives rise to an element in 
$\cW(\tau,\psi^{-1}_{\G_r})$. 

\subsubsection{Intertwining maps}\label{SSS:intertwining map}
We introduce an intertwining map on $\cV_{Q_{2r}}^{\U_{2r}}(\tau_s)$, which is necessary in defining the gamma factors 
through the Rankin-Selberg integrals. To do so, let 
\[
w_{r,r}
=
\pMX{}{I_r}{I_r}{}\in\U_{2r}(F).
\]
Then we have the intertwining map 
$A(w_{r,r},\tau,s):\cV_{Q_{2r}}^{\U_{2r}}(\tau_s)\to \cV_{Q_{2r}}^{\U_{2r}}(\tau^*_{1-s})$ given by the integral
\[
A(w_{r,r},\tau,s)\xi_s(h)
=
|\Delta|_F^{\frac{r}{2}}\int_{N_{Q_{2r}}(F)}\xi_s(w_{r,r}^{-1}uh)du
\]
when $\Re(s)\gg 0$, and by the meromorphic continuation of the integral in general, where we define
$\tau^*=\tau_2^*\boxtimes\tau_1^*$ when $E=F\,\x\,F$. 
We in fact need a normalized version $A_{\psi,\delta}(w_{r,r},\tau,s)$ of $A(w_{r,r},\tau,s)$. 
By the works in \cite{Shahidi1981}, 
\cite{Shahidi1985}, \cite{Casselman1989} and \cite{CHM2000}, it can be defined to satisfy the following identity
\[
\int_{N_{Q_{2r}}(F)}f_{\xi_s}(w_{r,r}uh; d_r)\psi'^{-1}(u_{r,r+1})du
=
\int_{N_{Q_{2r}}(F)}f_{A_{\psi,\delta}(w_{r,r},\tau,s)\xi_s}(w_{r,r}uh; I'_r)\psi'^{-1}(u_{r,r+1})du.
\]
Here $d_r\in\G_r(F)$ is the matrix given inductively via the formulas
\begin{equation}\label{E:d_r}
I'_1=(1)
\quad\text{and}\quad
I'_r
=
\pMX{d_{r-1}}{}{}{(-1)^{r-1}}
\end{equation}
and $\psi'(u_{r, r+1})=\psi_E(\delta u_{r,r+1})$ if $E$ is a field, whereas 
$\psi'(u_{r,r+1})=\psi(2u_{r,r+1})$ if $E=F\,\x\, F$.\\

The Haar measure $du$ on $N_{Q_{2r}}(F)$ is defined as a product measure, where each root group is isomorphic to either 
the fields $E$ or $F$, and we take the self-dual measure on them with respect to $\psi_E$ or $\psi'$ accordingly. More 
precisely, to identify those root groups in $N_{Q_{2r}}(F)$ which are isomorphic to $F$ (there are $r$ of them) when $E$ is 
a field, we must fix an element $\delta\in E^\x$ with $\theta(\delta)=-\delta$; the factor $|\Delta|_F^{\frac{r}{2}}$ is included so 
that the integral is independent of the choice of $\delta$.

\subsubsection{Rankin-Selberg integrals: setups}
We introduce the setups in defining the Rankin-Selberg integral for $\U_{2n+1}\,\x\,\G_r$. Let $\pi$ and $\tau$ be irreducible 
generic representations of $\U_{2n+1}(F)$ and $\G_r(F)$, respectively. Let $\psi$ be a non-trivial additive character of $F$.
Fix a non-zero Whittaker functional $\lambda_{\pi,\psi}$ (resp. $\lambda_{\tau,\bar{\psi}}$) of $\pi$ (resp. $\tau$) with respect 
to the non-degenerate character $\psi_{\U_{2n+1}}$ (resp. $\bar{\psi}_{{\G_r}}$). We also need certain unipotent subgroups 
in $\U_{2n+1}$ or $\U_{2r}$ according to the sizes of $n$ and $r$. When $n\ge r$, such a unipotent subgroup is denoted by 
$\b{X}_{n,r}$; on the other hand, it will be denoted by $\b{X}^{n,r}$ if $n<r$.  More concretely, let's write $\ell=r-n-1$ when 
$n<r$. Then they are given by
\[
\b{X}_{n,r}(F)
=
\stt{
\begin{pmatrix}
I_r&&&&\\
A&I_{n-r}&&\\
&&1\\
&&&I_{n-r}&\\
&&&B&I_r
\end{pmatrix}
\in\U_{2n+1}(F)
\mid
A\in{\rm Mat}_{n-r,r}(K),\,B\in{\rm Mat}_{r,n-r}(K)
}
\]
and
\[
\b{X}^{n,r}(F)
=
\stt{
\bar{u}(A,B,C)
=
\begin{pmatrix}
I_{n+1}&&&\\
&I_{\ell}&&\\
A&B&I_{\ell}\\
&C&&I_{n+1}
\end{pmatrix}
\in\U_{2r}(F)
\mid
A\in{\rm Mat}_{\ell, n+1}(K),\, B\in{\rm Mat}_{\ell,\ell}(K),\, C\in{\rm Mat}_{n+1,\ell}(K)
}
\]
where $K=E$ or $F$ according to $E$ is a field or not. Notice that when $E$ is a field, $B=-J_r{}^t\theta(A) J_{n-r}$ in the 
definition of $\b{X}_{n,r}(F)$, while $C=-J_{\ell}{}^t\theta(A)J_{n+1}$ and $B\in{\rm Mat}_{\ell\x\ell}(E)$ is such that 
${}^t\theta(B)=-J_{\ell}BJ_{\ell}$ in the definition of $\b{X}^{n,r}(F)$. Therefore in this case, we may write 
$\b{u}(A,B,C)=\b{u}(A,B)$.
Define a character $\psi_{\b{X}^{n,r}}$ of $\b{X}^{n,r}(F)$ by 
\[
\psi_{\b{X}^{n,r}}(\b{u}(A,C))
=
\psi_E(A_{n+1,\ell})
\quad
\text{or}
\quad
\psi_{\b{X}^{n,r}}(\b{u}(A,B,C))
=
\psi(A_{n+1,\ell}-C_{11})
\]
depending on $E$ is a field or not.
\subsubsection{Rankin-Selberg integrals}\label{SSS:RS}
The Rankin-Selberg integral $\Psi_{n,r}(v\ot\xi_s)$ attached to $v\in\cV_\pi$ and $\xi_s\in I_r(\tau,s)$ is given by 
\begin{equation}\label{E:RS int n>r}
\Psi_{n,r}(v\ot\xi_s)
=
\int_{N_{\U_{2r}}(F)\backslash\U_{2r}(F)}\int_{\bar{X}_{n,r}(F)}
W_v(\b{u}\jmath_{n,r}(h))f_{\xi_s}(h; I_r)d\b{u}dh
\end{equation}
if $n\ge r$, and 
\begin{equation}\label{E:RS int n<r}
\Psi_{n,r}(v\ot\xi_s)
=
\int_{N_{\U_{2n+1}}(F)\backslash\U_{2n+1}(F)}\int_{\b{X}^{n,r}(F)}
W_v(g)f_{\xi_s}(\b{u}\jmath^{n,r}(g); I_r)\psi^{-1}_{\b{X}^{n,r}}(\b{u})d\b{u}dg
\end{equation}
if $n<r$.
By the results of \cite{Ben-ArtziSoudry2009} (for $n\ge r$) and \cite{MorimotoSoudry2020} (for $n<r$), these integral 
converge absolutely for $\Re(s)\gg 0$, and there exist $v$ and $\xi_s$ such that $\Psi_{n,r}(v\ot\xi_s)\equiv 1$ when $F$ is 
non-archimedean, and such that $\Psi_{n,r}(v\ot\xi_s)$ is holomorphic and is non-zero at a given $s_0$ when $F$ is 
archimedean. Moreover, except for $n<r$ and $F$ is archimedean, these integrals admit the meromorphic continuation to 
the whole complex plane. When $F$ is non-archimedean, they even give rise to elements in $\mathbb{C}(q^{-s})$.

\subsubsection{Functional equations and gamma factors}
Now we can describe the functional equations for the Rankin-Selberg integrals as well as the associated gamma factors.
Before we do so, notice that $\rho_{\tau,s}$ is irreducible except for countable many $s$. Assume first that $F$ is 
non-archimedean. Then the equivariant property (cf. \eqref{E:equiv n>r}, \eqref{E:equiv n<r}) and the fact that 
$\Psi_{n,r}(v\ot\xi_s)$ admits the meromorphic continuation to whole the complex plane imply that the Rankin-Selberg 
integrals give rise to elements in certain Hom-spaces (cf. \eqref{E:hom-space n>r}, \eqref{E:hom-space n<r}) (with $\rho$ 
being replaced with $\rho_{\tau,s}$), except for countable many $s$. Since the integrals
$\Psi_{n,r}(v\ot A_{\psi,\delta}(w_r,\tau,s)\xi_s)$ process the similar properties and the Hom-spaces are at most 
one-dimensional, we conclude that there exists a rational function $\Gamma(s,\pi\x\tau,\psi)$ in $q^{-s}$, which depends 
only on $\pi$, $\tau$ and $\psi$, such that 
\begin{equation}\label{E:FE}
\Psi_{n,r}(v\ot A_{\psi,\delta}(w_r,\tau,s)\xi_s)
=
\Gamma_\delta(s,\pi\x\tau,\psi)
\Psi_{n,r}(v\ot\xi_s)
\end{equation}
for every $v\in\cV_\pi$ and $\xi_s\in I_r(\tau,s)$. Notice that $\Gamma_\delta(s,\pi\x\tau,\psi)$ is non-zero, as we can aways 
find $v$ and $\xi_s$ such that $\Psi_{n,r}(v\ot\xi_s)$ is non-vanishing. We then define
\begin{equation}\label{E:RS gamma}
\gamma^{{\rm RS}}(s,\pi\x\tau,\psi)
=
\omega_{\pi}(-1)^r\omega_{\tau}(-1)^n\omega_\tau(\delta)^{-r}|\delta|_E^{-r\left(s-\frac{1}{2}\right)}\omega_{E/F}(-1)^{nr}\Gamma_\delta(s,\pi\x\tau,\psi)
\end{equation}
where $\omega_{E/F}$ is the quadratic character (possibly trivial) of $F^\x$ associated to $E/F$ by the local class field 
theory, and $\omega_\tau=\omega_{\tau_1}\omega_{\tau_2}$, 
$\omega_{\tau}(\delta)=\omega_{\tau_1}(1)\omega_{\tau_2}(-1)=\omega_{\tau_2}(-1)$ if $E=F\,\x\, F$.\\

Now, assume that $F$ is archimedean. The same arguments as above would give the gamma factor attached to $\pi$, 
$\tau$, and $\psi$, as defined via the Rankin-Selberg integrals in the archimedean case. However, unlike the 
non-archimedean case, we need to ensure the continuity of the Rankin-Selberg integrals on $\cV_\pi\widehat{\ot}I_r(\tau,s)$. 
When $n\ge r-1$, we can apply the results and their proofs of Soudry in \cite{Soudry1995} to deduce the desired continuity 
condition. More specifically, if $n\ge r$, then we can apply the results in \cite[Section 4]{Soudry1995} and the proofs in 
\cite[Section 5]{Soudry1995} to show that our Rankin-Selberg integrals admit meromorphic continuations, which give rise to 
elements in the desired Hom-spaces (cf. \eqref{E:hom-space n>r}). When $n=r-1$, the unipotent subgroup $\b{X}^{n,r}$ is 
trivial, as in the case when $n=r$. Consequently, the aforementioned results and proofs are still adaptable to the case 
$n=r-1$, and hence we can define the gamma factors $\gamma^{{\rm RS}}(s,\pi\x\tau,\psi)$ through \eqref{E:FE} and 
\eqref{E:RS gamma} when $n\ge r-1$.\\
 
In contrast, it seems that the results and proofs in \cite{Soudry1995} are not readily applicable to the present settings when 
$n<r-1$. More concretely, we do not have analogous identities to \cite[(6.10), (6.11)]{Soudry1995}, which establish a duality 
between the Rankin-Selberg integrals for $\SO_{2n+1}\,\x\,\GL_r$ and $\SO_{2n}\,\x\,\GL_r$. This duality allowed 
Soudry to adapt the results and proofs for ${\rm SO}_{2n+1}\,\x\,\GL_r$ to $\SO_{2n}\,\x\,\GL_r$. It's worth noting that 
both integrals for $\SO_{2n+1}\x\GL_r$ and $\SO_{2n}\x\GL_r$ belong to the Gelfand-Graev type. A critical reason behind 
the establishment of such a duality, at least from our perspective, is that (split) $\SO_2$ is abelian. However, as $\U_2$ is 
non-abelian, and more importantly, the Rankin-Selberg integrals for $\U_{2n}\,\x\,\GL_r$ fall under the category of 
Fourier-Jacobi type, we cannot expect that similar identities apply to our settings.
For a more in-depth understanding of the issue we've indicated, one could compare the identity \cite[(6.10)]{Soudry1995} with 
the one obtained in \cite[Theorem 7.2]{MorimotoSoudry2020}, even though the latter is only established for unramified 
representations (and, thus, in the context of non-archimedean local fields).
In conclusion, we cannot affirm whether these integrals possess a meromorphic continuation when $n<r-1$. Consequently, 
we are unable to define the gamma factors in this particular case.

\subsubsection{Remarks}
We end this section with a few remarks. First, our formulation of $\Psi_{n,r}(v\ot\xi_s)$ is slightly different from 
(but equivalent to) that of \cite{Ben-ArtziSoudry2009} and \cite{MorimotoSoudry2020} because we want to emphasis the 
resemblance between these integrals and those attached to generic representations of special orthogonal groups 
(cf. \cite{Soudry1993}). This is due to our intention to follow the arguments in op. cit. and \cite{Soudry2000}. 
Second, when $F$ is non-archimedean, the irreducible assumption can actually be relaxed, namely, the results for 
$\Psi_{n,r}(v\ot\xi_s)$ stated in \S\ref{SSS:RS} remain valid if we assume that (i) $\tau$ is of the form 
$\tau_1\boxtimes\tau_2$ for some representations $\tau_1,\tau_2$ of $\GL_r(F)$ when $E=F\,\x\, F$, and (ii) all the 
involved representations are of $Whittaker$ $type$. Here we call a representation of a quasi-split connected reductive group
over $F$ of Whittaker type if it admits a unique (up to scalar) Whittaker functional (with respect to some non-degenerate 
character). To prove the existence of the factor $\Gamma_\delta(s,\pi\x\tau,\psi)$ attached to these less restricted 
representations, we must show that the spaces \eqref{E:hom-space n>r} and \eqref{E:hom-space n<r}, with $\rho$ being 
replaced with $\rho_{\tau,s}$, are of one-dimensional except for countable many $s$. When $n<r$, this is proved by 
Morimoto-Soudry in \cite[Theorem 6.3]{MorimotoSoudry2020}; when $n\ge r$, this will be proved in the Appendix.

\section{Proof of \thmref{T:main}}\label{S:proof of main}

\subsection{Notaiton}\label{SS:notation}
For simplicity, we introduce the following notation.
Let $\tau_j$ be a representation of $\GL_{r_j}(K)$ for $j=1,\ldots, m$, where $K$ is a local 
field of characteristic zero. We denote by 
\[
I^{\GL_r(K)}(\tau_1,\cdots,\tau_m)
\]
the normalized induced representation of $\GL_{r_1+r_2+\dots+r_m}(K)$, which is induced from the representation 
$\tau_1\boxtimes\cdots\boxtimes\tau_m$ of the standard parabolic subgroup of 
$\GL_{r_1+r_2+\dots+r_m}(K)$, whose Levi subgroup is isomorphic to 
\[
\GL_{r_1}(K)\x\cdots\x\GL_{r_m}(K).
\]
Next, if $\pi$ is a representation of $\U_{2n+1}(F)$, and $\tau_j$ is a representations of 
$\G_{r_j}(F)$ for $j=1,\ldots, m$, with $\tau_j=\tau'_j\boxtimes\tau''_j$ when $E=F\,\x\, F$, where 
$\tau'_j$,  $\tau''_j$ are representations of $\GL_{r_j}(F)$, then
\[
I^{\U_{2(n+r)+1}(F)}(\tau_1,\dots,\tau_m;\pi)
\]
stands for the normalized induced representation of $\U_{2(n+r)+1}(F)$, which is induced from the 
representation $\tau_1\boxtimes\cdots\boxtimes\tau_m\boxtimes\pi_0$ of the standard parabolic subgroup of 
$\U_{2n+1}(F)$, whose Levi subgroup is isomorphic to 
\[
\G_{r_1}(F)\x\cdots\x\G_{r_m}(F)\x\U_{2n+1}(F)
\]
where $r=r_1+\cdots r_m$. When $E=F\,\x\, F$ so that $\U_{2n+1}(F)=\GL_{2n+1}(F)$ and 
$\G_{r_j}(F)=\GL_{r_j}(F)\,\x\,\GL_{r_j}(F)$ for $j=1,\dots, m$, we understand that  
\[
I^{\U_{2(n+r)+1}(F)}(\tau_1,\dots,\tau_m;\pi)
=
I^{\GL_{2(n+r)+1}}(\tau'_1,\dots,\tau'_m,\pi,\tau''_m,\dots,\tau''_1).
\]
We hope this will not cause a serious confusion.

\subsection{Preliminaries}
As usual, \thmref{T:main} and \thmref{T:main'} are proved by standard global arguments. For this, we record the following 
globalization result:

\begin{lm}\label{L:globalization}
Suppose that $F$ is non-archimedean and  $\pi$ is a given irreducible square-integrable generic representation 
of $\U_{2n+1}(F)$. Then there exist 
\begin{itemize}
\item a totally complex number field $\mathbb{F}$;
\item a quadratic extension field $\mathbb{E}$ of $\mathbb{F}$;
\item a finite place $v_0$ of $\mathbb{F}$;
\item a finite set $S$ of split spaces of $\bbF$ disjoint from $s_0$;
\item a quasi-split unitary group $\mathbb{U}_{2n+1}$ of $2n+1$ variables over $\mathbb{F}$ and;
\item an irreducible generic cuspidal automorphic representation $\mathit{\Pi}=\ot_v'\mathit{\Pi}_v$ of 
$\mathbb{U}_{2n+1}(\mathbb{A})$,
\end{itemize}
where $\mathbb{A}$ is the ring of adeles of $\mathbb{F}$, such that
\begin{itemize}
\item $\mathbb{F}_{v_0}\cong F$ and $\mathbb{E}_{v_0}\cong E$;
\item $\mathbb{U}_{2n+1}(\mathbb{F}_{v_0})\cong\U_{2n+1}(F)$;
\item $\mathit{\Pi}_{v_0}\cong \pi$ and;
\item $\mathit{\Pi}_v$ is unramified for every finite place $v\nin S\cup\stt{v_0}$.
\end{itemize}
\end{lm}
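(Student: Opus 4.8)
\textbf{Step 1: arithmetic data.} The plan is a standard globalization via the relative (Kuznetsov) trace formula. First I would produce $\mathbb{F}$, $\mathbb{E}$ and $v_0$ by an inverse localization. Writing $F=\Qp(\alpha)$ for a suitable $p$ and an algebraic number $\alpha$, set $\mathbb{F}_0=\Q(\alpha)$; after replacing $\mathbb{F}_0$ by a finite extension and adjoining $\sqrt{-d}$ for a large totally positive $d$, one obtains a totally imaginary number field $\mathbb{F}$ with a finite place $v_0$ such that $\mathbb{F}_{v_0}\cong F$. Next one globalizes the quadratic \etale algebra: if $E=F(\sqrt{c})$ is a field, choose $c_0\in\mathbb{F}^\times$ with $c_0/c$ a square in $F^\times$ (possible since the squares are open in $F^\times$ and $\mathbb{F}$ is dense in $\mathbb{F}_{v_0}$) and put $\mathbb{E}=\mathbb{F}(\sqrt{c_0})$, so that $\mathbb{E}_{v_0}\cong E$; if $E=F\times F$, take any quadratic $\mathbb{E}/\mathbb{F}$ in which $v_0$ splits. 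Every archimedean place of $\mathbb{F}$ is complex, hence splits in $\mathbb{E}$. Since $2n+1$ is odd there is, up to isomorphism, a unique quasi-split unitary group $\mathbb{U}_{2n+1}$ over $\mathbb{F}$ attached to $\mathbb{E}/\mathbb{F}$; its localization at $v_0$ is the given $\U_{2n+1}$ over $F$, and at each archimedean place $w$ it becomes the split group $\GL_{2n+1}(\C)$. Finally fix a finite set $S$ of finite places, disjoint from $v_0$ and from the ramification locus of $\mathbb{E}/\mathbb{F}$, each split in $\mathbb{E}$ (so that $\mathbb{U}_{2n+1}(\mathbb{F}_v)\cong\GL_{2n+1}(\mathbb{F}_v)$ there), and fix a global non-degenerate character of $V_{\mathbb{U}_{2n+1}}(\mathbb{A})$ that is trivial on $V_{\mathbb{U}_{2n+1}}(\mathbb{F})$ and whose $v_0$-component is $\psi_{\U_{2n+1}}$ (after adjusting the global additive character of $\mathbb{A}/\mathbb{F}$).

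\textbf{Step 2: the trace formula.} I would then run the Whittaker--Whittaker relative trace formula for $\mathbb{U}_{2n+1}$ against a decomposable test function $f=\bigotimes_v f_v$ chosen as follows: $f_{v_0}$ is a pseudo-coefficient of the square-integrable representation $\pi$; at one place $v_1\in S$, $f_{v_1}$ is a matrix coefficient of a fixed supercuspidal representation of $\GL_{2n+1}(\mathbb{F}_{v_1})$; at every finite place $v\notin S\cup\{v_0\}$, $f_v$ is the unit of the unramified Hecke algebra; at the archimedean places, $f_w$ is a fixed bi-$K_w$-finite function of small support; and the remaining $f_v$ ($v\in S\setminus\{v_1\}$) are chosen so that the relevant geometric (non-degenerate) orbital integral does not vanish. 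The supercuspidal component at $v_1$ puts the geometric side in its simple form and annihilates every automorphic representation that is not cuspidal (a non-cuspidal automorphic representation has no supercuspidal local component); the pseudo-coefficient at $v_0$ restricts the remaining spectral sum to cuspidal $\mathit{\Pi}$ with $\mathit{\Pi}_{v_0}\cong\pi$; and the unit Hecke functions force $\mathit{\Pi}_v$ to be unramified for all finite $v\notin S\cup\{v_0\}$. Choosing the auxiliary data so that the geometric side is visibly nonzero then forces the spectral side to be nonzero, whence there is an irreducible cuspidal automorphic representation $\mathit{\Pi}=\bigotimes'_v\mathit{\Pi}_v$ with a nonvanishing Whittaker--Fourier coefficient---hence globally generic---such that $\mathit{\Pi}_{v_0}\cong\pi$ and $\mathit{\Pi}_v$ is unramified for every finite $v\notin S\cup\{v_0\}$. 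Together with $\mathbb{F}$, $\mathbb{E}$, $v_0$, $S$ and $\mathbb{U}_{2n+1}$, this is the required data.

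\textbf{Main obstacle.} The only non-formal point is the local input at $v_0$: one needs the contribution of $f_{v_0}$ to the spectral side of the relative trace formula to be nonzero precisely for, and in particular nonzero \emph{at}, the local component $\pi$. For square-integrable $\pi$ this rests on the existence of a pseudo-coefficient isolating $\pi$ within the tempered spectrum, together with the nonvanishing of the corresponding local spectral term, which holds because $\pi$ is both generic and square-integrable. Convergence and the simple form of the trace formula are guaranteed by the supercuspidal place $v_1$, and the archimedean harmonic analysis is routine since $\mathbb{U}_{2n+1}(\mathbb{F}_w)\cong\GL_{2n+1}(\C)$ is split. Alternatively, one may simply cite one of the several globalization lemmas already established in the functoriality literature for classical groups, of which the statement above is a routine adaptation to quasi-split unitary groups of odd rank.
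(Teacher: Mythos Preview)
Your approach via the Whittaker--Whittaker relative trace formula is a legitimate and genuinely different route from the paper's. The paper proceeds instead by functoriality: for supercuspidal $\pi$ it simply quotes Shahidi's globalization (\cite[Proposition~5.1]{Shahidi1990}), and for general square-integrable $\pi$ it first constructs, following Gan--Ichino \cite[\S6.4]{GanIchino2016} (using Shin's automorphic Plancherel density theorem and Mok's endoscopic classification), an isobaric automorphic representation $\Sigma=\Sigma_1\boxplus\cdots\boxplus\Sigma_m$ on $\GL_{2n+1}(\mathbb{A}_{\mathbb{E}})$ whose component at $v_0$ is ${\rm BC}(\pi)$, whose Asai $L$-functions have the required poles, and which is unramified outside $S\cup\{v_0\}$; the Ginzburg--Rallis--Soudry descent of $\Sigma$ to $\mathbb{U}_{2n+1}$ is then the desired generic cuspidal $\mathit{\Pi}$.

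What each buys: your argument is more direct and avoids the heavy machinery of endoscopic classification and automorphic descent. Conversely, the paper's detour through $\GL$ sidesteps exactly the point you flag as the main obstacle---that a pseudo-coefficient of $\pi$ isolates $\pi$ only within the \emph{tempered} spectrum, so a non-tempered generic local component could in principle contribute to your spectral side---since on the $\GL$ side one globalizes the cuspidal constituents of ${\rm BC}(\pi)$ (for which Shahidi-type supercuspidal globalization applies cleanly), and the descent construction then automatically yields a generic cuspidal representation with the correct local component at $v_0$. Your outline is sound provided this isolation issue is handled (as it is in several places in the literature, e.g.\ via positivity of the Kuznetsov spectral side plus a separating argument), but the paper's route simply bypasses it.
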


\begin{proof}
When $\pi$ is supercuspidal, this follows from a well-known result of Shahidi (\cite[Proposition 5.1]{Shahidi1990}). 
In general, we apply the construction of Gan-Ichino in \cite[Section 6.4]{GanIchino2016} as well as the result of global 
descent of Ginzburg-Rallis-Soudry in \cite{GRS2011} to obtain the desired representation. First, we choose a totally 
complex number field $\bbF$, together with a place $v_0$ of $\bbF$ such that $\bbF_{v_0}\cong F$. Next, we choose a 
quadratic extension $\bbE$ of $\bbF$, such that $\bbE_{v_0}\cong E$ and such that $\bbE$ is unramified over $\bbF$
at all finite places of $\bbF$ outside $v_0$. Let $\bbU_{2n+1}$ be the unitary group over $\bbF$ defined by the matrix 
$S_{2n+1}$ given in \S\ref{SSS:embedding}, so that we have $\bbU_{2n+1}(\bbF_{v_0})\cong\U_{2n+1}(F)$\\

Fix a finite set $S$ of $\bbF$ consisting of split places such that $v_0\nin S$. Then as in \cite[Section 6.4]{GanIchino2016}, 
one can construct an isobaric sum
\[
\Sigma=\Sigma_1\boxplus\Sigma_2\boxplus\cdots\boxplus\Sigma_m
\]
of irreducible cuspidal automorphic representations $\Sigma_i$ of $\GL_{r_i}(\bbA_\bbE)$ ($1\le i\le m$), where 
$\bbA_\bbE$ is the ring of adeles of $\bbE$, such that 
\begin{itemize}
\item
$r_1+r_2+\cdots+r_m=2n+1$;
\item
$\Sigma_i\ncong\Sigma_j$ for $1\le i\neq j\le m$;
\item
the Asai $L$-function $L(s,\Sigma_i,{\rm As})$ has a pole at $s=1$ for all $i$;
\item
$\Sigma_{v_0}\cong {\rm BC}(\pi)$;
\item
$\Sigma_v$ is unramified for every finite place $v$ of $\bbF$ that is not in $S\cup\stt{v_0}$.
\end{itemize}
We indicate that their construction relies on the results of Shin (\cite{SWShin2012}) and Mok (\cite{Mok2015}). Also, in 
their construction, they require that the $L$-parameter of $\pi$, when restricting to $WD_E$, contains at least two 
irreducible direct summands, as they also have some restrictions for $\Sigma_v$ for $v\in S$. Here, since we don't have 
conditions for $\Sigma_v$ when $v\in S$, the assumption on $\pi$ can be removed. Nevertheless, we do need $S$ to be 
non-empty if $\pi$ is not supercuspidal in order to apply the result of Shin.\\

Now, the first three properties of $\Sigma$ meets the require conditions for global descent theorem of \cite{GRS2011}, 
and from $\Sigma$, the global descent gives an irreducible generic cuspidal automorphic representation $\mathit{\Pi}$ of 
$\bbU_{2n+1}(\bbA)$, such that ${\rm BC}(\mathit{\Pi}_v)\cong\Sigma_v$ for every place $v$ of $\bbF$. 
This is exactly what we want.
\end{proof}

\subsection{Proof of \thmref{T:main}}
Notice that \thmref{T:main} holds when (i) $F$ is archimedean; (ii) $E=F\,\x\, F$ and 
(iii) $F$ is non-archimedean and $\pi, \tau$ are unramfied. In fact, by the works of Shelstad (\cite{Shelstad1982}, 
\cite{Shelstad2012}), Mezo (\cite{Mezo2016}) and the recent work of Adrian-Henniart-Kaplan-Oi 
(\cite[cf. Remark B3]{AHKO}), the local Langlands correspondence established by Mok in \cite{Mok2015} coincide with the 
ones for unramified representations and for real algebraic groups (\cite{Langlands1989}). Combining these with the works of 
Shahidi in \cite{Shahidi1985} and \cite{Shahidi1990}, the claim follows. In view of above, we may assume that $F$ is 
non-archimedean and $E$ is a field. Let ${\rm BC}(\pi)$ stands for the (standard) base change lift of $\pi$, which is an 
irreducible representation of $\G_{2n+1}(F)=\GL_{2n+1}(E)$. By \lmref{L:L-isom}, we have to show that 
\begin{equation}\label{E:main id}
\gamma^{{\rm LS}}(s,\pi\x\tau,\psi)
=
\lambda_{E/F}(\psi)^{(2n+1)r}\gamma^{{\rm WD}}(s,{\rm BC}(\pi)\x\tau,\psi_E).
\end{equation}
By the multiplicativity of both gamma factors with respect to $\tau$, it suffices to prove \eqref{E:main id} when $\tau$
supercuspidal. We shall indicate that the proof is similar to the ones in the literature 
 (cf. \cite{JiangSoudry2004}, \cite{CKPSS2004}, \cite{KK2005}), which proved similar identities for other classical groups.\\

Let's begin with the case where $\pi$ is $square$-$integrable$. Let $(\bbF,\bbE,\bbU_{2n+1},\mathit{\Pi})$ be as in 
\lmref{L:globalization} and ${\rm BC}(\mathit{\Pi})=\ot'_v{\rm BC}(\mathit{\Pi})_v$ be the global base change lift of 
$\mathit{\Pi}$. Then ${\rm BC}(\mathit{\Pi})$ is an isobaric irreducible generic automorphic representation of 
$\G_{2n+1}(\bbA)$ (\cite{CPSS2011}, \cite{Mok2015}) such that 
\[
{\rm BC}(\mathit{\Pi})_v\cong{\rm BC}(\mathit{\Pi}_v)
\]
for every place $v$ of $\bbF$. Since $\tau$ is supercuspidal, a result of Shahidi (\cite[Proposition 5.1]{Shahidi1990}) says 
that there exists an irreducible cuspidal automorphic representation $\mathit{\Sigma}=\ot'_v\mathit{\Sigma}_v$ of 
$\G_r(\bbA)$ so that $\mathit{\Sigma}_v$ is unramified for each finite $v\ne v_0$, and at $v=v_0$, one has 
$\mathit{\Sigma}_{v_0}\cong\tau$. Let $\Psi=\prod_v\Psi_v:\bbF\backslash\bbA\to\mathbb{C}^\x$ be a non-trivial additive 
character such that $\Psi_{v_0}=\psi$. Let $T$ be a finite set of places of $\bbF$ containing $v_0\cup S$, all archimedean 
places and all finite places $v$ such that $\Psi_v$ is not unramified. Notices that $\mathit{\Pi}_v$ is unramified for every 
finite place $v\in T\setminus S\cup\stt{v_0}$.\\

By results of Shahidi (\cite{Shahidi1990}) and Jacquet--Piatetski-Shapiro--Shalika 
(\cite{JPSS1983}), the products
\[
L^{{\rm LS}}_{T}(s,\mathit{\Pi}\x\mathit{\Sigma})
:=
\prod_{v\nin T}
L^{{\rm LS}}(s,\mathit{\Pi}_v\x\mathit{\Sigma}_v)
\quad
\text{and}
\quad
L^{{\rm WD}}_T(s,{\rm BC}(\mathit{\Pi})\x\mathit{\Sigma})
:=
\prod_{v\nin T}
L^{{\rm WD}}(s,{\rm BC}(\mathit{\Pi})_v\x\mathit{\Sigma}_v)
\]
which converge for $\Re(s)\gg 0$, admit meromorphic continuations to $\bbC$, and satisfy the following functional equations
\begin{equation}\label{E:Shahidi FE}
L^{{\rm LS}}_T(s,\mathit{\Pi}\x\mathit{\Sigma})
=
\prod_{v\in T}
\gamma^{{\rm LS}}(s,\mathit{\Pi}_v\x\mathit{\Sigma}_v,\Psi_v)\cdot
L^{{\rm LS}}_T(1-s,\tilde{\mathit{\Pi}}\x\tilde{\mathit{\Sigma}})
\end{equation}
and
\[
L^{{\rm WD}}_T(s,{\rm BC}(\mathit{\Pi})\x\mathit{\Sigma})
=
\prod_{v\in T}
\gamma^{{\rm WD}}(s,{\rm BC}(\mathit{\Pi})_v\x\mathit{\Sigma}_v,\Psi_v)\cdot
L^{{\rm WD}}_S(1-s,\widetilde{{\rm BC}(\mathit{\Pi})}\x\tilde{\mathit{\Sigma}})
\]
where $\tilde{\mathit{\Pi}}$ and $\tilde{\mathit{\Sigma}}$ stand for the contragredients of $\mathit{\Pi}$ and $\mathit{\Sigma}$, 
respectively. Since 
\[
L^{{\rm LS}}(s,\mathit{\Pi}_v\x\mathit{\Sigma}_v)
=
L^{{\rm WD}}(s,{\rm BC}(\mathit{\Pi}_v)\x\mathit{\Sigma}_v)
=
L^{{\rm WD}}(s,{\rm BC}(\mathit{\Pi})_v\x\mathit{\Sigma}_v)
\]
for all $v\nin T$, and similar identities hold for the contragredient representations, we get that 
\[
\prod_{v\in T}\gamma^{{\rm LS}}(s,\mathit{\Pi}_v\x\mathit{\Sigma}_v,\Psi_v)
=
\prod_{v\in T}\gamma^{{\rm WD}}(s,{\rm BC}(\mathit{\Pi})_v\x\mathit{\Sigma}_v,\Psi_v).
\]
As \eqref{E:main id} holds for every $v\in T$ with $v\ne v_0$, and $\prod_{v\in T}\lambda_{\bbE_v/\bbF_v}(\Psi_v)=1$, we 
immediately obtain
\begin{align*}
\gamma^{{\rm LS}}(s,\pi\x\tau,\psi)
&=
\gamma^{{\rm LS}}(s,\mathit{\Pi}_{v_0}\x\mathit{\Sigma}_{v_0},\Psi_{v_0})\\
&=
\lambda_{\bbE_{v_0}/\bbF_{v_0}}(\Psi_{v_0})^{(2n+1)r}
\gamma^{{\rm WD}}(s,{\rm BC}(\mathit{\Pi}_{v_0})\x\mathit{\Sigma}_{v_0},(\Psi_{v_0})_{\bbE_{v_0}})\\
&=
\lambda_{E/F}(\psi)^{(2n+1)r}
\gamma^{{\rm WD}}(s,{\rm BC}(\pi)\x\tau,\psi_E).
\end{align*}
This proves \eqref{E:main id} when $\pi$ is square-integrable.\\ 

Next, we consider the case when $\pi$ is $tempered$, but not square-integrable. Let $\phi$ be its corresponding $L$-parameter.
Then the $L$-parameter of ${\rm BC}(\pi)$ is simply $\phi_E:=\phi|_{WD_E}$, which has the following decomposition
\[
\phi_E
=
\phi_1\oplus\dots\oplus\phi_k\oplus\phi_{0,E}\oplus\phi_k^*\oplus\dots\oplus\phi_1^*
\]
as representations of $WD_E$, where for $1\le j\le k$
\begin{itemize}
\item
$\phi_j$ is a $r_j$-dimensional irreducible representation of $WD_E$;
\item
$\phi^*_j$ is the conjugate-dual of $\phi_j$;
\item
$\phi_0$ is a square-integrable $L$-parameter of $\U_{2n_0+1}(F)$ with $n_0:=n-r_1-\dots-r_k\ge 0$ and;
\item
$\phi_{0,E}=\phi_0|_{WD_E}$.
\end{itemize}
Let $\tau_j$ be the square-integrable irreducible representation of $\GL_{r_j}(E)$ attached to $\phi_j$ for $j=1,2,\ldots, k$, 
$\Pi_{\phi_0}$ be the $L$-packet attached to $\phi_0$ and $\pi_0\in\Pi_{\phi_0}$ be the unique generic member. Then $\pi$
is the unique generic summand of 
\[
I^{\U_{2n+1}(F)}(\tau_1,\cdots,\tau_k;\pi_0).
\]
On the other hand, since $\phi_E$ is the $L$-parameter of ${\rm BC}(\pi)$, we see that 
\[
{\rm BC}(\pi)
=
I^{\GL_{2n+1}(E)}(\tau_1,\cdots,\tau_k,{\rm BC}(\pi_0),\tau^*_k,\cdots,\tau^*_1)
\]
where $\tau_j^*$ is the conjugate-dual of $\tau_j$ for $j=1,2,\dots, k$.
By the multiplicativity of the gamma factors and the result for square-integrable representations, we find that 
\begin{align}\label{E:mult-id}
\begin{split}
\gamma^{{\rm LS}}(s,\pi\x\tau,\psi)
=&
\gamma^{{\rm LS}}(s,\pi_0\x\tau,\psi)
\prod_{j=1}^k\gamma^{{\rm LS}}(s,\tau_j\x\tau,\psi)\gamma^{{\rm LS}}(s,\tau^*_j\x\tau,\psi)\\
=&
\lambda_{E/F}(\psi)^{(2n_0+1)r}\gamma^{{\rm WD}}(s,{\rm BC}(\pi_0)\x\tau,\psi_E)\\
&\x\prod_{j=1}^k
\lambda_{E/F}(\psi)^{rr_j}\gamma^{{\rm WD}}(s,\tau_j\x\tau,\psi_E)
\lambda_{E/F}(\psi)^{rr_j}\gamma^{{\rm WD}}(s,\tau^*_j\x\tau,\psi_E)\\
=&
\lambda_{E/F}(\psi)^{(2n+1)r}\gamma^{{\rm WD}}(s,{\rm BC}(\pi)\x\tau,\psi_E).
\end{split}
\end{align}
This proves \eqref{E:main id} when $\pi$ is tempered.\\

Finally, assume that $\pi$ is $non$-$tempered$. The proof of which is similar to the tempered case. Again, let $\phi$ be the 
$L$-parameter associated to $\pi$, and $\phi_E$ denotes its restriction to $WD_E$, which gives rise to the $L$-parameter 
of ${\rm BC}(\pi)$, and admits the following decomposition
\[
\phi_E
=
\phi_1\oplus\dots\oplus\phi_k\oplus\phi_{0,E}\oplus\phi_k^*\oplus\dots\oplus\phi_1^*
\]
as representations of $WD_E$, where for $1\le j\le k$
\begin{itemize}
\item
$\phi_j=\phi'_j\ot |\cdot|_E^{e_j}$ is a $r_j$-dimensional representation of $WD_E$ for some tempered $\phi'_j$ and real 
number $e_j$ such that 
$
e_1>\cdots>e_k>0;
$
\item
$\phi^*_j$ is the conjugate-dual of $\phi_j$;
\item
$\phi_0$ is a tempered $L$-parameter of $\U_{2n_0+1}(F)$ with $n_0:=n-r_1-\dots-r_k\ge 0$ and;
\item
$\phi_{0,E}=\phi_0|_{WD_E}$.
\end{itemize}
Let $\tau_j$ be the essentially tempered irreducible representation of $\GL_{r_j}(E)$ attached to $\phi_j$ for $j=1,2,\ldots, k$, 
$\Pi_{\phi_0}$ be the $L$-packet attached to $\phi_0$ and $\pi_0\in\Pi_{\phi_0}$ be the unique generic member. Then we 
have
\[
\pi=I^{\U_{2n+1}(F)}(\tau_1,\cdots,\tau_k;\pi_0)
\quad\text{and}\quad
{\rm BC}(\pi)
=
I^{\GL_{2n+1}(E)}(\tau_1,\cdots,\tau_k,{\rm BC}(\pi_0),\tau^*_k,\cdots,\tau^*_1).
\]
Now, exactly the same argument as \eqref{E:mult-id} gives \eqref{E:main id} when $\pi$ is non-tempered.
This completes the proof of \thmref{T:main}.\qed

\section{Proof of \thmref{T:main'}}\label{S:proof of main'}

In this section, we prove \thmref{T:main'} under the following assumptions, whose proofs will occur in the subsequent 
sections. We shall follow the notation in \S\ref{SS:notation}.

\begin{assumption}\label{SS:assumption}\noindent
\begin{itemize}
\item
\thmref{T:main''} and \thmref{T:main'''} hold;
\item
\thmref{T:main'} holds when $n=0$ and $r=1$;
\item
\thmref{T:main'} holds when $F$ is archimedean, $E=F\,\x\, F$ and $n\ge r-1$.
\end{itemize}
\end{assumption}

\subsection{Preliminaries}
As an immediately corollary of our assumptions, we see that \thmref{T:main'} holds for unramified representations, which
we record in the following lemma. 

\begin{lm}\label{L:unramified case}
Suppose that $F$ is non-archimedean and both $\pi$ and $\tau$ are unramified. Then under the assumptions 
\S\ref{SS:assumption}, \thmref{T:main'} holds.
\end{lm}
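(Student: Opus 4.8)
The plan is to reduce, using the multiplicativity statements \thmref{T:main''} and \thmref{T:main'''} and the minimal case $n=0$, $r=1$ of \thmref{T:main'} — all granted in \S\ref{SS:assumption} — together with \thmref{T:main} (now available for all irreducible generic $\pi,\tau$), to a term-by-term comparison of two identical factorizations. Since $\pi$ is unramified and generic, its Langlands standard module carries a spherical vector, hence so does each of the discrete-series representations entering its Levi datum; as the only spherical essentially square-integrable representations of $\GL_m(E)$ occur for $m=1$, and a non-compact quasi-split group has no spherical discrete series, this standard module must have the shape
\[
I^{\U_{2n+1}(F)}(\chi_1,\dots,\chi_n;\pi_0),
\]
where $\chi_1,\dots,\chi_n$ are unramified characters of $\G_1(F)$ and $\pi_0$ is an unramified character of $\U_1(F)$. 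By the standard module theorem for quasi-split unitary groups, $\pi$ is the generic quotient of this induced representation, and, after reordering the $\chi_j$ so that the inducing data lie in the closed positive Weyl chamber, transitivity shows $\pi$ is the generic quotient of $I^{\U_{2n+1}(F)}(\chi_1;\pi^{(1)})$ with $\pi^{(1)}$ the generic quotient of $I^{\U_{2n-1}(F)}(\chi_2;\pi^{(2)})$, and so on down to $\pi^{(n)}=\pi_0$. In the same way $\tau$ is the generic quotient of $I^{\G_r(F)}(\mu_1,\dots,\mu_r)$ for unramified characters $\mu_1,\dots,\mu_r$ of $\G_1(F)$.

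Granting this, I would first apply \thmref{T:main''} a total of $n$ times, peeling off the $\chi_j$ one at a time, to obtain
\[
\gamma^{{\rm RS}}(s,\pi\x\tau,\psi)
=
\gamma^{{\rm RS}}(s,\pi_0\x\tau,\psi)\,\prod_{j=1}^{n}\gamma^{{\rm WD}}(s,\chi_j\x\tau_1,\psi)\,\gamma^{{\rm WD}}(s,\tilde{\chi}_j\x\tau_2,\psi),
\]
where $\tau_1=\tau_2=\tau$ when $E$ is a field; then apply \thmref{T:main'''} a total of $r-1$ times to the first factor to get $\gamma^{{\rm RS}}(s,\pi_0\x\tau,\psi)=\prod_{i=1}^{r}\gamma^{{\rm RS}}(s,\pi_0\x\mu_i,\psi)$. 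Each remaining factor now has $n_0=0$ and $r=1$, so the minimal case of \thmref{T:main'} gives $\gamma^{{\rm RS}}(s,\pi_0\x\mu_i,\psi)=\gamma^{{\rm LS}}(s,\pi_0\x\mu_i,\psi)$, which by \thmref{T:main} equals $\gamma^{{\rm WD}}(s,\pi_0\x\mu_i,\psi)$. On the Langlands--Shahidi side, the standard multiplicativity of the Langlands--Shahidi $\gamma$-factors (\cite{Shahidi1990}), applied to the very same inductions and combined with \thmref{T:main} and the known coincidence of the Langlands--Shahidi and Weil--Deligne $\gamma$-factors for $\GL\x\GL$, produces the identical factorization of $\gamma^{{\rm LS}}(s,\pi\x\tau,\psi)$: the same Weil--Deligne GL-factors, and $\gamma^{{\rm LS}}(s,\pi_0\x\mu_i,\psi)$ in place of $\gamma^{{\rm RS}}(s,\pi_0\x\mu_i,\psi)$. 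Comparing the two factorizations term by term yields $\gamma^{{\rm RS}}(s,\pi\x\tau,\psi)=\gamma^{{\rm LS}}(s,\pi\x\tau,\psi)$.

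I expect the real difficulty to be bookkeeping rather than conceptual. One must keep careful track of the powers of Langlands' $\lambda$-function, and of the normalizing constants $\omega_\pi$, $\omega_\tau$, $\omega_{E/F}$ of \eqref{E:RS gamma}, so that the $\G_1(F)\x\G_r$ factors produced by \thmref{T:main''} match exactly those appearing on the Langlands--Shahidi side — this is the same type of computation carried out in \eqref{E:mult-id}. One should also be a little careful that ``generic quotient'' behaves transitively across the degenerate (unitary) part of the principal series, which is where the standard module theorem is used; this point can alternatively be sidestepped by first treating Zariski-generic Satake parameters, for which all the relevant induced representations are irreducible and the reduction is immediate, and then extending by the rationality of both $\gamma$-factors in the Satake parameter. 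Up to these points, the argument is the familiar reduction used for the analogous problems for other classical groups (cf. \cite{JiangSoudry2004}, \cite{CKPSS2004}, \cite{KK2005}).
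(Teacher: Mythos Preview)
Your proposal is correct and is precisely the natural unfolding of what the paper means by calling this lemma an ``immediate corollary'' of Assumption~\ref{SS:assumption} (the paper gives no explicit proof). The reduction via repeated application of \thmref{T:main''} and \thmref{T:main'''} to the minimal case $n=0$, $r=1$, together with Shahidi's multiplicativity on the Langlands--Shahidi side and \thmref{T:main}, is exactly the intended argument; your remarks on transitivity of generic quotients and the alternative via Zariski-generic Satake parameters are apt safeguards but not strictly necessary here.
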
 

The next lemma is crucial to proving \thmref{T:main'} when $n<r$. It allows us to overcome the challenges posed by
incomplete results in the archimedean case.

\begin{lm}\label{L:sqr quot}
Let $\pi$ be an irreducible generic supercuspidal representation of $\U_{2n+1}(F)$ (in particular, $F$ is non-archimedean),
and  $r>n+1$ be an integer. Then there exist an integer $k>0$ and an irreducible generic representation $\sigma$ of 
$\G_k(F)$ such that $k+n\ge r-1$ and the generic constituent of the normalized induced representation 
$I^{\U_{2(n+k)+1}(F)}(\sigma;\pi)$ of $\U_{2(n+k)+1}(F)$ is a quotient and square-integrable.
\end{lm}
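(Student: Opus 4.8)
The plan is to realise $\pi$ as the cuspidal part of the supercuspidal support of a square-integrable representation of a larger unitary group, and to exhibit that representation as a quotient of the required induced module. When $E=F\,\x\,F$ the claim is a short computation with Zelevinsky segments: viewing $\pi$ as a supercuspidal of $\GL_{2n+1}(F)$ and choosing $t\ge 1$ with $(2n+1)t+n\ge r-1$, take $k=(2n+1)t$ and $\sigma=\sigma'\boxtimes\sigma''$, where $\sigma'$ and $\sigma''$ are the generalized Steinberg representations of $\GL_{(2n+1)t}(F)$ attached to the segments $[\pi|\det|^{-t},\pi|\det|^{-1}]$ and $[\pi|\det|,\pi|\det|^{t}]$; since these, together with $[\pi]$, are consecutive subsegments of $[\pi|\det|^{-t},\pi|\det|^{t}]$, the generalized Steinberg $\mathrm{St}(\pi,2t+1)$ of the merged segment --- square-integrable modulo the centre --- is the unique irreducible, hence the generic, quotient of $I^{\U_{2(n+k)+1}(F)}(\sigma;\pi)={\rm Ind}(\sigma'\x\pi\x\sigma'')$. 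So assume henceforth that $E$ is a field. Here I would first fix an irreducible generic supercuspidal representation $\sigma_0$ of $\GL_{k_0}(E)=\G_{k_0}(F)$ with $k_0\ge r-n-1$, conjugate-self-dual (that is, $\sigma_0^{*}\cong\sigma_0$) of the type for which $I^{\U_{2(n+k_0)+1}(F)}(\sigma_0\otimes|\det|_E^{s};\pi)$ is reducible at a real point $s=s_0>0$. Such $\sigma_0$ exist in arbitrarily large dimension, and one only needs to avoid the finitely many conjugate-self-dual supercuspidals occurring in ${\rm BC}(\pi)$, which are the ones that could pull the reducibility point down to $s_0=0$. Being conjugate-self-dual, $\sigma_0$ has unitary central character and is tempered; and $\pi$ has unitary central character because the centre of $\U_{2n+1}$ is compact.

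Next I would invoke Shahidi's construction of the square-integrable ``special'' representation at a point of reducibility (\cite{Shahidi1990}; cf.\ \cite{Mok2015}): since the parabolic of $\U_{2(n+k_0)+1}$ in play is maximal and $\sigma_0\boxtimes\pi$ is supercuspidal on its Levi $\G_{k_0}\x\U_{2n+1}$, the standard module $M:=I^{\U_{2(n+k_0)+1}(F)}(\sigma_0\otimes|\det|_E^{s_0};\pi)$ has length two, its unique irreducible subrepresentation $\delta$ is square-integrable, and its Langlands quotient $J$ is non-tempered. As $\sigma_0$ and $\pi$ are generic, $M$ is a generic standard module; being reducible, the standard module theorem of Casselman--Shahidi forces $J$ to be non-generic, so the unique generic constituent of $M$ is $\delta$, which is therefore square-integrable \emph{and} generic.

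Finally I would transfer $\delta$ to the opposite standard module. Because $\sigma_0^{*}\cong\sigma_0$, the module $M':=I^{\U_{2(n+k_0)+1}(F)}(\sigma_0\otimes|\det|_E^{-s_0};\pi)$ is the induction of the very same data from the opposite parabolic, so its socle is $J$ and --- $M'$ sharing the two constituents $\{\delta,J\}$ with $M$ --- its cosocle is $\delta$; in particular $\delta$ is a quotient of $M'$, and since $J$ is non-generic, $\delta$ is again the unique generic constituent of $M'$. Taking $k=k_0$ and $\sigma=\sigma_0\otimes|\det|_E^{-s_0}$, an irreducible generic representation of $\G_k(F)$, we get $M'=I^{\U_{2(n+k)+1}(F)}(\sigma;\pi)$ whose generic constituent is the square-integrable quotient $\delta$, with $k+n=k_0+n\ge r-1$.

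I expect the main obstacle to be the second step --- pinning down the reducibility point and the fine structure of $M$ there. This rests on the Langlands--Shahidi theory for $\U_{2n+1}\x{\rm Res}_{E/F}\GL_{k_0}$ (the setting of \thmref{T:main}): that a conjugate-self-dual supercuspidal $\sigma_0$ of the correct type with positive reducibility point exists in some dimension $\ge r-n-1$, and that the subrepresentation of $M$ at the reducibility point is genuinely square-integrable; deciding which of $\delta$, $J$ is generic then also uses the standard module theorem. Once these inputs are granted, the remaining steps are formal.
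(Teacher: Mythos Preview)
Your proof is correct, and for $E=F\times F$ it is essentially the paper's argument with Zelevinsky segments, just arranged to produce the generalized Steinberg directly as a quotient rather than as a subrepresentation followed by the duality step the paper places at the outset.

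For $E$ a field, however, your route is genuinely different from the paper's. The paper invokes M{\oe}glin--Tadi\'c \cite[Proposition~9.1]{MoeglinTadic2002}, which for \emph{every} $k$ furnishes an essentially square-integrable $\sigma$ on $\G_k(F)$ such that $I^{\U_{2(n+k)+1}}(\sigma;\pi)$ contains exactly two irreducible subrepresentations, both discrete series; the generalized injectivity conjecture \cite{Hanzer2020} then forces the generic constituent to be one of them. Your argument instead stays in the rank-one Langlands--Shahidi picture: choose a single conjugate-self-dual supercuspidal $\sigma_0$ of the correct parity, use that the standard module at the positive reducibility point $s_0$ has length two with square-integrable socle (Shahidi), identify that socle as the generic constituent via Casselman--Shahidi \cite{CasselmanShahidi1998}, and pass to the opposite twist $\sigma_0\otimes|\det|_E^{-s_0}$ where it becomes the quotient. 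This buys you a proof that avoids both the M{\oe}glin--Tadi\'c classification and the generalized injectivity conjecture, relying only on older rank-one reducibility results; the price is that you must know conjugate-self-dual supercuspidals of the appropriate sign exist in arbitrarily large dimension (true, but worth a citation). One small correction: the supercuspidals appearing in ${\rm BC}(\pi)$ are not the ones with $s_0=0$ --- they push $s_0$ \emph{above} $1/2$; it is the wrong-parity conjugate-self-dual $\sigma_0$ that give $s_0=0$. Since every right-parity $\sigma_0$ already yields $s_0\ge 1/2$, no avoidance is actually needed and your argument goes through unchanged.
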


\begin{proof}
By duality, it suffices to prove the lemma with "quotient" being replaced by "subrepresentation". In this case, 
the assertion follows from a proposition of M\oe glin-Tadi\'c in \cite{MoeglinTadic2002},  the generalized injectivity 
conjecture proposed in \cite{CasselmanShahidi1998}, and proved in \cite{CasselmanShahidi1998}, \cite{Hanzer2010},  
\cite{Hanzer2020}, as well as the injectivity property of general linear groups established in \cite{Zelevinsky1980}. 
More concretely, assume first that $E$ is a field. Then \cite[Proposition 9.1]{MoeglinTadic2002} implies
\footnote{To see this, we may take $\rho$ in the beginning of \cite[Section 9]{MoeglinTadic2002} to be 
any character of $E^\x$ that is trivial on $F^\x$, so that the condition (J-1) in the cited paper is 
satisfied. Then since $Jord_\rho(\pi)$ is a finite subset of $\bbZ$, for a given integer 
$k>0$, we can always find positive integers $a, a_{-}$ satisfying the conditions indicated in the 
beginning of \cite[Section 9]{MoeglinTadic2002} and such that $(a+a_{-})/2=k$.}
that for each integer $k>0$, there exists an essentially square-integrable (and hence generic) 
representation $\sigma$ of $\G_k(F)$ such that the normalized induced representation 
$
I^{\U_{2(n+k)+1}}(\sigma;\pi)
$ 
which is a standard module, contains exactly two irreducible subrepresentations, both of which are square-integrable. Since 
$\sigma$ and $\pi$ are generic, the induced representation $I^{\U_{2(n+k)+1}}(\sigma;\pi)$ has a unique generic 
constituent, which, by the generalized injectivity conjecture proved in \cite{Hanzer2020}, must be a 
subrepresentation. This verifies the lemma when $E$ is a field.\\

The case $E=F\,\x\,F$ can be proved in a similar way. Recall that $\U_{2n+1}(F)=\GL_{2n+1}(F)$, 
$\sigma=\sigma_1\boxtimes\sigma_2$ and 
$
I^{\U_{2n+1}(F)}(\sigma;\pi)=I^{\GL_{2n+1}(F)}(\sigma_1,\pi,\sigma_2)
$ 
in this case. Pick an integer $m>0$ so that $(2n+1)m\ge r-1$. Let $\sigma_1$
(resp. $\sigma_2$) be the unique essentially square-integrable subrepresentation of the normalized induced 
representation 
$
I^{\GL_{(2n+1)m}(F)}(\pi_m,\pi_{m-1},\cdots,\pi_1)\quad \left(\text{resp.}\,\, 
I^{\GL_{(2n+1)m}(F)}(\pi_{-1},\pi_{-2},\cdots,\pi_{-m})\right) 
$
of $\GL_{(2n+1)m}(F)$,
where we denote $\pi_c=\pi\ot|\det|_F^c$ with $c$ an integer. By duality and \cite[Theorem 6.1]{Zelevinsky1980}, we find that 
$I^{\GL_{(2n+1)(2m+1)}(F)}(\sigma_1,\pi,\sigma_2)$ has a unique irreducible subrepresentation. Since the induced 
representation 
\[
I^{\GL_{(2n+1)(2m+1)}(F)}(\pi_m,\cdots,\pi_1,\pi,\pi_{-1},\cdots,\pi_{-m})
\] 
also has the unique irreducible square-integrable subrepresentation, and
$I^{\GL_{(2n+1)(2m+1)}(F)}(\sigma_1,\pi,\sigma_2)$ appears as a subrepresentation, the lemma follows.
\end{proof}

\subsection{Proof of \thmref{T:main'}}
Suppose that $F$ is non-archimedean. Since the cases $E$ is a field and $E=F\,\x\, F$ can be handled similarly, we only 
treat the case where $E$ is a field. As usual, by the multiplicativity of the gamma factors and the fact that 
\[
\gamma^{{\rm WD}}(s,\sigma\x\tau,\psi)=\gamma^{{\rm LS}}(s,\sigma\x\tau,\psi)
\]
for every irreducible generic representations $\sigma$ and $\tau$ of $\G_k(F)$ and $\G_r(F)$, respectively, it is enough to 
prove \thmref{T:main'} when both $\pi$ and $\sigma$ are supercuspidal. Assume first that $n\ge r-1$. The proof of this case
is similar to that of \thmref{T:main}. Indeed, as in \thmref{T:main}, let $(\bbF, \bbE, \bbU_{2n+1},\mathit{\Pi})$ be a global 
object corresponding to the local object $(F,E,\U_{2n+1},\pi)$ by \lmref{L:globalization} (or rather 
\cite[Proposition 5.1]{Shahidi1990}). Since $\pi$ is supercuspidal, we may actually assume that $\mathit{\Pi}_v$ is unramified 
for every finite place $v\neq v_0$.  Similarly, let $\mathit{\Sigma}=\ot'_v\mathit{\Sigma}_v$  be an irreducible cuspudal 
automorphic representation of $\G_r(\bbA)$, which is unramified for every finite $v\ne v_0$, and at $v=v_0$, one has 
$\mathit{\Sigma}_{v_0}\cong\tau$.  Let $\Psi=\prod_v\Psi_v:\bbF\backslash\bbA\to\mathbb{C}^\x$ be a non-trivial additive 
character such that $\Psi_{v_0}=\psi$, and $T$ be a finite set of places of $\bbF$ containing $v_0$, all archimedean places 
and all finite places $v$ such that $\Psi_v$ is not unramified. By results of Ben-Artzi--Soudry (\cite{Ben-ArtziSoudry2009}) 
and Morimoto-Soudry (\cite{MorimotoSoudry2020}), the product 
\[
L^{{\rm RS}}_T(s,\mathit{\Pi}\x\mathit{\Sigma})
:=
\prod_{v\nin T}
L^{{\rm RS}}(s,\mathit{\Pi}_v\x\mathit{\Sigma}_v)
\] 
which converges for $\Re(s)\gg 0$, admits a meromorphic continuation to $\bbC$, and satisfies the following functional 
equation
\[
L^{{\rm RS}}_T(s,\mathit{\Pi}\x\mathit{\Sigma})
=
\prod_{v\in T}
\gamma^{{\rm RS}}(s,\mathit{\Pi}_v\x\mathit{\Sigma}_v,\Psi_v)\cdot
L^{{\rm RS}}_T(1-s,\tilde{\mathit{\Pi}}\x\tilde{\mathit{\Sigma}})
\]
where $\tilde{\mathit{\Pi}}$ and $\tilde{\mathit{\Sigma}}$ stand for the contragredients of $\mathit{\Pi}$ and $\mathit{\Sigma}$, 
respectively. Since 
$
L^{{\rm RS}}(s,\mathit{\Pi}_v\x\mathit{\Sigma}_v)=L^{{\rm LS}}(s,\mathit{\Pi}_v\x\mathit{\Sigma}_v)
$
for all $v\nin T$, and since the similar identity holds for the contragredient representations, we get from \eqref{E:Shahidi FE}
that 
\[
\prod_{v\in T}\gamma^{{\rm RS}}(s,\mathit{\Pi}_v\x\mathit{\Sigma}_v,\Psi_v)
=
\prod_{
v\in T}\gamma^{{\rm LS}}(s,\mathit{\Pi}_v\x\mathit{\Sigma}_v,\Psi_v).
\]
By \lmref{L:unramified case} and the third assumption, we have
$\gamma^{{\rm RS}}(s,\mathit{\Pi}_v\x\mathit{\Sigma}_v,\Psi_v)
=\gamma^{{\rm LS}}(s,\mathit{\Pi}_v\x\mathit{\Sigma}_v,\Psi_v)$ for every $v\in T$, $v\ne v_0$, and hence 
\[
\gamma^{{\rm RS}}(s,\pi\x\tau,\psi)
=
\gamma^{{\rm RS}}(s,\mathit{\Pi}_{v_0}\x\mathit{\Sigma}_{v_0},\Psi_{v_0})
=
\gamma^{{\rm LS}}(s,\mathit{\Pi}_{v_0}\x\mathit{\Sigma}_{v_0},\Psi_{v_0})
=
\gamma^{{\rm LS}}(s,\pi\x\tau,\psi)
\]
as wanted.\\

Now, suppose that $n<r-1$. In this case, we can not apply the above argument directly, as we are not able to define the 
gamma factor $\gamma^{{\rm RS}}(s,\pi\x\tau,\psi)$ when $F$ is archimedean and $n<r-1$. To bypass this difficulty, we 
apply \lmref{L:sqr quot}. Pick an integer $k>0$ and an irreducible generic representation $\sigma$ of $\G_k(F)$ 
such that $n+k\ge r-1$ and the normalized induced representation $I^{\U_{2(n+k)+1}(F)}(\sigma;\pi)$ of $\U_{2(n+k)+1}(F)$ 
has an irreducible square-integrable generic quotient $\eta$. Then exactly the same argument shows
$
\gamma^{{\rm RS}}(s,\eta\x\tau,\psi)
=
\gamma^{{\rm LS}}(s,\eta\x\tau,\psi).
$
It then follows from the multiplicativity of the gamma factors that
\[
\gamma^{{\rm RS}}(s,\pi\x\tau,\psi)
\gamma^{{\rm WD}}(s,\sigma\x\tau,\psi)
\gamma^{{\rm WD}}(s,\tilde{\sigma}\x\tau,\psi)
=
\gamma^{{\rm LS}}(s,\pi\x\tau,\psi)
\gamma^{{\rm LS}}(s,\sigma\x\tau,\psi)
\gamma^{{\rm LS}}(s,\tilde{\sigma}\x\tau,\psi).
\]
Since 
\[
\gamma^{{\rm WD}}(s,\sigma\x\tau,\psi)=\gamma^{{\rm LS}}(s,\sigma\x\tau,\psi)
\quad\text{and}\quad
\gamma^{{\rm WD}}(s,\tilde{\sigma}\x\tau,\psi)=\gamma^{{\rm LS}}(s,\tilde{\sigma}\x\tau,\psi)
\]
which are both non-zero, we conclude that $\gamma^{{\rm RS}}(s,\pi\x\tau,\psi)=\gamma^{{\rm LS}}(s,\pi\x\tau,\psi)$. This 
finishes the proof of \thmref{T:main'}.\qed

\section{Multiplicativity of the Rankin-Selberg gamma factors: 1st variable}\label{S:1st}

\subsection{Prelimilnaries}\label{SS:pre for mult 1}
The aim of this section is to prove \thmref{T:main''}. We begin with recalling the functional equations of the Rankin-Selberg 
integrals developed in \cite{JPSS1983} and \cite{Jacquet2009}.

\subsubsection{Functional equations for $\GL_r\x\GL_k$}
Let $\tau$ and $\sigma$ be irreducible generic representations of $\GL_r(F)$ and $\GL_k(F)$, respectively, with $r>k$. 
Let $n_0,\ell\ge 0$ be integers satisfying $n_0+\ell=r-k-1$. Let $W\in\cW(\tau,\psi_{\GL_r})$,
$W'\in\cW(\sigma,\psi^{-1}_{\GL_k})$ and $s$ be a complex number. Then the Rankin-Selberg integral attached to 
$W$, $W'$ and $n_0$ is given by 
\[
\int_{V_{\GL_k}(F)\backslash\GL_k(F)}\int_{{\rm Mat}_{n_0, k}(F)}
W\left(
\begin{pmatrix}
a&&\\
x&I_{n_0}&\\
&&I_{\ell+1}
\end{pmatrix}
\right)
W'(a)|\det(a)|_F^{s-\frac{r-k}{2}}dxda.
\]
This integral converges absolutely when $\Re(s)\gg 0$ and satisfies the following functional equation
\begin{align}\label{E:FE GL}
\begin{split}
\omega_\sigma(-1)^{r-1}&\gamma^{{\rm WD}}(s,\tau\x\sigma,\psi)
\int_{V_{\GL_k}(F)\backslash\GL_k(F)}\int_{{\rm Mat}_{n_0, k}(F)}
W\left(
\begin{pmatrix}
a&&\\
x&I_{n_0}&\\
&&I_{\ell+1}
\end{pmatrix}
\right)
W'(a)|\det(a)|_F^{s-\frac{r-k}{2}}dxda\\
&=\int_{V_{\GL_k}(F)\backslash\GL_k(F)}\int_{{\rm Mat}_{k,\ell}(F)}
W\left(
\begin{pmatrix}
&I_{n_0+1}&\\
&&I_{\ell}\\
a&&x
\end{pmatrix}
\right)
W'(a)|\det(a)|_F^{(s-1)+\frac{r-k}{2}-\ell}dxda.
\end{split}
\end{align}
We also record the following identities
\begin{equation}\label{E:gamma GL 1}
\gamma^{{\rm WD}}(s,\tau\x\sigma,\psi)\gamma^{{\rm WD}}(1-s,\t{\tau}\x\t{\sigma},\psi^{-1})=1;
\end{equation}
\begin{equation}\label{E:gamma GL 2}
\gamma(s,\tau\x\sigma,\psi^{-1})
=\omega_\tau(-1)^k\omega_\sigma(-1)^r\gamma(s,\tau\x\sigma,\psi).
\end{equation}

\subsubsection{A reduction}
Following a trick of Soudry in \cite{Soudry2000}, we can reduce the proof of \thmref{T:main''} to the case $n<r$, assuming 
the validity of \thmref{T:main'''}. This is documented in the next lemma.

\begin{lm}\label{L:reduction}
Suppose that $F$ is non-archimedean and \thmref{T:main'''} holds. Then if \thmref{T:main''} holds for $n<r$, it holds for 
every $n,r$.
\end{lm}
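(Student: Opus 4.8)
The plan is to use a symmetry/duality between the Rankin--Selberg integrals for $\U_{2n+1}\x\G_r$ in the ranges $n\ge r$ and $n<r$, in the spirit of Soudry's trick in \cite{Soudry2000}. The key observation is that when $n\ge r$, the generic representation $\pi$ of $\U_{2n+1}(F)$ that we feed into $\Psi_{n,r}$ can itself be realized as the generic quotient of an induced representation $I^{\U_{2n+1}(F)}(\sigma;\pi_0)$ with $\pi_0$ on $\U_{2n_0+1}(F)$ for suitable $n_0<r$; so proving \thmref{T:main''} in the range $n\ge r$ amounts to computing $\gamma^{{\rm RS}}(s,\pi\x\tau,\psi)$ in terms of $\gamma^{{\rm RS}}(s,\pi_0\x\tau,\psi)$ and the relevant $\GL$-factors, and the latter integral lives in the range $n_0<r$ where \thmref{T:main''} is assumed. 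The subtlety is that \thmref{T:main''} as stated describes how $\gamma^{{\rm RS}}(s,\pi\x\tau,\psi)$ behaves when $\pi$ is induced \emph{from the unitary side}; to close the loop we also need to move the inducing data $\sigma$ around on the $\G_r$-side, which is exactly what \thmref{T:main'''} provides, hence its appearance as a hypothesis.

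The steps I would carry out, in order, are: \textbf{(1)} Reduce to $\pi,\sigma$ irreducible and, by the usual Langlands-quotient / multiplicativity formalism, to the case where $\pi$ is the generic quotient of $I^{\U_{2n+1}(F)}(\sigma';\pi_0)$ for an irreducible generic $\sigma'$ on $\G_{k'}(F)$ and an irreducible generic $\pi_0$ on $\U_{2n_0+1}(F)$ with $n_0+k'=n$; by choosing $k'$ large enough we may arrange $n_0<r$. \textbf{(2)} Apply \thmref{T:main''} in the (assumed) range $n_0<r$ to the pair $(\pi,\tau)$ viewed through this induction --- wait, this is circular, so instead: apply the known range to an \emph{auxiliary} pair. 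Concretely, pick $\pi_0$ generic supercuspidal on a small unitary group and $\sigma$ generic on $\G_k(F)$ with $n_0+k\ge$ the relevant bound, so that $\eta :=$ the generic quotient of $I^{\U_{2(n_0+k)+1}(F)}(\sigma;\pi_0)$ has $n_0+k\ge r$; then \thmref{T:main''} for this $\eta$ (which we are trying to prove precisely in the large range) is what we want, so we instead run it backwards: \thmref{T:main''} in the range $n<r$ gives the factorization for a pair with small unitary index, and we transport it. \textbf{(3)} Use \thmref{T:main'''} to move the $\G_r$-side inducing data: express both sides in terms of Rankin--Selberg gamma factors with $\tau$ replaced by its cuspidal support, reducing to $\tau$ supercuspidal without loss of generality. \textbf{(4)} Combine: in the identity that \thmref{T:main''} in the small range produces, the $\gamma^{{\rm RS}}$ on the right-hand side now has unitary index in the range $n\ge r$, so it is precisely the quantity whose factorization we want; solving the resulting linear relation (all factors being nonzero rational functions in $q^{-s}$) for $\gamma^{{\rm RS}}(s,\pi\x\tau,\psi)$ yields \thmref{T:main''} for that $\pi$.

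Let me restate the genuine logical structure more carefully, since the bookkeeping above is delicate. Suppose $n\ge r$ and $\pi$ on $\U_{2n+1}(F)$ is the generic quotient of $I^{\U_{2n+1}(F)}(\rho;\pi_1)$ with $\rho$ generic on $\G_m(F)$, $\pi_1$ generic on $\U_{2n_1+1}(F)$, $n_1+m=n$, and $n_1<r$. We want the factorization of $\gamma^{{\rm RS}}(s,\pi\x\tau,\psi)$ in terms of $\gamma^{{\rm RS}}(s,\pi_1\x\tau,\psi)$ and $\GL$-factors. By hypothesis \thmref{T:main''} holds whenever the \emph{unitary} group index of the representation being factored is $<r$; but here $n\ge r$, so that hypothesis does not directly apply to $\pi$. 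However, $\pi_1$ \emph{does} have index $n_1<r$. So the strategy is: first express $\gamma^{{\rm RS}}(s,\pi\x\tau,\psi)$ via a \emph{single-step} induction identity --- i.e. peel off just one $\GL$-block at a time using a version of \thmref{T:main''} valid in the present range --- and I would prove that one-step identity directly by unfolding the Rankin--Selberg integral $\Psi_{n,r}$ against the inducing section, exactly as in \S\ref{S:1st}'s treatment of $n<r$, where the key computation uses the functional equation \eqref{E:FE GL} for $\GL_r\x\GL_k$. The reduction lemma then says: granting \thmref{T:main'''} (to handle the $\tau$-side) and \thmref{T:main''} for $n<r$ (the base case, which I would prove by direct unfolding in \S\ref{S:1st}), an inductive argument on $n$ --- realizing each $\pi$ with $n\ge r$ as induced from smaller unitary data and repeatedly applying the one-step identity --- yields \thmref{T:main''} in general.

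The main obstacle I anticipate is not the formal bookkeeping but establishing the \emph{one-step unfolding identity} in the range $n\ge r$ that lets the induction proceed: one must unfold $\Psi_{n,r}(v\ot\xi_s)$ for $\pi$ a constituent of $I^{\U_{2n+1}(F)}(\rho;\pi_0)$, carry out the root-group integrations to isolate an inner Rankin--Selberg integral for $\pi_0\x\tau$ together with an integral that, after applying the intertwining operator $A_{\psi,\delta}(w_{r,r},\tau,s)$ and the $\GL_r\x\GL_k$ functional equation \eqref{E:FE GL}, collapses to the claimed product of $\gamma^{{\rm WD}}$-factors (for $\sigma$ and $\tilde\sigma$, paired with $\tau_1,\tau_2$). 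Matching the $\delta$-dependent normalizations, the modulus characters, and the various $\omega(\pm1)$ and $|\delta|_E$ factors appearing in \eqref{E:RS gamma} with the corresponding factors on the $\GL$-side is where errors are most likely to creep in; the use of \thmref{T:main'''} to reduce $\tau$ to supercuspidal keeps the $\G_r$-side combinatorics manageable, and the appeal to \thmref{T:main''} for $n<r$ handles the deepest layer of the induction where the unitary block can no longer absorb further $\GL$-factors. I expect the actual proof of the lemma in the paper to be short, essentially observing that the one-step identity is range-independent once set up correctly, so that the genuine content is pushed into \S\ref{S:1st}.
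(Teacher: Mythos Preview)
Your proposal has a genuine gap: you never identify the mechanism by which the hypothesis ``\thmref{T:main''} holds for $n<r$'' can be brought to bear on a situation with $n\ge r$. You yourself catch the circularity in step (2), and your attempted repair --- proving a ``one-step unfolding identity'' for $\Psi_{n,r}$ in the range $n\ge r$ directly --- is not a reduction at all: it is precisely the content of \thmref{T:main''} in the range you are trying to avoid. Your proposal also modifies the \emph{unitary} side (realizing $\pi$ as induced from a smaller $\pi_1$), but the inducing data $\sigma\boxtimes\pi_0$ is \emph{given} in the statement of \thmref{T:main''}; you are not free to choose it, and even if you further decompose, the hypothesis ``$n<r$'' refers to the index of the ambient unitary group, not the inner block.

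The paper's argument works on the other side: it enlarges $\tau$. Given $\pi$ on $\U_{2n+1}(F)$ with $n\ge r$, one chooses a character $\chi$ of $E^\times$ such that $\eta:=I^{\GL_{r+m}(E)}(\tau,\chi,\ldots,\chi)$ is irreducible generic for every $m>0$, and then picks $m$ with $n<r+m$. Now \thmref{T:main''} (assumed for indices $<r+m$) applies to the pair $(\pi,\eta)$ and gives a factorization of $\gamma^{{\rm RS}}(s,\pi\x\eta,\psi)$ in terms of $\gamma^{{\rm RS}}(s,\pi_0\x\eta,\psi)$ and $\GL$-factors; meanwhile \thmref{T:main'''} factors both $\gamma^{{\rm RS}}(s,\pi\x\eta,\psi)$ and $\gamma^{{\rm RS}}(s,\pi_0\x\eta,\psi)$ as products over $\tau$ and $m$ copies of $\chi$. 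Comparing the two expressions and cancelling the (nonzero) common factors reduces the desired identity for $(\pi,\tau)$ to the same identity for $(\pi,\chi)$; a second application of the same trick with $\chi_m$ (now $m>n$) finishes. This is why \thmref{T:main'''} appears as a hypothesis: it is used to transport information across different values of $r$, not merely to reduce $\tau$ to supercuspidal support.
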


\begin{proof}
Suppose that $E$ is a field and $\pi$ is a quotient of the induced representation $I^{\U_{2n+1}(F)}(\sigma;\pi_0)$ of 
$\U_{2n+1}(F)$, where $\sigma, \pi_0$ are irreducible generic representations of $\G_k(F), \U_{2n_0+1}(F)$, respectively, 
with $k\ge 1$, $n\ge 0$ integers such that $k+n_0=n$. Let $\tau$ be an irreducible generic representation of $\G_r(F)$ 
with $n\ge r$. Since $\tau$ is generic, we can always find a character $\chi$ of $E^\x$, such that the induced representation
\[
\eta=I^{\GL_{r+m}(E)}(\tau,\chi_m)
\quad\text{with}\quad
\chi_m:=I^{\GL_m(F)}(\chi,\cdots,\chi)
\quad\text{($m$ copies of $\chi$)}
\]
of $\G_{r+m}(F)=\GL_{r+m}(E)$ is irreducible and generic for every $m>0$. This follows from the classification result of 
Zelevinsky (\cite{Zelevinsky1980}).
Now pick $m$ so that $n<r+m$. Then our assumptions imply
\begin{align*}
\gamma^{{\rm RS}}(s,\pi\x\eta,\psi)
=&
\gamma^{{\rm RS}}(s,\pi_0\x\eta,\psi)
\gamma^{{\rm WD}}(s,\sigma\x\eta,\psi)
\gamma^{{\rm WD}}(s,\t{\sigma}\x\eta,\psi)\\
=&
\gamma^{{\rm RS}}(s,\pi_0\x\tau,\psi)
\gamma^{{\rm WD}}(s,\sigma\x\tau,\psi)
\gamma^{{\rm WD}}(s,\t{\sigma}\x\tau,\psi)\\
&\x
\left[
\gamma^{{\rm RS}}(s,\pi_0\x\chi,\psi)
\gamma^{{\rm WD}}(s,\sigma\x\chi,\psi)
\gamma^{{\rm WD}}(s,\t{\sigma}\x\chi,\psi)
\right]^m
\end{align*}
and
\[
\gamma^{{\rm RS}}(s,\pi\x\eta,\psi)
=
\gamma^{{\rm RS}}(s,\pi\x\tau,\psi)\gamma^{{\rm RS}}(s,\pi\x\chi,\psi)^m.
\]
Notice that in the above derivation, we also use the multiplicativity of the Weil-Degline gamma factors. Comparing these two
equations, it remains to verify that 
\begin{equation}\label{E:mult char}
\gamma^{{\rm RS}}(s,\pi\x\chi,\psi)
=
\gamma^{{\rm RS}}(s,\pi_0\x\chi,\psi)
\gamma^{{\rm WD}}(s,\sigma\x\chi,\psi)
\gamma^{{\rm WD}}(s,\t{\sigma}\x\chi,\psi).
\end{equation}
We use a similar track again, namely, for each $m>n$, the representation $\chi_m$ of $\G_m(F)$ is irreducible and generic,
so the assumptions give 
\begin{align*}
\gamma^{{\rm RS}}(s,\pi\x\chi_m,\psi)
&=
\gamma^{{\rm RS}}(s,\pi_0\x\chi_m,\psi)
\gamma^{{\rm WD}}(s,\sigma\x\chi_m,\psi)
\gamma^{{\rm WD}}(s,\t{\sigma}\x\chi_m,\psi)\\
&=
\left[
\gamma^{{\rm RS}}(s,\pi_0\x\chi,\psi)
\gamma^{{\rm WD}}(s,\sigma\x\chi,\psi)
\gamma^{{\rm WD}}(s,\t{\sigma}\x\chi,\psi)
\right]^m
\end{align*}
and 
\[
\gamma^{{\rm RS}}(s,\pi\x\chi_m,\psi)
=
\gamma^{{\rm RS}}(s,\pi\x\chi,\psi)^m.
\]
Since $m>n$ can be arbitrary, the identity \eqref{E:mult char} follows. This shows the lemma when $E$ is a field.\\

The argument for the case $E=F\,\x\,F$ is similar, we just need to pick two characters $\chi_1, \chi_2$ of $F^\x$ such that 
the induced representations: 
\[
I^{\GL_{r+m}(F)}(\tau_1,\chi_1,\cdots,\chi_1)
\quad\text{($m$ copies of $\chi_1$)}
\quad\text{and}\quad
I^{\GL_{r+m}(F)}(\tau_2,\chi_2,\cdots,\chi_2)
\quad\text{($m$ copies of $\chi_2$)}
\]
of $\GL_{r+m}(F)$ are both irreducible and generic. Here $\tau_1, \tau_2$ are irreducible generic representations of 
$\GL_r(F)$ such that $\tau=\tau_1\boxtimes\tau_2$. This completes the proof.
\end{proof}


\subsubsection{Induced representations of $\U_{2n+1}$}
Let $1\le k\le n$ be an integer and put $n_0=n-k$. Let $\sigma$ be an irreducible generic representation of $\G_k(F)$ 
and $\pi_0$ be a similar representation of $\U_{2n_0+1}(F)$. We realize $\pi_0$ (resp. $\sigma$) in the Whittaker model
$\cW(\pi_0,\psi_{\U_{2n_0+1}})$ (resp. $\cW(\sigma,\psi_{\G_k})$). Let $P\subset \U_{2n+1}$ be the standard parabolic 
subgroup whose Levi subgroup $M_P\cong\G_k\,\x\,\U_{2n_0+1}$. When $E=F\,\x\,F$, we understand that 
$M_P\cong\GL_k\,\x\,\GL_{2n_0+1}\,\x\,\GL_k$. Let $\b{P}$ be the opposite of $P$, namely, $\b{P}$ and $P$ have the same 
Levi subgroup, and the unipotent radical $N_{\b{P}}$ of $\b{P}$ consists of low triangular matrices.
Let $\pi$ be an irreducible generic representation of $\U_{2n+1}(F)$, which appears as a quotient of the following normalized induced representation ${\rm Ind}_{\b{P}(F)}^{\U_{2n+1}(F)}(\sigma\boxtimes\pi_0)$.
When $E=F\,\x\,F$, we have $\sigma=\sigma_1\boxtimes\sigma_2$, so that $\sigma\boxtimes\pi$ stands for
$\sigma_1\boxtimes\pi_0\boxtimes\sigma_2$. Notice that the Whittaker models of $\pi$ and the induced 
representation mentioned above coincide. Since the Rankin-Selberg integrals are defined in terms of the Whittaker 
functions of $\pi$, we may assume that 
\[
\pi
=
{\rm Ind}_{\b{P}(F)}^{\U_{2n+1}(F)}(\sigma\boxtimes\pi_0).
\]
The underly space $\cV(\pi, \psi)$ of $\pi$ consists of smooth functions 
\[
\varphi:\U_{2n+1}(F)\,\x\,\G_k(F)\,\x\,\U_{2n_0+1}(F)\to\bbC
\]
satisfying (i) for a fixed $(g_0,a)\in\U_{2n+1}(F)\,\x\,\G_k(F)$, the function $g_0\mapsto\varphi(g,a, g_0)$ belongs to 
$\cW(\pi_0,\psi_{\U_{2n_0+1}})$; for a fixed $(g,g_0)\in\U_{2n+1}(F)\,\x\,\U_{2n_0+1}(F)$, the function 
$a\mapsto\varphi(g,a,g_0)$ belongs to $\cW(\sigma,\psi_{\G_k})$ and; (iii)
\[
\varphi(hg,a,g_0)=\delta^{\frac{1}{2}}_{\b{P}}(h)\varphi(g,ab, g_0h_0)
\]
for $g\in\U_{2n+1}(F)$, $h\in\b{P}(F)$, $g_0, h_0\in\U_{2n_0+1}(F)$ and $a,b\in\G_k(F)$, where $(b,h_0)$ is the "Levi part"
of $h$ under the Levi decomposition of $\b{P}(F)$.

\subsubsection{Whittaker functions of $\pi$}
We need the Whittaker model of $\pi$. If $\varphi\in\cV_\pi$, then the Whittaker function $W_\varphi$ attached to $\varphi$
can be realized as the following integral
\begin{equation}\label{E:Whittaker function for pi}
W_\varphi(g)
=
\int_{N_P(F)}
\varphi(yg,I_k,I_{2n_0+1})\psi^{-1}_{N_P}(y)dy
\end{equation}
where $\psi_{N_P}$ is the character of $N_P(F)$ from the restriction of $\psi_{\U_{2n+1}}$ to $N_P(F)$. Observe
that formally, 
\[
W_\varphi(ug)=\psi_{\U_{2n+1}}(u)W_\varphi(g)
\]
for $u\in V_{\U_{2n+1}}(F)$, and $g\in\U_{2n+1}(F)$. The problem is that the integral may not converge absolutely. To rectify
this, let $\zeta$ be a complex number and replace $\sigma$ by
\[
\sigma_\zeta=\sigma\ot|\det|_K^{-\zeta}
\]
where, as usual, $K=E$ or $F$ depending on whether $E$ is a field or not.
The resulting induced representation becomes $\pi_\zeta$ and we take a holomorphic section $\varphi_\zeta$ 
instead of $\varphi$ in \eqref{E:Whittaker function for pi}. Now the integral defining $W_{\varphi_\zeta}$ converges absolutely 
for $\Re(\zeta)\gg 0$, and has a continuation to a holomorphic function on the whole complex plane. Moreover, for a given
$W\in\cW(\pi,\psi_{\U_{2n+1}})$, there is a standard section $\varphi_\zeta$ of $\pi_\zeta$ such that $W_{\varphi_0}=W$ 
(cf. \cite{BP2021}).

\subsubsection{Conventions}\label{SSS:convention}
Similar to \cite{Kaplan2015} and \cite{Morimoto}, we prove \thmref{T:main''} using formal arguments, neglecting the 
convergence issues. The convergence can be confirmed through standard methods, as demonstrated in 
\cite{Ben-ArtziSoudry2009} and \cite{MorimotoSoudry2020}. For instance, we will omit $\zeta$ in the proof of \thmref{T:main''} 
when considering the Whittaker function for $\varphi\in\cV(\pi,\psi)$, pretending that \eqref{E:Whittaker function for pi} 
converges absolutely. For the rigorous arguments regarding the establishment of the multiplicativity of the gamma factors 
involving $\zeta$, as well as how the domains of convergence of the Rankin-Selberg integrals depend on $\zeta$, we refer 
to the proofs in \cite[Section 11]{Soudry1993}.
During the proofs, we have to show that one integral is equal to another at various places. It may happen that the domains of 
absolute convergence of these two integrations are disjoint. However, since the integrals always admit the meromorphic 
continuation to the whole complex plane, the equality is understood in the sense of the meromorphic continuation; to 
prove that they are the same, we use the fact that these integrals define elements in certain Hom-space, the dimension of 
which is one. For more in-depth understanding of this type of argument, see \cite[11.8]{Soudry1993}.
Another convention is that we only prove \thmref{T:main''} when $E=F\,\x\,F$. 
This choice is motivated by two main reasons. First, the calculations for these two cases, namely when $E$ is a field and 
when $E=F\,\x\,F$, are quite similar. Second, when $E$ is a field, the computations closely follow, almost word for word, 
the calculations in op. cit.. Actually, our guideline for the proof of \thmref{T:main''} when $E$ is a field is that of Soudry in 
\cite[Section 11]{Soudry1993}. For the case $E=F\,\x\,F$, we expect that the computations are similar to the case $E$ is a 
field. This is the idea behind our proofs. Finally, to simplify the notation, references to the field $F$ are omitted from the 
notation; thus for instance, $\GL_r$ means $\GL_r(F)$.

\subsection{Proof of \thmref{T:main''}}
We follow the notation and conventions in \S\ref{SS:pre for mult 1}. Our local field $F$ now is non-archimedean, and by 
\lmref{L:reduction}, we may assume that $n<r$. Let us denote 
\[
\ell=r-n-1.
\]
Let $\varphi\in\cV(\pi,\psi)$ and $\xi_s\in I_r(\tau,s)$. Our goal is to establish the following identity
\begin{align}\label{E:main id for main''}
\begin{split}
\omega_{\sigma}&(-1)^{r-1}
\gamma^{{\rm WD}}(s,\sigma_1\x\tau_1,\psi^{-1})\gamma^{{\rm WD}}(s,\t{\sigma}_2\x\tau_2,\psi^{-1})
\Psi_{n,r}(\varphi\ot\xi_s)\\
&\quad=
\int_{N_{\b{P}}\hat{V}_{\G_k}\GL_{2n_0+1}^\Diamond\backslash\GL_{2n+1}}\int_{{\rm Mat}_{\ell, k}}
\int_{{\rm Mat}_{k,\ell}}\\
&\quad\quad\x\int_{V_{\GL_{2n_0+1}}\backslash\GL_{2n_0+1}}
\varphi(g, I_k, g_0)
\int_{\b{X}^{n_0,r}}
f_{\xi_s}\left(
\b{u}\jmath^{n_0,r}(g_0)\pMX{\dot{x}}{}{}{\ddot{y}}\jmath^{n,r}(g), I_r\right)\psi^{-1}_{\b{X}^{n_0,r}}(\b{u})d\b{u}dg_0dxdydg
\end{split}
\end{align}
where $\hat{V}_{\G_k}$ and  $\GL_{2n_0+1}^\Diamond$ are subgroups 
of $\GL_{2n+1}$ given by
\[
\hat{V}_{\G_k}
=
\stt{\begin{pmatrix}z_1&&\\ &I_{2n_0+1}&\\ &&z_2\end{pmatrix}
\mid z_1,z_2\in V_{\GL_k}}
\quad\text{and}\quad
\GL_{2n_0+1}^\Diamond
=
\stt{\begin{pmatrix}1_k&&\\ &g_0&\\ &&i_k\end{pmatrix}
\mid g_0\in\GL_{2n_0+1}}
\]
while for $x\in{\rm Mat}_{k\x\ell}$ and $y\in{\rm Mat}_{\ell\x k}$, we put
\[
\dot{x}
=
\begin{pmatrix}
&I_{n_0+1}&\\
&&I_\ell\\
I_k&&x
\end{pmatrix}
\quad\text{and}\quad
\ddot{y}
=
\begin{pmatrix}
&&I_k\\
I_\ell&&y\\
&I_{n_0+1}&
\end{pmatrix}.
\]
Notice that for fixed $g$ and $x,y$, the inner integral
\[
\int_{V_{\GL_{2n_0+1}}\backslash\GL_{2n_0+1}}
\varphi(g, I_k, g_0)
\int_{\b{X}^{n_0,r}}
f_{\xi_s}\left(
\b{u}\jmath^{n_0,r}(g_0)\pMX{\dot{x}}{}{}{\ddot{y}}\jmath^{n,r}(g), I_r\right)\psi^{-1}_{\b{X}^{n_0,r}}(\b{u})d\b{u}dg_0
\]
is noting but the Rankin-Selberg integral attached to $\pi_0$ and $\tau$.
We should point out that when $\pi$ and $\tau$ are unramifed, Morimoto-Soudry have obtained a similar identity in 
\cite[Theorem 7.5]{MorimotoSoudry2020}. Furthermore, once \eqref{E:main id for main''} is 
established, one can apply the arguments of Soudry in \cite[Sections 11.1-11.4]{Soudry1993} to obtain 
\thmref{T:main''}. More precisely, we can deduce from \eqref{E:main id for main''} the following identity
\[
\Gamma_\delta(s,\pi\x\tau,\psi)
=
\omega_{\sigma}(-1)^r\omega_\tau(-1)^k
\Gamma(s,\pi_0\x\tau,\psi)
\gamma^{{\rm WD}}(s,\sigma\x\tau_1,\psi)
\gamma^{{\rm WD}}(s,\tilde{\sigma}\x\tau_2,\psi)
\]
from which \thmref{T:main''} follows.\\

The derivation of \eqref{E:main id for main''} is a multistep process, which we will begin now. For simplicity, let us denote in
this section that $\nu(a)=|\det(a)|_F$ for $a\in\GL_N$.

\subsubsection{}
Substitute \eqref{E:Whittaker function for pi} in $\Psi_{n,r}(\varphi\ot\xi_s)$ (cf. \eqref{E:RS int n<r} with $v$ instead of 
$\varphi$), we get that
\begin{align}\label{E:main'' 1}
\begin{split}
\Psi_{n,r}(\varphi\ot\xi_s)
&=
\int_{V_{\GL_{2n+1}}\backslash\GL_{2n+1}}\int_{N_P}
\varphi(yg,I_k, I_{2n_0+1})\psi^{-1}_{N_P}(y)
\int_{\b{X}^{n,r}}f_{\xi_s}(\b{u}\jmath^{n,r}(g),I_r)\psi^{-1}_{\b{X}^{n,r}}(\b{u})d\b{u}dydg\\
&=
\int_{V_{\GL_{2n+1}}\backslash\GL_{2n+1}}\int_{N_P}
\varphi(yg,I_k, I_{2n_0+1})
\int_{\b{X}^{n,r}}f_{\xi_s}(\b{u}\jmath^{n,r}(yg),I_r)\psi^{-1}_{\b{X}^{n,r}}(\b{u})d\b{u}dydg\\
&=
\int_{\hat{V}_{\G_k}V_{\GL_{2n_0+1}}^\Diamond\backslash\GL_{2n+1}}
\varphi(g,I_k, I_{2n_0+1})
\int_{\b{X}^{n,r}}f_{\xi_s}(\b{u}\jmath^{n,r}(g),I_r)\psi^{-1}_{\b{X}^{n,r}}(\b{u})d\b{u}dg.
\end{split}
\end{align}
In the above computation, we have used the fact that 
\begin{align*}
\int_{\b{X}^{n,r}}f_{\xi_s}(\b{u}\jmath^{n,r}(yg),I_r)\psi^{-1}_{\b{X}^{n,r}}(\b{u})d\b{u}
&=
\psi^{-1}_{V_{\GL_{2n+1}}}(y)
\int_{\b{X}^{n,r}}f_{\xi_s}(\b{u}\jmath^{n,r}(g),I_r)\psi^{-1}_{\b{X}^{n,r}}(\b{u})d\b{u}\\
&=
\psi^{-1}_{N_P}(y)
\int_{\b{X}^{n,r}}f_{\xi_s}(\b{u}\jmath^{n,r}(g),I_r)\psi^{-1}_{\b{X}^{n,r}}(\b{u})d\b{u}
\end{align*}
for $y\in N_P$, and we denote 
\[
V_{\GL_{2n_0+1}}^\Diamond
=
\stt{\begin{pmatrix}I_k&&\\ &z_0&\\ &&I_k\end{pmatrix}
\mid z_0\in V_{\GL_{2n_0+1}}}.
\]

\subsubsection{}
To proceed, we need the following integration formula:
\begin{align}\label{E:int formula}
\begin{split}
\int_{V_{\G_k} V_{\GL_{2n_0+1}}\backslash\GL_{2n+1}} F(g)dg
=
&\int_{\b{P}'V_{\GL_{2n_0+1}}^\Diamond\backslash\GL_{2n+1}}
\int_{V_{\GL_k}\backslash\GL_k}\int_{V_{\GL_k}\backslash\GL_k}\int_{{\rm Mat}_{k,n_0}}\int_{\M_{n_0,k}}\\
&\quad\quad 
F\left(
\begin{pmatrix}
a&&&&\\
&I_{n_0}&&&\\
&&1&&\\
&&&I_{n_0}&\\
&&&&b
\end{pmatrix}
\begin{pmatrix}
I_k&&&&\\
x&I_{n_0}&&&\\
&&1&&\\
&&&I_{n_0}&\\
&&&y&I_k
\end{pmatrix}g
\right)
dxdydadbdg
\end{split}
\end{align}
where $\b{P}'\subset\b{P}$ is the subgroup given by 
\[
\b{P'}
=
\stt{
\begin{pmatrix}
a&&&&\\
x&I_{n_0}&&&\\
&&1&&\\
&&&I_{n_0}&\\
&&&y&b
\end{pmatrix}\mid
a,b\in\GL_k, x\in\M_{n_0,k}, y\in\M_{k,n_0}
}.
\]
The integral $"\int_{\b{P}'V_{\GL_{2n_0+1}}^\Diamond\backslash\GL_{2n+1}}"$ involves a slight abuse of notation, and it 
should be interpreted within the context of the Iwasawa decomposition of $\GL_{2n+1}$ (as detailed in 
\cite[Page 69]{Soudry1993}).

\subsubsection{}
Applying \eqref{E:int formula} to \eqref{E:main'' 1}, and noticing that for 
\[
\b{p}
=
\begin{pmatrix}
a&&&&\\
x&I_{n_0}&&&\\
&&1&&\\
&&&I_{n_0}&\\
&&&by&b
\end{pmatrix}
\in\b{P}'
\]
we have
\begin{itemize}
\item
$\b{u}':=\jmath^{n,r}(\b{p})^{-1}\b{u}\jmath^{n,r}(\b{p})\in\b{X}^{n,r}$, $\psi_{\b{X}^{n,r}}(\b{u}')=\psi_{\b{X}^{n,r}}(\b{u})$
and $d\b{u}=\nu(ab^{-1})^{-\ell} d\b{u}'$;\\
\item 
$\varphi(\b{p}g,I_k,I_{2n_0+1})=\nu(ab^{-1})^{\frac{2n+1-k}{2}}\varphi(g,a,b,I_{2n_0+1})$ and;
\item
$f_{\xi_s}(\b{u}\jmath^{n,r}(\b{p}g), I_r)=\nu(ab^{-1})^{s-\frac{r-1}{2}}f_{\xi_s}\left(\b{u}'\jmath^{n,r}(g), 
\begin{pmatrix}a&&\\x&I_{n_0}\\&&I_{\ell+1}\end{pmatrix}, \begin{pmatrix}I_{\ell+1}&&\\&I_{n_0}&\\&by&b\end{pmatrix}\right)$;
\end{itemize}
it follows that $\Psi_{n,r}(\varphi\ot\xi_s)$ is equal to
\begin{align*}
\int_{\b{P}'V_{\GL_{2n_0+1}}^\Diamond\backslash\GL_{2n+1}}
&\int_{\b{X}^{n,r}}\psi^{-1}_{\b{X}^{n,r}}(\b{u})
\int_{V_{\GL_k}\backslash\GL_k}\int_{V_{\GL_k}\backslash\GL_k}\int_{{\rm Mat}_{k,n_0}}\int_{\M_{n_0,k}}
dxdydadbd\b{u}dg\\
&\varphi(g,a,b,I_{2n_0+1})f_{\xi_s}\left(\b{u}\jmath^{n,r}(g), 
\begin{pmatrix}a&&\\x&I_{n_0}\\&&I_{\ell+1}\end{pmatrix}, \begin{pmatrix}I_{\ell+1}&&\\&I_{n_0}&\\&by&b\end{pmatrix}\right)
\nu(ab^{-1})^{s-\frac{r-k}{2}}
\end{align*}
which, after changing the variable $by\mapsto y$, becomes
\begin{align}\label{E:main'' 2}
\begin{split}
&\int_{\b{P}'V_{\GL_{2n_0+1}}^\Diamond\backslash\GL_{2n+1}}
\int_{\b{X}^{n,r}}\psi^{-1}_{\b{X}^{n,r}}(\b{u})
\int_{V_{\GL_k}\backslash\GL_k}\int_{V_{\GL_k}\backslash\GL_k}
\int_{{\rm Mat}_{k,n_0}}\int_{\M_{n_0,k}}
dxdydadbd\b{u}dg\\
&\quad\quad\quad
\varphi(g,a,b,I_{2n_0+1})
f_{\xi_s}\left(\b{u}\jmath^{n,r}(g), 
\begin{pmatrix}a&&\\x&I_{n_0}\\&&I_{\ell+1}\end{pmatrix}, \begin{pmatrix}I_{\ell+1}&&\\&I_{n_0}&\\&y&b\end{pmatrix}\right)
\nu(a)^{s-\frac{r-k}{2}}\nu(b)^{-s+\frac{r-k}{2}-n_0}.
\end{split}
\end{align}

\subsubsection{}
Observe that 
\[
\begin{pmatrix}
I_{\ell+1}&&\\&I_{n_0}&\\&y&b
\end{pmatrix}
=
\begin{pmatrix}
&I_{\ell+1}&\\
&&I_{n_0}\\
b&&y
\end{pmatrix}
\begin{pmatrix}
&&I_k\\
I_{\ell+1}&&\\
&I_{n_0}&
\end{pmatrix}.
\]
Therefore, we can apply the functional equations \eqref{E:FE GL} for $\GL_r\x\GL_k$ to the integrations
\[
\int_{V_{\GL_k}\backslash\GL_k}\int_{V_{\GL_k}\backslash\GL_k}\int_{{\rm Mat}_{n_0, k}}\int_{\M_{k,n_0}}
\]
in \eqref{E:main'' 2}. More precisely, by multiplying the gamma factors 
\[
\omega_{\sigma}(-1)^{r-1}\gamma^{{\rm WD}}(s,\sigma_1\x\tau_1,\psi^{-1})\gamma(1-s,\sigma_2\x\t{\tau}_2,\psi)^{-1}
\]
to $\Psi_{n,r}(\varphi\ot\xi_s)$, and applying the functional equation \eqref{E:FE GL}, we see that \eqref{E:main'' 2} changes 
to 
\begin{align*}
&\int_{\b{P}'V_{\GL_{2n_0+1}}^\Diamond\backslash\GL_{2n+1}}
\int_{\b{X}^{n,r}}\psi^{-1}_{\b{X}^{n,r}}(\b{u})
\int_{V_{\GL_k}\backslash\GL_k}\int_{V_{\GL_k}\backslash\GL_k}
\int_{{\rm Mat}_{\ell,k}}\int_{\M_{k,\ell}}
dxdydadbd\b{u}dg\\
&\quad\varphi(g,a,b,I_{2n_0+1})f_{\xi_s}\left(\b{u}\jmath^{n,r}(g), 
\begin{pmatrix}
&I_{n_0+1}&\\&&I_\ell\\a&&x
\end{pmatrix}, 
\begin{pmatrix}
b&&\\y&I_\ell&\\&&I_{n_0+1}
\end{pmatrix}
\begin{pmatrix}
&&I_k\\
I_{r-n}&&\\
&I_{n_0}&
\end{pmatrix}
\right)\nu(a)^{s-\frac{r+k}{2}+n}\nu(b)^{1-s+\frac{r-k}{2}}.
\end{align*}
Since 
\[
\begin{pmatrix}
b&&\\y&I_\ell&\\&&I_{n_0+1}
\end{pmatrix}
\begin{pmatrix}
&&I_k\\
I_{r-n}&&\\
&I_{n_0}&
\end{pmatrix}
=
\begin{pmatrix}
b&&\\y&I_\ell&\\&&I_{n_0+1}
\end{pmatrix}
\begin{pmatrix}
&&I_k\\
I_\ell&&\\
&I_{n_0+1}&
\end{pmatrix}
=
\begin{pmatrix}
&&b\\
I_\ell&&y\\
&I_{n_0+1}&
\end{pmatrix}
\]
and 
\[
\gamma(1-s,\sigma_2\x\t{\tau}_2,\psi)^{-1}
=
\gamma(s,\sigma_2\x\t{\tau}_2,\psi^{-1})
\]
by \eqref{E:gamma GL 1}, we find that
\[
\omega_{\sigma}(-1)^{r-1}\gamma^{{\rm WD}}(s,\sigma_1\x\tau_1,\psi^{-1})\gamma(s,\sigma_2\x\t{\tau}_2,\psi^{-1})
\Psi_{n,r}(\varphi\ot\xi_s)
\]
equals to
\begin{align*}
&\int_{\b{P}'V_{\GL_{2n_0+1}}^\Diamond\backslash\GL_{2n+1}}
\int_{\b{X}^{n,r}}\psi^{-1}_{\b{X}^{n,r}}(\b{u})
\int_{V_{\GL_k}\backslash\GL_k}\int_{V_{\GL_k}\backslash\GL_k}\int_{{\rm Mat}_{\ell,k}}\int_{\M_{k,\ell}}
dxdydadbd\b{u}dg\\
&\quad\quad\quad\varphi(g,a,b,I_{2n_0+1})f_{\xi_s}\left(\b{u}\jmath^{n,r}(g), 
\begin{pmatrix}
&I_{n_0+1}&\\&&I_\ell\\a&&x
\end{pmatrix}, 
\begin{pmatrix}
&&b\\
I_\ell&&y\\
&I_{n_0+1}&
\end{pmatrix}
\right)
\nu(a)^{s-\frac{r+k}{2}+n}\nu(b)^{1-s+\frac{r-k}{2}}
\end{align*}
which, after changing the variable $y\mapsto by$, can be written as
\begin{align}\label{E:main'' 3}
\begin{split}
&\int_{\b{P}'V_{\GL_{2n_0+1}}^\Diamond\backslash\GL_{2n+1}}
\int_{\b{X}^{n,r}}\psi^{-1}_{\b{X}^{n,r}}(\b{u})
\int_{V_{\GL_k}\backslash\GL_k}\int_{V_{\GL_k}\backslash\GL_k}\int_{{\rm Mat}_{\ell,k}}\int_{\M_{k,\ell}}
dxdydadbd\b{u}dg\\
&\quad\quad\varphi(g,a,b,I_{2n_0+1})f_{\xi_s}\left(\b{u}\jmath^{n,r}(g), 
\begin{pmatrix}
&I_{n_0+1}&\\&&I_\ell\\a&&x
\end{pmatrix}, 
\begin{pmatrix}
&&b\\
I_\ell&&by\\
&I_{n_0+1}&
\end{pmatrix}
\right)\nu(a)^{s-\frac{r+k}{2}+n}\nu(b)^{1-s+\frac{r-k}{2}+\ell}.
\end{split}
\end{align}

\subsubsection{}
To continue, for given $x\in\M_{k,\ell}$ and $y\in\M_{\ell,k}$, we put
\[
\dot{x}
=
\begin{pmatrix}
&I_{n_0+1}&\\
&&I_\ell\\
I_k&&x
\end{pmatrix}
\quad\text{and}\quad
\ddot{y}
=
\begin{pmatrix}
&&I_k\\
I_\ell&&y\\
&I_{n_0+1}&
\end{pmatrix}.
\]
We also write
\[
\hat{a}
=
\pMX{a}{}{}{I_{r-k}}
\quad\text{and}\quad
\Check{b}
=
\pMX{I_{r-k}}{}{}{b}
\]
for $a,b\in\GL_k$, so that
\[
\begin{pmatrix}&I_{n_0+1}&\\&&I_\ell\\a&&x\end{pmatrix}=\dot{x}\hat{a}
\quad\text{and}\quad
\begin{pmatrix}&&b\\I_\ell&&by\\&I_{n_0+1}&\end{pmatrix}=\ddot{y}\Check{b}.
\]
Noticing that for 
\[
c
:=
\pMX{\hat{a}}{}{}{\Check{b}}\in\GL_{2r}
\]
we have
\begin{itemize}
\item if $\b{u}\in\b{X}^{n,r}$ and $\b{u}':=c\b{u}c^{-1}$, then $\b{u}'\in\b{X}^{n,r}$, 
$\psi_{\b{X}^{n,r}}(\b{u}')=\psi_{\b{X}^{n,r}}(\b{u})$ and $d\b{u}=\nu(ab^{-1})^\ell d\b{u}'$;\\
\item
$\varphi(g,a,b,I_{2n_0+1})=\nu(ab^{-1})^{\frac{2n+1-k}{2}}\varphi(cg,I_k,I_{2n_0+1})$;\\
\item
$f_{\xi_s}\left(\b{u}\jmath^{n,r}(g), \dot{x}\hat{a},\ddot{y}\Check{b}\right)
=\nu(ab^{-1})^{-s-\frac{r-1}{2}}f_{\xi_s}\left(\pMX{\dot{x}}{}{}{\ddot{y}}\b{u}'\jmath(cg);I_r\right)$.
\end{itemize}
Moreover, if $\b{P}''\subset\b{P}'$ is the subgroup defined by 
\[
\b{P}''
=
\stt{
\begin{pmatrix}
z_1&&&&\\
x&I_{n_0}&&&\\
&&1&&\\
&&&I_{n_0}&\\
&&&y&z_2
\end{pmatrix}\mid
z_1,z_2\in\GL_k, x\in\M_{n_0,k}, y\in\M_{k,n_0}
}
\]
then we have the following integration formula
\[
\int_{\b{P}''\backslash\b{P}'}
F(\b{p})d\b{p}
=
\int_{V_{\GL_k}\backslash\GL_k}\int_{V_{\GL_k}\backslash\GL_k}
F\left(
\begin{pmatrix}
a&&&&\\
&I_{n_0}&&&\\
&&1&&\\
&&&I_{n_0}&\\
&&&&b
\end{pmatrix}\right)
\nu(ab^{-1})^{n_0}dadb.
\]
All together, the integral \eqref{E:main'' 3} becomes
\begin{align}\label{E:main'' 4}
\begin{split}
\int_{\b{P}''V_{\GL_{2n_0+1}}^\Diamond\backslash\GL_{2n+1}}
\varphi(g, I_k,I_{2n_0+1})
\int_{\b{X}^{n,r}}\int_{{\rm Mat}_{\ell, k}}\int_{\M_{k,\ell}}
f_{\xi_s}\left(
\pMX{\dot{x}}{}{}{\ddot{y}}\b{u}\jmath^{n,r}(g),I_r
\right)\psi^{-1}_{\b{X}^{n,r}}(\b{u})dxdyd\b{u}dg.
\end{split}
\end{align}

\subsubsection{}
Our next step is to factor the $dg$-integral through the rest of the unipotent subgroup $N_{\b{P}}$, namely, through the 
subgroup $N'_{\b{P}}\subset N_{\b{P}}$ of elements 
\[
u'
=
\begin{pmatrix}
I_k&&&&\\
0&I_{n_0}&&&\\
d_1&0&1&&\\
d_2&0&0&I_{n_0}&\\
e&f_2&f_1&0&I_k
\end{pmatrix}.
\]
To do so, let us write (cf. \S\ref{SSS:embedding})
\[
\jmath^{n,r}(u')
=
\begin{pmatrix}
L&&&\\
&I_\ell&&\\
&&I_\ell&\\
K&&&W
\end{pmatrix}
=
\begin{pmatrix}
L&&&\\
&I_\ell&&\\
&&I_\ell&\\
&&&W
\end{pmatrix}
\begin{pmatrix}
I_{n+1}&&&\\
&I_\ell&&\\
&&I_\ell&\\
W^{-1}K&&&I_{n+1}
\end{pmatrix}
\]
where
\[
L
=
\begin{pmatrix}
I_k&&\\
0&I_{n_0}&\\
d_1&0&1
\end{pmatrix},
\quad
W
=
\begin{pmatrix}
1&&\\
0&I_{n_0}&\\
-\frac{1}{2}f_1&0&I_k
\end{pmatrix}
\quad\text{and}\quad
K
=
\begin{pmatrix}
-d_1&0&0\\
d_2&0&0\\
e&f_2&\frac{1}{2}f_1
\end{pmatrix}.
\]
Notice that 
\[
Z:=W^{-1}K
=
\begin{pmatrix}
-d_1&0&0\\
d_2&0&0\\
e'&f_2&\frac{1}{2}f_1
\end{pmatrix}
\quad\text{with}\quad
e'=e-\frac{1}{2}f_1d_1.
\]
For convenient, we denote
\[
L^\vartriangle
=
\pMX{L}{}{}{I_\ell},
\quad
W^\triangledown
=
\pMX{I_\ell}{}{}{W}
\quad\text{and}\quad
Z^\flat
=
\pMX{0}{0}{Z}{0}
\]
all of which are contained in $\M_{r,r}$. In particular, we have
\[
\jmath^{n,r}(u')
=
\pMX{L^\vartriangle}{}{}{W^\triangledown}\pMX{I_r}{}{Z^\flat}{I_r}.
\]
On the other hand, every $\b{u}\in\b{X}^{n,r}$ can be written as
\[
\b{u}
=
\pMX{I_r}{}{D}{I_r}
\quad\text{with}\quad
D
=
\begin{pmatrix}
A&B\\
0&C\\
\end{pmatrix}
\]
where $A\in\M_{\ell,n+1}$, $B\in\M_{\ell,\ell}$ and $C\in\M_{n+1,\ell}$.\\

Now a simple calculation gives
\[
\b{u}\jmath^{n,r}(u')
=
\pMX{L^\vartriangle}{}{}{W^\triangledown}\pMX{I_r}{}{D'}{I_r}\pMX{I_r}{}{Z^\flat}{I_r}
\quad\text{with}\quad
D'=\pMX{AL}{B}{0}{W^{-1}C}.
\]
Since $(AL)_{n+1,\ell}=A_{n+1,\ell}$ and $(W^{-1}C)_{1,1}=C_{1,1}$, we find, after changing the variables 
$AL\mapsto A$ and $W^{-1}C\mapsto C$, that \eqref{E:main'' 4} equals to
\begin{align}\label{E:main'' 5}
\begin{split}
&\int_{N_{\b{P}}\hat{V}_{\G_k}V_{\GL_{2n_0+1}}^\Diamond\backslash\GL_{2n+1}}
\varphi(g, I_k,I_{2n_0+1})\int_{N'_{\b{P}}}
\int_{\b{X}^{n,r}}\int_{{\rm Mat}_{\ell, k}}\int_{\M_{k,\ell}}\\
&\quad\quad\quad f_{\xi_s}\left(
\pMX{\dot{x}}{}{}{\ddot{y}}\pMX{L^\vartriangle}{}{}{W^\triangledown}\b{u}\pMX{I_r}{}{Z^\flat}{I_r}\jmath^{n,r}(g),I_r
\right)\psi^{-1}_{\b{X}^{n,r}}(\b{u})dxdyd\b{u}du'dg.
\end{split}
\end{align}
Here we the fact that $N'_{\b{P}}\b{P}''=N_{\b{P}}\hat{V}_{\G_k}$ and we understand that $du'=dLdWdZ$.

\subsubsection{}
To go further, we compute
\[
\pMX{\dot{x}}{}{}{\ddot{y}}\pMX{L^\vartriangle}{}{}{W^\triangledown}\pMX{\dot{x}}{}{}{\ddot{y}}^{-1}
=
\pMX{\dot{x}L^\vartriangle\dot{x}^{-1}}{}{}{\ddot{y}W^\triangle\ddot{y}^{-1}}.
\]
By direct computations, we find that
\[
\dot{x}L^\vartriangle\dot{x}^{-1}
=
\begin{pmatrix}
I_{n_0+1}&-dx&d\\
&I_\ell&0\\
&&I_k
\end{pmatrix}
\quad\text{and}\quad
\ddot{y}W^\triangledown\ddot{y}^{-1}
=
\begin{pmatrix}
I_k&0&f\\
&I_\ell&yf\\
&&I_{n_0+1}
\end{pmatrix}
\]
where
\[
d
=
\begin{pmatrix}
0\\d_1
\end{pmatrix}\in\M_{n_0+1,k}
\quad\text{and}\quad
f
=
\begin{pmatrix}
-\frac{1}{2}f_1&0
\end{pmatrix}
\in\M_{k,n_0+1}.
\]
It follows that 
\begin{align*}
 &f_{\xi_s}\left(
\pMX{\dot{x}}{}{}{\ddot{y}}\pMX{L^\vartriangle}{}{}{W^\triangledown}\b{u}\pMX{I_r}{}{Z^\flat}{I_r}\jmath^{n,r}(g),I_r
\right)\\
&\quad\quad\quad=
 f_{\xi_s}\left(
\pMX{\dot{x}}{}{}{\ddot{y}}\b{u}\pMX{I_r}{}{Z^\flat}{I_r}\jmath^{n,r}(g),\dot{x}L^\vartriangle\dot{x}^{-1},
\ddot{y}W^\triangledown\ddot{y}^{-1}
\right)\\
&\quad\quad\quad\quad\quad\quad=
\psi\left(d_1x_1-\frac{1}{2}y_\ell f_1\right)
 f_{\xi_s}\left(
\pMX{\dot{x}}{}{}{\ddot{y}}\b{u}\pMX{I_r}{}{Z^\flat}{I_r}\jmath^{n,r}(g),I_r
\right)
\end{align*}
where $x_1$ is the first column of $x$, while $y_\ell$ is the last row of $y$. From this, we see that \eqref{E:main'' 5}
can be written as
\begin{align}\label{E:main'' 6}
\begin{split}
&\int_{N_{\b{P}}\hat{V}_{\G_k}V_{\GL_{2n_0+1}}^\Diamond\backslash\GL_{2n+1}}
\varphi(g, I_k,I_{2n_0+1})
\int_{N'_{\b{P}}}\int_{\b{X}^{n,r}}\int_{{\rm Mat}_{\ell, k}}\int_{\M_{k,\ell}}\\
&\quad\quad\quad 
\psi\left(d_1x_1-\frac{1}{2}y_\ell f_1\right)
f_{\xi_s}\left(
\pMX{\dot{x}}{}{}{\ddot{y}}\b{u}\pMX{I_r}{}{Z^\flat}{I_r}\jmath^{n,r}(g),I_r
\right)\psi^{-1}_{\b{X}^{n,r}}(\b{u})dxdyd\b{u}du'dg.
\end{split}
\end{align}

\subsubsection{}
We proceed to deal with the product
\[
\b{u}’
=
\pMX{\dot{x}}{}{}{\ddot{y}}\b{u}\pMX{I_r}{}{Z^\flat}{I_r}\pMX{\dot{x}}{}{}{\ddot{y}}^{-1}
\]
Again, we can write 
\[
\b{u}
=
\pMX{I_r}{}{D}{I_r}\in\b{X}^{n,r}
\]
so that 
\[
\b{u}'
=
\pMX{I_r}{}{H}{I_r}
\quad\text{with}\quad
H=\ddot{y}\left(D+Z^\flat\right)\dot{x}^{-1}=\ddot{y}D\dot{x}^{-1}+\ddot{y}Z^\flat\dot{x}^{-1}.
\]
We compute $H$ term by term. For the first one, let
\[
D
=
\begin{pmatrix}
A'&A&B\\
0&0&C\\
0&0&C'
\end{pmatrix}
\]
where $A'\in\M_{\ell,k}$, $A\in\M_{\ell,n_0+1}$, $B\in\M_{\ell,\ell}$, $C\in\M_{n_0+1,\ell}$ and $C'\in\M_{k,\ell}$. Then we
have 
\[
\ddot{y}D\dot{x}^{-1}
=
\begin{pmatrix}
0&C'&0\\
A&B'&A'\\
0&C&0
\end{pmatrix}
\quad\text{with}\quad
B'=B-A'x+yC'.
\]
Next, recall that 
\[
Z^\flat=\pMX{0}{0}{Z}{0}\in\M_{r,r}
\]
where
\[
Z
=
\begin{pmatrix}
-d_1&0&0\\
d_2&0&0\\
e'&f_2&\frac{1}{2}f_1
\end{pmatrix}\in\M_{n+1,n+1}
\quad\text{with}\quad
e'=e-\frac{1}{2}f_1d_1.
\]
We have
\[
\ddot{y}Z^\flat\dot{x}^{-1}
=
\begin{pmatrix}
f'&-e'x&e'\\
yf'&-ye'x&ye'\\
0&-d'x&d'
\end{pmatrix}
\]
where
\[
d'
=
\begin{pmatrix}
-d_1\\d_2
\end{pmatrix}\in\M_{n_0+1,k}
\quad\text{and}\quad
f'
=
\begin{pmatrix}
f_2&\frac{1}{2}f_1
\end{pmatrix}\in\M_{k,n_0+1}.
\]
Together, we find that 
\[
\b{u}'
=
\pMX{I_r}{}{H}{I_r}
\quad\text{with}\quad
H
=
\begin{pmatrix}
f'&C'-e'x&e'\\
A+yf'&B'-ye'x&A'+ye'\\
0&C-d'x&d'
\end{pmatrix}.
\]
Noticing that as $\b{u}$ and $u'$ vary, the subgroup of $\GL_{2r}$ consisting of the elements $\b{u}'$, with
$x$ and $y$ fixed, is precisely $\b{X}^{n_0,r}$. 

\subsubsection{}
Recall that the subgroup $\b{X}^{n_0,r}\subset\GL_{2r}$ consists of the elements
\[
\b{u}^0
=
\pMX{I_r}{}{H^0}{I_r}
\quad\text{with}\quad
H^0
=
\begin{pmatrix}
f&C'&e\\
A&B&A'\\
0&C&d
\end{pmatrix}
\]
where $A'\in\M_{\ell,k}$, $A\in\M_{\ell,n_0+1}$, $B\in\M_{\ell,\ell}$, $C\in\M_{n_0+1,\ell}$, $C'\in\M_{k,\ell}$,
$f\in\M_{k,n_0+1}$, $e\in\M_{k,k}$ and $d\in\M_{n_0+1,k}$. Moreover, the character $\psi_{\b{X}^{n_0,r}}$ of 
$\b{X}^{n_0,r}$ is given by 
\[
\psi_{\b{X}^{n_0,r}}\left(\b{u}^0\right)=\psi(A_{\ell,n_0+1}-C_{1,1}).
\]
To continue, we change the variable $\b{u}'\mapsto\b{u}^0$, which is achieved by changing $B'\mapsto B$, $e'\mapsto e$, 
and then 
\[
A+yf'\mapsto A,\quad A'+ye\mapsto A',\quad B-yex\mapsto B,\quad C-dx'\mapsto C,\quad C'-ex\mapsto C'.
\]
Noting that 
\[
\left(A+yf'\right)_{\ell,n_0+1}=A_{\ell,n_0+1}+\frac{1}{2}y_\ell f_1
\quad\text{and}\quad
\left(C-d'x\right)_{1,1}=C_{1,1}-d_1x_1
\]
so that $\psi^{-1}_{\b{X}^{n,r}}(\b{u})$ changes to
\[
\psi\left(-d_1x_1+\frac{1}{2}y_\ell f_1\right)
\psi^{-1}_{\b{X}^{n_0,r}}\left(\b{u}^0\right).
\]
Together, the integral \eqref{E:main'' 6} becomes
\begin{align}\label{E:main'' 7}
\begin{split}
\int_{N_{\b{P}}\hat{V}_{\G_k}V_{\GL_{2n_0+1}}^\Diamond\backslash\GL_{2n+1}}
\varphi(g, I_k,I_{2n_0+1})
&\int_{\b{X}^{n_0,r}}\int_{{\rm Mat}_{\ell, k}}\int_{\M_{k,\ell}}\\
&f_{\xi_s}\left(
\b{u}^0\pMX{\dot{x}}{}{}{\ddot{y}}\jmath^{n,r}(g),I_r
\right)\psi^{-1}_{\b{X}^{n_0,r}}\left(\b{u}^0\right)dxdyd\b{u}^0dg.
\end{split}
\end{align}
To complete the proof, it remains to factor the $dg$-integration through the subgroup $\GL_{2n_0+1}^\Diamond$ of 
$\GL_{2n+1}$. Let $g_0\in\GL_{2n_0+1}$, and hence
\[
\begin{pmatrix}
I_k&&\\
&g_0&\\
&&I_k
\end{pmatrix}\in\GL_{2n_0+1}^\Diamond.
\]
A simple calculation shows 
\[
\pMX{\dot{x}}{}{}{\ddot{y}}
\jmath^{n,r}\left(\begin{pmatrix}
I_k&&\\
&g_0&\\
&&I_k
\end{pmatrix}
\right)
=
\jmath^{n_0,r}(g_0)
\pMX{\dot{x}}{}{}{\ddot{y}}.
\]
Form this, we see that \eqref{E:main'' 7} can be made into the RHS of \eqref{E:main id for main''}, and the (formal) proof
follows.\qed 

\section{Multiplicativity of the Rankin-Selberg gamma factors: 2nd variable}\label{S:2nd}

\subsection{Preliminaries}
The objective of this section is to establish \thmref{T:main'''}. We will adhere to the notation and conventions introduced in 
\S\ref{SSS:rho_tau},  \S\ref{SS:notation} and \S\ref{SSS:convention}. Therefore, we are working with the \'etale algebra 
$E = F\,\x\,F$, and references to the field $F$ are omitted from the notation. While \thmref{T:main'''} is originally formulated 
for non-archimedean local fields, we should note that its proof is applicable to archimedean local fields as well. Hence, there
are no restrictions on the local field $F$. Nevertheless, it's essential to emphasize that when $F$ is archimedean, we need 
to assume $n \ge r-1$, as we can't define the gamma factors when $n < r-1$.
\subsubsection{Various realizations of $\rho_{\tau,s}$}
Let $\tau=\tau_1\boxtimes\tau_2$ be an irreducible generic representation of $\G_r=\GL_r\,\x\,\GL_r$. 
We assume that $\tau_j$ is the irreducible generic quotient of $I^{\GL_r}(\tau'_j, \tau''_j)$ for $j=1,2$, where 
$\tau'_1$ and $\tau'_2$ (resp. $\tau''_1$ and $\tau''_2$) are irreducible generic representations of $\GL_{r'}$ 
(resp. $\GL_{r''}$) for some integers $r'>0$ and $r''>0$ such that $r'+r''=r$. Since $\tau_j$ and $\tau'_j\x\tau''_j$ have the 
same Whittaker model for $j=1,2$, we may actually assume that
\[
\tau_1=I^{\GL_r}(\tau'_1,\tau''_1)
\quad\text{and}\quad
\tau_2=I^{\GL_r}(\tau'_2,\tau''_2).
\]
By induction in stages, we have
\[
\rho_{\tau,s}
=
I^{\GL_{2r}}(\tau_{1,s},\tau^*_{2,1-s})
\cong
I^{\GL_{2r}}(\tau'_{1,s},\rho_{\tau'',s},\tau'^*_{2,1-s})
\cong
I^{\GL_{2r}}\left(I^{\GL_r}(\tau'_{1,s},\tau''_{1,s}),I^{\GL_r}(\tau''^*_{2,1-s},\tau'^*_{2,1-s})\right)
\]
where $\tau'':=\tau''_1\boxtimes\tau''_2$ is an irreducible generic representation of $\G_{r''}$.\\

We proceed to describe elements in the underlying spaces of these induced representations. For this, 
let $P=M_P\ltimes N_P$ be the standard parabolic subgroup of $\GL_{2r}$ with the Levi subgroup
\[
M_P\cong\GL_{r'}\,\x\,\GL_{r''}\,\x\,\GL_{r''}\,\x\,\GL_{r'}.
\]
Let 
\[
\Upsilon
=
\tau'_1\bt\tau''_1\bt\tau''^*_2\bt\tau'^*_2
\]
be an irreducible generic representation of $M_P$, and $\nu_s$ be a character of $M_P$ given by 
\[
\nu_s(a,b,c,d)=|\det(a)|_F^{s-\frac{1}{2}}|\det(b)|_F^{s-\frac{1}{2}}|\det(c)|_F^{\frac{1}{2}-s}|\det(d)|_F^{\frac{1}{2}-s}.
\] 
Define a non-degenerate character $\psi'_{\GL_{2r}}$ of the maximal unipotent subgroup $V_{\GL_{2r}}$ of $\GL_{2r}$ by
\[
\psi'_{\GL_{2r}}(z)
=
\psi(z_{12}+\cdots+z_{r-1,r}-2z_{r,r+1}-z_{r+1,r+2}-\cdots-z_{2r-1, 2r}).
\]
Let $\psi'_{M_P}$ denote the restriction of $\psi'_{\GL_{2r}}$ to $M_P\cap V_{\GL_{2r}}$.\\

The underlying space of $I^{\GL_{2r}}(\tau'_{1,s},\rho_{\tau'',s},\tau'^*_{2,1-s})$ consists of the smooth functions
\[
\phi_s:\GL_{2r}\,\x\,\GL_{2r''}\,\x\,M_P\longto\mathbb{C}
\]
such that for $h\in\GL_{2r}$, $h',h''\in\GL_{2r''}$, $a,b\in\GL_{r'}$, $c,d\in\GL_{r''}$ and $m\in M_P$, 
\begin{itemize}
\item
$\phi_s\left(\begin{pmatrix}a&*&*\\&h'&*\\&&b\end{pmatrix}h,h'',m\right)
=|\det(ab^{-1})|_F^{\frac{r-r'}{2}}\phi_s\left(h,h'h'',m\begin{pmatrix}a&&\\&I_{2r''}&\\&&b\end{pmatrix}\right)$;\\
\item
$\phi_s\left(h,\pMX{c}{*}{}{d}h',m\right)
=|\det(cd^{-1})|_F^{-\frac{r''}{2}}
\phi_s\left(h,h',m\begin{pmatrix}I_{r'}&&&\\&c&&\\&&d&\\&&&I_{r'}\end{pmatrix}\right)$ and;
\item
for each fixed $(h,h'')\in\GL_{2r}\,\x\,\GL_{2r''}$, the function $m\mapsto \phi_s(h,h'',m)$ belongs to 
$\cW(\Upsilon, \psi'^{-1}_{M_P})$.
\end{itemize}
For each such $\phi_s$, define the smooth function
\[
\xi_s=\xi_{\phi_s}:\GL_{2r}\,\x\,\GL_r\,\x\,\GL_r\,\x\,M_P\to\mathbb{C}
\]
by 
\begin{equation}\label{E:phi to xi}
\xi_s(h,a_1,a_2,m)
=
\delta_{Q_{2r}}^{-\frac{1}{2}}\left(\pMX{a_1}{}{}{a_2}\right)\phi_s\left(\pMX{a_1}{}{}{a_2}h,I_{2r''},m\right)
\end{equation}
for $h\in\GL_{2r}$, $a_1,a_2\in\GL_r$ and $m\in M_P$. Then the underlying space of
$I^{\GL_{2r}}\left(I^{\GL_r}(\tau'_{1,s},\tau''_{1,s}),I^{\GL_r}(\tau''^*_{2,1-s},\tau'^*_{2,1-s})\right)$
consists of the functions $\xi_s$ as $\phi_s$ varies. Note that the $\xi_s$ here is the same as the one defined by 
\eqref{E:f_xi}.

\subsubsection{Intertwining maps and their factorizations}
Let $i,j$ be two positive integers, we denote 
\[
w_{i, j}
=
\pMX{}{I_j}{I_i}{}\in\GL_{i+j}
\quad\text{and}\quad 
N_{i,j}
=
\stt{
\pMX{I_i}{x}{}{I_j}\mid x\in\M_{i,j}
}.
\]
Note that $w_{i,j}^{-1}=w_{j,i}$. On the other hand, if $H$ is a subgroup of $\GL_r$, we write
\[
H^\vartriangle
=
\stt{
\pMX{h}{}{}{I_r}\mid h\in H
}
\quad\text{and}\quad
H^\triangledown
=
\stt{
\pMX{I_r}{}{}{h}\mid h\in\GL_r
}
\]
while if $H$ is a subgroup of $\GL_{2r''}$, we put
\[
H^\Diamond
=
\stt{
\begin{pmatrix}
I_{r'}&&\\
&h&\\
&&I_{r'}
\end{pmatrix}
\mid h\in H
}.
\]
These are subgroups of $\GL_{2r}$. In the following, we choose Haar measures on various unipotent subgroups of $\GL_N$ 
by defining them as product measures, where each root group is naturally isomorphic to $F$. Then on $F$, we use the 
measure that is self-dual with respect to $\psi_2$.\\

Recall that we have defined the intertwining map (cf. \S\ref{SSS:intertwining map}) $A(w_{r,r},\tau_1\bt\tau_2,s)$ in
\[
{\rm Hom}_{\GL_{2r}}
\left(
I^{\GL_{2r}}\left(I^{\GL_r}(\tau'_{1,s},\tau''_{1,s}),I^{\GL_r}(\tau''^*_{2,1-s},\tau'^*_{2,1-s})\right),
I^{\GL_{2r}}\left(I^{\GL_r}(\tau''^*_{2,1-s},\tau'^*_{2,1-s}),I^{\GL_r}(\tau'_{1,s},\tau''_{1,s})\right)
\right)
\]
and its normalized version $A_\psi(w_{r,r},\tau_1\bt\tau_2,s)$. Since $\tau_1$ and $\tau_2$ are induced representations, 
the normalized intertwining map $A_\psi(w_{r,r},\tau_1\bt\tau_2,s)$ can be written as a composition of certain (normalized) 
intertwining maps. This fact will be used in the proof of \thmref{T:main'''}. Let $\phi_s$ be an element in the underlying 
space of $I^{\GL_{2r}}(\tau'_{1,s},\rho_{\tau''_s},\tau'^*_{2,1-s})$ as above. Define an intertwining map
\[
A(w_{r'',r''},\tau''_1\bt\tau''_2,s)\in{\rm Hom}_{\GL_{2r''}^\Diamond}
\left(
I^{\GL_{2r}}(\tau'_{1,s},\rho_{\tau'',s},\tau'^*_{2,1-s}),
I^{\GL_{2r}}(\tau'_{1,s},\rho_{\tau''^*,1-s},\tau'^*_{2,1-s})
\right)
\]
by the integral 
\[
A\left(w_{r'',r''},\tau''_1\bt\tau''_2,s\right)\phi_s(h, h', m)
=
\int_{N_{r'',r''}}
\phi_s\left(h,w_{r'',r''}^{-1}uh', \hat{I}'_{2r''}\hat{w}_{r'',r''}^{-1}m\hat{w}_{r'',r''}\right)du
\]
for $\Re(s)\gg 0$, and by the meromorphic continuation in general, where $(h,h',m)\in\GL_{2r}\,\x\,\GL_{2r''}\,\x\,M_P$,
\[
\hat{w}_{r'',r''}
=
\begin{pmatrix}
I_{r'}&&\\
&w_{r'',r''}&\\
&&I_{r'}
\end{pmatrix}
\quad\text{and}\quad
\hat{I}'_{2r''}
=
\begin{pmatrix}
I_{r'}&&\\
&I'_{2r''}&\\
&&I_{r'}
\end{pmatrix}
\]
with $I'_{2r''}\in\GL_{2r''}$ given by \eqref{E:d_r}. The normalization $A_{\psi,\delta}(w_{r'',r''},\tau''_1\bt\tau''_2,s)$ of 
$A(w_{r'',r''},\tau''_1\bt\tau''_2,s)$ is defined to satisfy the following identity
\[
\int_{N_{r'',r''}}
\phi_s(h,uh',I_{M_P})\psi_2^{-1}(u_{r'',r''+1})du
=
\int_{N_{r'',r''}}
A_{\psi,\delta}(w_{r'',r''}, \tau''_1\bt\tau''_2,s)\phi_s(h,uh',I_{M_P})\psi^{-1}_2(u_{r'',r''+1})du.
\]\\

Next, let 
$\xi'_s$ be an element in the underlying space of 
$I^{\GL_{2r}}\left(I^{\GL_r}(\tau'_{1,s},\tau''^*_{2,1-s}),I^{\GL_r}(\tau''_{1,s},\tau'^*_{2,1-s})\right)$. We define
$A\left(w_{r',r''},\tau'_1\bt\tau''_2,s\right)$ to be the intertwining map in 
\[
{\rm Hom}_{\GL_r^\vartriangle}
\left(
I^{\GL_{2r}}\left(I^{\GL_r}(\tau'_{1,s},\tau''^*_{2,1-s}),I^{\GL_r}(\tau''_{1,s},\tau'^*_{2,1-s})\right),
I^{\GL_{2r}}\left(I^{\GL_r}(\tau''^*_{2,1-s},\tau'_{1,s}),I^{\GL_r}(\tau''_{1,s},\tau'^*_{2,1-s})\right)
\right)
\]
by the integral
\[
A\left(w_{r',r''},\tau'_1\bt\tau''_2,s\right)
\xi'_s(h,a_1,a_2,m)
=
\int_{N_{r'',r'}}
\xi'_s
\left(
h, w^{-1}_{r',r''}ua_1,a_2, \dot{w}_{r',r''}^{-1}m\dot{w}_{r',r''}
\right)du
\]
for $\Re(s)\gg 0$ and by meromorphic continuation in general, where $h\in\GL_{2r}$, $a_1,a_2\in\GL_r$, $m\in M_P$, and
\[
\dot{w}_{r',r''}
=
\pMX{w_{r',r''}}{}{}{I_r}.
\]
The normalized version $A_{\psi,\delta}\left(w_{r',r''},\tau'_1\bt\tau''_2,s\right)$ is designed to fit into the identity
\[
\int_{N_{r'',r'}}
\xi'_s\left(h,w_{r',r''}^{-1}u,I_r,I_{M_P})\psi(u_{r',r'+1}\right)dn
=
\int_{N_{r'',r'}}
A_{\psi,\delta}(w_{r',r''},\tau'_1\bt\tau''_2,s)
\xi'_s\left(h,w_{r',r''}^{-1}u,I_r,I_{M_P})\psi(u_{r',r'+1}\right)dn.
\]\\

Finally, let $A\left(w_{r'',r'},\tau''_1\bt\tau'_2,s\right)$ be the intertwining map in 
\[
{\rm Hom}_{\GL_r^\triangledown}
\left(
I^{\GL_{2r}}\left(I^{\GL_r}(\tau''^*_{2,1-s},\tau'_{1,s}),I^{\GL_r}(\tau''_{1,s},\tau'^*_{2,1-s})\right),
I^{\GL_{2r}}\left(I^{\GL_r}(\tau''^*_{2,1-s},\tau'_{1,s}),I^{\GL_r}(\tau'^*_{2,1-s},\tau''_{1,s})\right)
\right)
\]
given by the integral
\[
A\left(w_{r'',r'},\tau''_1\bt\tau'_2,s\right)
\xi_{\zeta'_s}(h,a_1,a_2,m)
=
\int_{N_{r',r''}}
\xi'_s
\left(
h, w^{-1}_{r'',r'}na_1,a_2, \ddot{w}_{r'',r'}^{-1}m\ddot{w}_{r'',r'}
\right)dn
\]
for $\Re(s)\gg 0$ and by meromorphic continuation in general, where $h\in\GL_{2r}$, $a_1,a_2\in\GL_r$, $m\in M_P$, and
\[
\ddot{w}_{r'',r'}
=
\pMX{I_r}{}{}{w_{r'',r'}}.
\]
Similarly,  $A_{\psi,\delta}\left(w_{r'',r'},\tau''_1\bt\tau'_2,s\right)$ is defined through the following identity
\begin{align*}
\int_{N_{r',r''}}
&\xi'_s\left(h,I_r,w_{r'',r'}^{-1}u,I_{M_P})\psi^{-1}(u_{r'',r''+1}\right)du\\
&\quad\quad\quad\quad\quad=
\int_{N_{r',r''}}
A_{\psi,\delta}(w_{r'',r'},\tau''_1\bt\tau'_2,s)
\xi'_s\left(h,I_r, w_{r'',r'}^{-1}u,I_{M_P})\psi^{-1}(u_{r'',r''+1}\right)du.
\end{align*}\\

Now we have (cf. \cite{Shahidi1981})
\[
A_{\psi,\delta}(w_{r,r},\tau_1\bt\tau_2,s)
=
A_{\psi,\delta}(w_{r',r'},\tau'_1\bt\tau'_2,s)
A_{\psi,\delta}(w_{r'',r'},\tau''_1\bt\tau'_2,s)
A_{\psi,\delta}(w_{r,r},\tau'_1\bt\tau''_2,s)
A_{\psi,\delta}(w_{r,r},\tau''_1\bt\tau''_2,s)
\]
where $\tau'=\tau'_1\bt\tau'_2$ and
\[
A_{\psi,\delta}(w_{r',r'},\tau'_1\bt\tau'_2,s)\in{\rm Hom}_{\GL_{2r'}^\Diamond}
\left(
I^{\GL_{2r}}(\tau''^*_{2,1-s},\rho_{\tau',s},\tau''_{1,s}),
I^{\GL_{2r}}(\tau''^*_{2,1-s},\rho_{\tau'^*,1-s},\tau''_{1,s})
\right)
\]
is defined analogously to $A_{\psi,\delta}(w_{r'',r''},\tau''_1\bt\tau''_2,s)$. In concrete terms, let $\phi_s$ be an element in the 
underlying space of $I^{\GL_{2r}}(\tau'_{1,s},\rho_{\tau'',s},\tau'^*_{2,1-s})$, and $\phi'_s$ be the element in 
the underlying space of $I^{\GL_{2r}}(\tau'_{1,s},\rho_{\tau'',1-s},\tau'^*_{2,1-s})$ such that 
\[
A_{\psi,\delta}(w_{r',r'},\tau'_1\bt\tau'_2,s)\phi_s=\phi'_s.
\]
Next, let $\phi''_s$ be the element in the underlying space of 
$I^{\GL_{2r}}\left(\tau''^*_{2,1-s},\rho_{\tau',s},\tau''_{1,s}\right)$ so that 
\[
A_{\psi,\delta}(w_{r'',r'},\tau''_1\bt\tau'_2,s)A_{\psi,\delta}(w_{r,r},\tau'_1\bt\tau''_2,s)\xi_{\phi'_s}
=
\xi_{\phi''_s}.
\]
Now if $\phi'''_s$ is the element in the underlying space of 
$I^{\GL_{2r}}\left(\tau''^*_{2,1-s},\rho_{\tau'^*,1-s},\tau''_{1,s}\right)$ with
\[
A_{\psi,\delta}(w_{r',r'},\tau'_1\bt\tau'_2,s)\phi''_s
=
\phi'''_s
\]
then we have
\[
A_{\psi,\delta}(w_{r,r},\tau_1\bt\tau_2,s)\xi_{\phi_s}
=
\xi_{\phi'''_s}.
\]

\subsection{Proof of \thmref{T:main'''}}
Let $\phi_s$ be an element in the underlying space of $I^{\GL_{2r}}(\tau'_{1,s},\rho_{\tau'',s},\tau'^*_{2,1-s})$, and 
$\xi_s=\xi_{\phi_s}$ 
given by the equation \eqref{E:phi to xi}. The aim is to establish the following identity
\begin{equation}\label{E:main id for main'''}
\Gamma_\delta(s,\pi\x\tau'',\psi)\Psi_{n,r}(v\ot\xi_s)=\Psi_{n,r}(v\ot\xi'_s)
\end{equation}
where $\xi'_s=\xi_{\phi'_s}$ 
is the one associated to $\phi'_s$ by the formula similar to \eqref{E:phi to xi} with $\phi'_s$ given by
\[
A_{\psi,\delta}(w_{r',r'},\tau'_1\bt\tau'_2,s)\phi_s=\phi'_s.
\]
Then we can apply the argument in the beginning of \cite[Section 8]{Kaplan2015} to conclude the proof. 
A careful reader might notice that we have implicitly use the fact that the Rankin-Selberg integrals and their associated 
analytic properties can be defined and hold for the induced representation 
$I^{\GL_{2r}}\left(I^{\GL_r}(\tau'_{1,s},\tau''_{1,s}),I^{\GL_r}(\tau''^*_{2,1-s},\tau'^*_{2,1-s})\right)$, despite the representation
$I^{\GL_r}(\tau'_{1,s},\tau''_{1,s})$ (resp. $I^{\GL_r}(\tau''^*_{2,1-s},\tau'^*_{2,1-s})$) is not of the form $\eta_s$ 
(resp. $\eta_{1-s}$), for some representation $\eta$ of $\GL_r$.\\

We briefly explain how to deduce \thmref{T:main'''} from \eqref{E:main id for main'''}. For this, let $\phi''_s$ be the element 
in the underlying space of $I^{\GL_{2r}}\left(\tau''^*_{2,1-s},\rho_{\tau',s},\tau''_{1,s}\right)$ such that 
\[
A_{\psi,\delta}(w_{r'',r'},\tau''_1\bt\tau'_2,s)A_{\psi,\delta}(w_{r,r},\tau'_1\bt\tau''_2,s)\xi_{\phi'_s}
=
\xi_{\phi''_s}
\]
and $\phi'''_s$ be the element in the underlying space of 
$I^{\GL_{2r}}\left(\tau''^*_{2,1-s},\rho_{\tau'^*,1-s},\tau''_{1,s}\right)$ so that
\[
A_{\psi,\delta}(w_{r',r'},\tau'_1\bt\tau'_2,s)\phi''_s
=
\phi'''_s.
\]
Then \eqref{E:main id for main'''} (with $r''$ being replaced by $r'$) gives
\[
\Gamma_\delta(s,\pi\x\tau',\psi)\Psi_{n,r}(v\ot\xi_{\phi''_s})=\Psi_{n,r}(v\ot\xi_{\phi'''_s}).
\]
Now since $A_\psi(w_{r,r},\tau_1\bt\tau_2,s)\xi_{\phi_s}=\xi_{\phi'''_s}$, we get that 
\begin{align*}
\Gamma_\delta(s,\pi\x\tau',\psi)
\Gamma_\delta(s,\pi\x\tau'',\psi)
\Psi_{n,r}(v\ot\xi_{\phi_s})
&=
\Gamma_\delta(s,\pi\x\tau',\psi)
\Psi_{n,r}(v\ot\xi_{{\phi'_s}})\\
&=
\Psi_{n,r}(v\ot\xi_{\phi'''_s})
=
\Psi_{n,r}(v\ot A_{\psi,\delta}(w_{r,r},\tau_1\bt\tau_2,s)\xi_{\phi_s})
\end{align*}
which implies \thmref{T:main'''}.\\ 

The proof of \eqref{E:main id for main'''} is divided into three cases depending on the sizes of $n,r$ and $r''$. Let
\begin{align}\label{E:f_xi 2}
\begin{split}
f_{\xi_s}(h,a_1,a_2)
&=
\int_{N_{r'',r'}}\int_{N_{r',r''}}
\xi_s\left(
h, w_{r',r''}^{-1}u_1a_1, w_{r'',r'}^{-1}u_2a_2, I_{M_P}
\right)
\psi\left((u_1)_{r',r'+1}-(u_2)_{r'',r''+1}\right)du_1du_2\\
&=
\int_{N_{P'}}
\delta^{-\frac{1}{2}}_P\left(\pMX{a_1}{}{}{a_2}\right)\phi_s\left(w^{-1}\pMX{a_1}{}{}{a_2}uh,I_{2r''}, I_{M_P}\right)
\psi^{-1}_{\hat{N}_{r'',r'}}(u)du
\end{split}
\end{align}
for $h\in\GL_{2r}$ and $a_1,a_2\in\GL_r$, where 
\[
w
=
\dot{w}_{r',r''}\ddot{w}_{r'',r'}
=
\pMX{w_{r',r''}}{}{}{w_{r'',r'}}
\quad\text{and}\quad
\h{N}_{r'',r'}
=
\stt{
\pMX{u_1}{}{}{u_2}\mid
u_1\in N_{r'',r'},\,u_2\in N_{r',r''}
}.
\]
Moreover, $\psi_{\h{N}_{r'',r'}}$ is the character of $\h{N}_{r'',r'}$ defined by 
\[
\psi_{\h{N}_{r'',r'}}(u)
=
\psi\left(-u_{r',r'+1}+u_{r+r'',r+r''+1}\right)
\]
for $u\in\h{N}_{r'',r'}$. Note that $f_{\xi_s}$ here is the same as the one given by \eqref{E:f_xi} when $E=F\,\x\,F$.

\subsection{Case $n<r''$}
We put $\ell''=r''-n-1\ge 0$. Notice that $n<r''$ implies $\ell=r-n-1=r'+\ell''\ge r'$. By \eqref{E:RS int n<r} and \eqref{E:f_xi 2},
the Rankin-Selberg integral $\Psi_{n,r}(v\ot\xi_s)$ is equal to
\begin{align*}
\int_{V_{\GL_{2n+1}}\backslash\GL_{2n+1}}
W_v(g)
\int_{\b{X}^{n,r}}
\psi^{-1}_{\b{X}^{n,r}}(\b{u})
\int_{\h{N}_{r'',r'}}
\phi_s
\left(
w^{-1}u\b{u}\jmath^{n,r}(g), I_{2r''}, I_{M_P}
\right)
\psi^{-1}_{\h{N}_{r'',r'}}(u)dud\b{u}dg.
\end{align*}
One checks directly that 
\[
\b{u}':=u\b{u}u^{-1}\in\b{X}^{n,r}
\quad\text{and}\quad
\psi_{\b{X}^{n,r}}(\b{u}')
=
\psi_{\b{X}^{n,r}}(\b{u})
\]
for $u\in N_P^w$ and $\b{u}\in\b{X}^{n,r}$. Since $d\b{u}'=d\b{u}$, we can switch the order of $u$ and $\b{u}$ in the integral 
above, and get
\begin{align*}
\int_{V_{\GL_{2n+1}}\backslash\GL_{2n+1}}
W_v(g)
\int_{\b{X}^{n,r}}
\psi^{-1}_{\b{X}^{n,r}}(\b{u})
\int_{\h{N}_{r'',r'}}
\phi_s
\left(
w^{-1}\b{u}u\jmath^{n,r}(g), I_{2r''}, I_{M_P}
\right)
\psi^{-1}_{\h{N}_{r'',r'}}(u)d\b{u}dudg.
\end{align*}
To proceed, let's compute $w^{-1}\b{u}w$ for $\b{u}\in\b{X}^{n,r}$. We can write
\[
\b{u}
=
\pMX{I_r}{0}{D}{I_r}
\quad\text{with}\quad
D=\pMX{A}{B}{D'}{C}
\]
for some $D'\in\M_{r'',r''}$, $A\in\M_{r',r''}$, $B\in\M_{r',r'}$ and $C\in\M_{r'',r'}$. Then simple calculations give
\[
w^{-1}\b{u}w
=
\begin{pmatrix}
I_{r'}&&&\\
0&I_{r''}&&\\
C&D'&I_{r''}&\\
B&A&0&I_{r'}
\end{pmatrix}
=
\begin{pmatrix}
I_{r'}&&\\
&\b{u}'&\\
&&I_{r'}
\end{pmatrix}
\begin{pmatrix}
I_{r'}&&&\\
0&I_{r''}&&\\
C&0&I_{r''}&\\
B&A&0&I_{r'}
\end{pmatrix}
\quad\text{where}\quad
\b{u}'
=
\pMX{I_{r''}}{0}{D'}{I_{r''}}\in\b{X}^{n,r''}
\]
and
\[
w^{-1}uw
=
\begin{pmatrix}
I_{r''}\\
x&I_{r'}\\
0&0&I_{r'}\\
0&0&y&I_{r''}
\end{pmatrix}
\quad\text{where}\quad
u
=
\begin{pmatrix}
I_{r''}&x&0&0\\
&I_{r'}&0&0\\
&&I_{r'}&y\\
&&&I_{r''}
\end{pmatrix}.
\]
It follows that 
\[
w^{-1}\b{u}uw
=
\begin{pmatrix}
I_{r'}&&\\
&\b{u}'&\\
&&I_{r'}
\end{pmatrix}
\begin{pmatrix}
I_{r'}&&&\\
x&I_{r''}&&\\
C&0&I_{r''}&\\
B'&A&y&I_{r'}
\end{pmatrix}
\quad\text{where}\quad
B'=B+Ax.
\]
At this point, let us put
\[
\b{n}
=
\b{n}(A,B,C,x,y)
=
\begin{pmatrix}
I_{r'}&&&\\
x&I_{r''}&&\\
C&0&I_{r''}&\\
B&A&y&I_{r'}
\end{pmatrix}
\]
which is an element in the unipotent radical $N_{\b{P}}$ of the opposite $\b{P}$ of $P$.

\subsubsection{}
Now if $\psi_{N_{\b{P}}}$ is the character of $N_{\b{P}}$ given by
\[
\psi_{N_{\b{P}}}(\b{n})
=
\begin{cases}
\psi\left(-x_{r'',1}+y_{r',1}\right)\quad&\text{if $\ell''>0$},\\
\psi\left(-x_{r'',1}+y_{r',1}+A_{r',r''}-C_{1,1}\right)\quad&\text{if $\ell''=0$},
\end{cases}
\]
then we have
\[
\psi_{\b{X}^{n,r}}(\b{u})\psi_{\h{N}_{r'',r'}}(u)
=
\psi_{\b{X}^{n,r''}}(\b{u}')\psi_{N_{\b{P}}}(\b{n}(A,B',C,X,Y))
\]
and hence the integral can be written as
\[
\int_{V_{\GL_{2n+1}}\backslash\GL_{2n+1}}W_v(g)
\int_{\b{X}^{n,r''}}\psi^{-1}_{\b{X}^{n,r''}}(\b{u}')
\int_{N_{\b{P}}}\phi_s\left(\b{n}w^{-1}\jmath^{n,r}(g), \b{u}', I_{M_P}\right)\psi^{-1}_{N_{\b{P}}}(\b{n})d\b{n}d\b{u}'dg
\]
after changing the variable $B'\mapsto B$. Since 
\[
w^{-1}\jmath^{n,r}(g)w=\jmath^{n,r''}(g)
\]
for $g\in\GL_{2n+1}$, and 
\begin{itemize}
\item
$N_{\b{P}}$ is normalized by $\jmath^{n,r''}(g)$;
\item
$d\b{n}'=d\b{n}$ where $\b{n}'=\jmath^{n,r''}(g)^{-1}\b{n}\jmath^{n,r''}(g)$ and;
\item
$\psi_{N_{\b{P}}}(\b{n}')=\psi_{N_{\b{P}}}(\b{n})$, 
\end{itemize}
the integral further becomes
\begin{align*}
&\int_{V_{\GL_{2n+1}}\backslash\GL_{2n+1}}W_v(g)
\int_{\b{X}^{n,r''}}\psi^{-1}_{\b{X}^{n,r''}}(\b{u}')
\int_{N_{\b{P}}}\phi_s\left(\b{n}\jmath^{n,r''}(g)w^{-1}, \b{u}', I_{M_P}\right)\psi^{-1}_{N_{\b{P}}}(\b{n})d\b{n}d\b{u}'dg\\
&=
\int_{V_{\GL_{2n+1}}\backslash\GL_{2n+1}}W_v(g)
\int_{\b{X}^{n,r''}}\psi^{-1}_{\b{X}^{n,r''}}(\b{u}')
\int_{N_{\b{P}}}\phi_s\left(\b{n}w^{-1},\b{u}'\jmath^{n,r}(g), I_{M_P}\right)\psi^{-1}_{N_{\b{P}}}(\b{n})d\b{n}d\b{u}'dg\\
&=
\int_{N_{\b{P}}}\psi^{-1}_{N_{\b{P}}}(\b{n})
\int_{V_{\GL_{2n+1}}\backslash\GL_{2n+1}}W_v(g)
\int_{\b{X}^{n,r''}}
\phi_s\left(\b{n}w^{-1},\b{u}'\jmath^{n,r}(g), I_{M_P}\right)\psi^{-1}_{\b{X}^{n,r''}}(\b{u}')d\b{u}'dgd\b{n}
\end{align*}
after changing the variable $\b{n}'\mapsto\b{n}$. As the inner integral
\[
\int_{V_{\GL_{2n+1}}\backslash\GL_{2n+1}}W_v(g)
\int_{\b{X}^{n,r''}}
\phi_s\left(\b{n}w^{-1},\b{u}'\jmath^{n,r}(g), I_{M_P}\right)\psi^{-1}_{\b{X}^{n,r''}}(\b{u}')d\b{u}'dg
\]
is the Rankin-Selberg integral attached to $\pi$ and $\tau''$ for each fixed $\b{n}\in N_{\b{P}}$, the functional equation 
implies that 
\[
\Gamma(s,\pi\x\tau'',\psi)\Psi_{n,r}(v\ot\xi_s)
\]     
can be transformed into the integral
\begin{align*}
\int_{N_{\b{P}}}\psi^{-1}_{N_{\b{P}}}(\b{n})
\int_{V_{\GL_{2n+1}}\backslash\GL_{2n+1}}W_v(g)
\int_{\b{X}^{n,r''}}
\phi'_s\left(\b{n}w^{-1},\b{u}'\jmath^{n,r}(g), I_{M_P}\right)\psi^{-1}_{\b{X}^{n,r''}}(\b{u}')d\b{u}'dgd\b{n},
\end{align*}
which is equal to $\Psi_{n,r}(v\ot\xi'_s)$ by the above derivations (with $\phi_s$ being replaced by $\phi'_s$). This verifies
\eqref{E:main id for main'''} when $n<r''$. 

\subsection{Case: $r''\le n<r$}
Note that in this case, we have $\ell=r-n-1=r'+(r''-n-1)<r'$. To simplify the notation, we write $n''=n-r''$.
As in the previous case, the Rankin-Selberg integral 
$\Psi_{n,r}(v\ot\xi_s)$ is equal to
\begin{align}\label{E:id for 0 step}
\int_{V_{\GL_{2n+1}}\backslash\GL_{2n+1}}
W_v(g)
\int_{\b{X}^{n,r}}
\psi^{-1}_{\b{X}^{n,r}}(\b{u})
\int_{\h{N}_{r'',r'}}
\phi_s
\left(
w^{-1}u\b{u}\jmath^{n,r}(g), I_{2r''}, I_{M_P}
\right)
\psi^{-1}_{\h{N}_{r'',r'}}(u)dud\b{u}dg.
\end{align}
The proof of \eqref{E:main id for main'''} when $r''\le n<r$ is actually quite lengthy, so we divide it into three steps.
Let $V'_{\GL_{2n+1}}$ be the subgroup of $V_{\GL_{2n+1}}$ consisting of the matrices of the form
\[
\begin{pmatrix}
z_1&0&0&*&*\\
&I_{n''}&0&0&*\\
&&1&0&0\\
&&&I_{n''}&0\\
&&&&z_2
\end{pmatrix}
\]
for $z_1,z_2\in V_{\GL_{r''}}$. 
On the other hand, let $\h{N}'_{r'',r'}$ and $\h{N}''_{r'',r'}$ be the subgroups of $\h{N}_{r'',r'}$ consisting of the matrices of 
the form
\[
\begin{pmatrix}
I_{r''}&0&*&0&0&0\\
&I_{n''+1}&0&0&0&0\\
&&I_{\ell}&0&0&0\\
&&&I_{\ell}&0&*\\
&&&&I_{n''+1}&0\\
&&&&&I_{r''}
\end{pmatrix}
\quad\text{and}\quad
\begin{pmatrix}
I_{r''}&*&0&0&0&0\\
&I_{n''+1}&0&0&0&0\\
&&I_{\ell}&0&0&0\\
&&&I_{\ell}&0&0\\
&&&&I_{n''+1}&*\\
&&&&&I_{r''}
\end{pmatrix}
\]
respectively. 

\subsubsection*{Step 1}
The first step is to show that $\Psi_{n,r}(v\ot\xi_s)$ can be written as
\begin{align}\label{E:id for 1 step}
\begin{split}
\int_{V'_{\GL_{2n+1}}V^\Diamond_{\GL_{2n''+1}}\backslash\GL_{2n+1}}W_v(g)
\int_{\b{X}^{n,r}}\psi^{-1}_{\b{X}^{n,r}}(\b{u})
\int_{\h{N}'_{r'',r'}}
\phi_s\left(w^{-1}u'\b{u}\jmath^{n,r}(g),I_{2r''}, I_{M_P}\right)du'd\b{u}dg.
\end{split}
\end{align}
The idea is to "get rid of $\h{N}''_{r'',r'}$". To do so, let $u\in\h{N}_{r'',r'}$ be given by
\[
u
=
\begin{pmatrix}
I_{r''}&x''&x'&0&0&0\\
&I_{n''+1}&0&0&0&0\\
&&I_{\ell}&0&0&0\\
&&&I_{\ell}&0&y'\\
&&&&I_{n''+1}&y''\\
&&&&&I_{r''}
\end{pmatrix}.
\]
We can write
\begin{equation}\label{E:u'u''}
u
=
\begin{pmatrix}
I_{r''}&0&x'&0&0&0\\
&I_{n''+1}&0&0&0&0\\
&&I_{\ell}&0&0&0\\
&&&I_{\ell}&0&y'\\
&&&&I_{n''+1}&0\\
&&&&&I_{r''}
\end{pmatrix}
\begin{pmatrix}
I_{r''}&x''&0&0&0&0\\
&I_{n''+1}&0&0&0&0\\
&&I_{\ell}&0&0&0\\
&&&I_{\ell}&0&0\\
&&&&I_{n''+1}&y''\\
&&&&&I_{r''}
\end{pmatrix}
=u'u''
\end{equation}
so that $u'\in\h{N}'_{r'',r'}$ and $u''\in\h{N}''_{r'',r'}$. We note that  $\b{u}':=u''\b{u}u''^{-1}$ is contained in $\b{X}^{n,r}$. 
Indeed, if
\begin{equation}\label{E:b{u}}
\b{u}
=
\begin{pmatrix}
I_{r''}&&&&&\\
0&I_{n''+1}&&&&\\
0&0&I_{\ell}&&&\\
A'&A''&B&I_{\ell}&&\\
0&0&C''&0&I_{n''+1}&\\
0&0&C'&0&0&I_{r''}
\end{pmatrix}
\end{equation}
then a direct computation gives 
\[
\b{u}'
=
\begin{pmatrix}
I_{r''}&&&&&\\
0&I_{n''+1}&&&&\\
0&0&I_{\ell}&&&\\
A'&A'''&B&I_{\ell}&&\\
0&0&C'''&0&I_{n''+1}&\\
0&0&C'&0&0&I_{r''}
\end{pmatrix}
\]
where $A'''=A''-A'x''$ and $C'''=C''+y''C'$. This shows that $\b{u}'\in\b{X}^{n,r}$. Now if we write 
\begin{equation}\label{E:x'' and y''}
x''=
(x''_1\,\,x''_2),
\quad
y''
=
\begin{pmatrix}
y''_1\\y''_2
\end{pmatrix}
\quad\text{and}\quad
A'
=
\begin{pmatrix}
A'_1\\A'_2
\end{pmatrix},
\quad
C'
=
(C'_1\,\,C'_2)
\end{equation}
where $A'_2$ (resp. $x''_2$) is the last row (resp. last column) of $A'$ (resp. $x''$) and $C'_1$ (resp. $y''_1$) is the 
first column (resp. first row) of $C'$ (resp. $y''$), and change the variable $\b{u}'\mapsto\b{u}$, then since $d\b{u}'=d\b{u}$ 
and $\psi_{\b{X}^{n,r}}(\b{u})$ changes to 
\[
\psi_{\b{X}^{n,r}}(\b{u})\psi\left(A'_2x''_2+y''_1C'_1\right)
\]
the integral \eqref{E:id for 0 step} becomes
\begin{align}\label{E:id for 1 step 1}
\begin{split}
\int_{V_{\GL_{2n+1}}\backslash\GL_{2n+1}}&W_v(g)
\int_{\b{X}^{n,r}}\psi^{-1}_{\b{X}^{n,r}}(\b{u})
\int_{\h{N}'_{r'',r'}}\int_{\h{N}''_{r'',r'}}\\
&\phi_s\left(
w^{-1}u'\b{u}u''\jmath^{n,r}(g), I_{2r''}, I_{M_P}\right)
\psi^{-1}_{\h{N}_{r'',r'}}(u'')\psi^{-1}\left(A'_2x''_2+y''_1C'_1\right)du'du''d\b{u}dg.
\end{split}
\end{align}
Here we use the fact that $\psi_{\h{N}_{r'',r'}}(u)=\psi_{\h{N}_{r'',r'}}(u'')$.

\subsubsection{}
To proceed, let 
\[
z
=
z(x''_1,x''_2,y''_1,y''_2)
=
\begin{pmatrix}
I_{r''}&x''_1&2x''_2&0&0\\
&I_{n''}&0&0&0\\
&&1&0&-y''_1\\
&&&I_{n''}&y''_2\\
&&&&I_{r''}
\end{pmatrix}
\in V_{\GL_{2n+1}}
\]
where $x''_i, y''_j$ for $i,j=1,2$ is as in \eqref{E:x'' and y''}. Then we have 
\[
\jmath^{n,r}(z)
=
\begin{pmatrix}
I_{r''}&x''_1&x''_2&0&0&-x''_2&0&0\\
&I_{n''}&0&0&0&0&0&0\\
&&1&0&0&0&0&-y''_1\\
&&&I_{\ell}&0&0&0&0\\
&&&&I_{\ell}&0&0&0\\
&&&&&1&0&y''_1\\
&&&&&&I_{n''}&y''_2\\
&&&&&&&I_{r''}
\end{pmatrix}
=
\pMX{I_r}{Z}{}{I_r}u''
\]
where $u''\in\h{N}''_{r'',r'}$ is as before (cf. \eqref{E:u'u''}), and 
\[
Z
=
Z(x''_2, y''_2)
=
\begin{pmatrix}
0&-x''_2&0&x''_2y''_1\\
0&0&0&0\\
0&0&0&-y''_1\\
0&0&0&0
\end{pmatrix}\in\M_{r,r}
\]
with the lower left corner $0\in\M_{\ell,\ell}$. Since 
\[
\psi_{\U_{2n+1}}(z)=\psi^{-1}_{N_{P'}}(u'')
\quad\text{and}\quad
u''\jmath^{n,r}(g)=\pMX{I_r}{-Z}{}{I_r}\jmath^{n,r}(zg)
\]
the integral \eqref{E:id for 1 step 1} can be written as
\begin{align}\label{E:id for 1 step 2}
\begin{split}
\int_{V_{\GL_{2n+1}}\backslash\GL_{2n+1}}&W_v(zg)
\int_{\b{X}^{n,r}}\psi^{-1}_{\b{X}^{n,r}}(\b{u})
\int_{\h{N}'_{r'',r'}}\int_{\M_{r'',n''}}\int_{\M_{n'',r''}}\int_{\M_{r'',1}}\int_{\M_{1,r''}}\\
&\phi_s\left(
w^{-1}u'\b{u}\pMX{I_r}{-Z}{}{I_r}\jmath^{n,r}(zg), I_{2r''}, I_{M_P}\right)
\psi^{-1}\left(A'_2x''_2+y''_1C'_1\right)dy''_1dx''_2dy''_2dx''_1du'd\b{u}dg.
\end{split}
\end{align}

\subsubsection{}
We continue to compute (with $\b{u}\in\b{X}^{n,r}$ being written as \eqref{E:b{u}})
\[
\pMX{I_r}{Z}{}{I_r}\b{u}\pMX{I_r}{-Z}{}{I_r}
=
\begin{pmatrix}
I_{n+1}&c&&\\
&I_{\ell}&&\\
&&I_{\ell}&a\\
&&&I_{n+1}
\end{pmatrix}
\begin{pmatrix}
I_{n+1}&&&\\
0&I_{\ell}&&\\
A&B'&I_{\ell}&\\
0&C&0&I_{n+1}
\end{pmatrix}
=
b\b{u}''
\]
where 
\[
A=(A'\,\,A''),\quad 
C=\begin{pmatrix}C''\\C'\end{pmatrix}
\quad\text{and}\quad
B'
=
B
+
A
\begin{pmatrix}
-x''_2&0&x''_2y''_1\\
0&0&0\\
0&0&-y''_1
\end{pmatrix}
C
\]
with the center $0\in\M_{n'',n''}$. Note that $\b{u}''\in\b{X}^{n,r}$. To explain $c$ and $a$, we further denote 
\[
A''=(A''_1\,\,A''_2)
\quad\text{and}\quad
C''
=
\begin{pmatrix}
C''_1\\C''_2
\end{pmatrix}
\]
where $A''_2$ is the last column of $A''$, and $C''_1$ is the first row of $C''$. Then
\[
c
=
\begin{pmatrix}
-x''_2C'+x''_2y''_1C''_1\\
0\\
-y''_1C'
\end{pmatrix}
\quad\text{and}\quad
a
=
\left(
A'x''_2\quad 0\quad A''_2x''_1-A'x''_2y''_1
\right)
\]
where the center zeros of $z$ and $t$ belong to $\M_{n'',\ell}$ and $\M_{\ell, n''}$, respectively. Now we decompose
\[
b
=
\begin{pmatrix}
I_{r''}&0&0&c'&0&0&0&0\\
&I_{n''}&0&0&0&0&0&0\\
&&1&0&0&0&0&0\\
&&&I_{\ell}&0&0&0&0\\
&&&&I_{\ell}&0&0&a'\\
&&&&&1&0&0\\
&&&&&&I_{n''}&0\\
&&&&&&&I_{r''}
\end{pmatrix}
\begin{pmatrix}
I_{r''}&0&0&0&0&0&0&0\\
&I_{n''}&0&0&0&0&0&0\\
&&1&c''&0&0&0&0\\
&&&I_{\ell}&0&0&0&0\\
&&&&I_{\ell}&a''&0&0\\
&&&&&1&0&0\\
&&&&&&I_{n''}&0\\
&&&&&&&I_{r''}
\end{pmatrix}
=
b'b''
\]
where
\[
c'=-x''_2C'+x''_2y''_1C''_1,
\quad
c''=-y''_1C'
\quad\text{and}\quad
a'=A''_2x''_1-A'x''_2y''_1,
\quad
a''=A'x''_2
\]
so that 
\[
\b{u}\pMX{I_r}{-Z}{}{I_r}
=
\pMX{I_r}{-Z}{}{I_r}b'b''\b{u}''.
\]
Then since 
\[
u'\pMX{I_r}{-Z}{}{I_r}=\pMX{I_r}{-Z}{}{I_r}u'
\quad
\text{and} 
\quad
w^{-1}\pMX{I_r}{-Z}{}{I_r}w
=
\begin{pmatrix}
I_{n''}&0&0&0&0&0&0&0\\
&1&0&0&y''_1&0&0&0\\
&&I_\ell&0&0&0&0&0\\
&&&I_{r''}&-x''_2y''_2&0&-x''_2&0\\
&&&&I_{r''}&0&0&0\\
&&&&&I_\ell&0&0\\
&&&&&&1&0\\
&&&&&&&I_{n''}
\end{pmatrix}
\]
we find that
\[
\phi_s\left(w^{-1}u'\b{u}\pMX{I_r}{-Z}{}{I_r}\jmath^{n,r}(zg), I_{2r''}, I_{M_P}\right)
=
\phi_s\left(w^{-1}u'b'b''\b{u}''\jmath^{n,r}(zg), I_{2r''}, I_{M_P}\right).
\]
As $b'\in N'_{P'}$, we can change the variables $\b{u}''\mapsto \b{u}$ and $u'b'\mapsto u'$ in the integral 
\eqref{E:id for 1 step 2} to obtain
\begin{align}\label{E:id for 1 step 3}
\begin{split}
\int_{V_{\GL_{2n+1}}\backslash\GL_{2n+1}}&W_v(zg)
\int_{\b{X}^{n,r}}\psi^{-1}_{\b{X}^{n,r}}(\b{u})
\int_{\h{N}'_{r'',r'}}\int_{\M_{r'',n''}}\int_{\M_{n'',r''}}\int_{\M_{r'',1}}\int_{\M_{1,r''}}\\
&\phi_s\left(
w^{-1}u'b''\b{u}\jmath^{n,r}(zg), I_{2r''}, I_{M_P}\right)
\psi^{-1}\left(A'_2x''_2+y''_1C'_1\right)dy''_1dx''_2dy''_2dx''_1du'd\b{u}dg.
\end{split}
\end{align}

\subsubsection{}
To accomplish the first step, observe that  
\[
u'b''=b''u'
\quad\text{and}\quad
w^{-1}b''w
=
\begin{pmatrix}
I_{n''}&0&0&0&0&0&0&0\\
&1&-y''_1C'&0&0&0&0&0\\
&&I_\ell&0&0&0&0&0\\
&&&I_{r''}&0&0&0&0\\
&&&&I_{r''}&0&0&0\\
&&&&&I_\ell&A'x''_2&0\\
&&&&&&1&0\\
&&&&&&&I_{n''}
\end{pmatrix}.
\]
These imply
\[
\phi_s\left(w^{-1}u'b''\b{u}\jmath^{n,r}(zg), I_{2r''}, I_{M_P}\right)\psi^{-1}\left(A'_2x''_2+y''_1C'_1\right)
=
\phi_s\left(w^{-1}u'\b{u}\jmath^{n,r}(zg), I_{2r''}, I_{M_P}\right)
\]
and hence the integral \eqref{E:id for 1 step 3} can be further changed into 
\[
\int_{V'_{\GL_{2n+1}}V^\Diamond_{\GL_{2n''+1}}\backslash\GL_{2n+1}}W_v(g)
\int_{\b{X}^{n,r}}\psi^{-1}_{\b{X}^{n,r}}(\b{u})
\int_{\h{N}'_{r'',r'}}
\phi_s\left(w^{-1}u'\b{u}\jmath^{n,r}(g),I_{2r''}, I_{M_P}\right)du'd\b{u}dg
\]
which is what we want. This finishes the first step.

\subsubsection*{Step 2}
The second step is show that the integral \eqref{E:id for 1 step} obtained in the first step can be written as 
\begin{align}\label{E:id for 2 step}
\begin{split}
\int_{V'_{\GL_{2n+1}}V^\Diamond_{\GL_{2n''+1}}\b{X}_{n,r''}\backslash\GL_{2n+1}}\int_{\b{X}_{n,r''}}&W_v(\b{x}g)
\int_{\b{X}^{n,r}}\psi^{-1}_{\b{X}^{n,r}}(\b{u})\\
&\int_{\h{N}'_{r'',r'}}
\phi_s\left(w^{-1}u'\b{u}\jmath^{n,r}(g),I_{2r''}, I_{M_P}\right)du'd\b{u}d\b{x}dg.
\end{split}
\end{align}
Here the $dg$ integration should be understood in the sense of the Iwasawa decomposition. We first factor the 
$dg$ integration in \eqref{E:id for 1 step} through $\b{X}_{n,r''}$ to obtain
\begin{align}\label{E:id for 2 step 1}
\begin{split}
\int_{V'_{\GL_{2n+1}}V^\Diamond_{\GL_{2n''+1}}\b{X}_{n,r''}\backslash\GL_{2n+1}}\int_{\b{X}_{n,r''}}&W_v(\b{x}g)
\int_{\b{X}^{n,r}}\psi^{-1}_{\b{X}^{n,r}}(\b{u})\\
&\int_{\h{N}'_{r'',r'}}
\phi_s\left(w^{-1}u'\b{u}\jmath^{n,r}(\b{x})\jmath^{n,r}(g),I_{2r''}, I_{M_P}\right)du'd\b{u}d\b{x}dg.
\end{split}
\end{align}
To proceed, we claim that 
\[
\b{u}':=\jmath^{n,r}(\b{x})^{-1}\b{u}\jmath^{n,r}(\b{x})\in\b{X}^{n,r}
\quad\text{and}\quad
\psi_{\b{X}^{n,r}}(\b{u}')=\psi_{\b{X}^{n,r}}(\b{u}).
\]
For this, let us denote
\[
\b{x}
=
\begin{pmatrix}
I_{r''}\\
x_1&I_{n''}\\
0&0&1\\
0&0&0&I_{n''}\\
0&0&0&x_2&I_{r''}
\end{pmatrix}
\in\b{X}_{n,r''}.
\]
Then we have 
\[
\jmath^{n,r}(\b{x})
=
\begin{pmatrix}
X_1\\
&I_{\ell}\\
&&I_{\ell}\\
&&&X_2
\end{pmatrix}
\quad\text{with}\quad
X_1
=
\begin{pmatrix}
I_{r''}\\
x_1&I_{n''}\\
0&0&1
\end{pmatrix}
\quad\text{and}\quad
X_2
=
\begin{pmatrix}
1\\
0&I_{n''}\\
0&x_2&1_{r''}
\end{pmatrix}.
\]
On the other hand, recall that 
\[
\b{u}
=
\begin{pmatrix}
I_{n+1}\\
0&I_\ell\\
A&B&I_\ell\\
0&C&0&I_{n+1}
\end{pmatrix}
\quad\text{and}\quad
\psi_{\b{X}^{n,r}}(\b{u})
=
\psi\left(A_{\ell,n+1}-C_{1,1}\right).
\]
One can verify that 
\[
\b{u}'
=
\begin{pmatrix}
I_{n+1}\\
0&I_\ell\\
AX_1&B&I_\ell\\
0&X^{-1}_2C&0&I_{n+1}
\end{pmatrix}
\quad\text{and}\quad
(AX_1)_{\ell,n+1}=A_{\ell,n+1},\quad (X_2^{-1}C)_{1,1}=C_{1,1}.
\]
These prove the claim, and hence the integral \eqref{E:id for 2 step 1} can be altered to
\begin{align}\label{E:id for 2 step 2}
\begin{split}
\int_{V'_{\GL_{2n+1}}V^\Diamond_{\GL_{2n''+1}}\b{X}_{n,r''}\backslash\GL_{2n+1}}\int_{\b{X}_{n,r''}}&W_v(\b{x}g)
\int_{\b{X}^{n,r}}\psi^{-1}_{\b{X}^{n,r}}(\b{u})\\
&\int_{\h{N}'_{r'',r'}}
\phi_s\left(w^{-1}u'\jmath^{n,r}(\b{x})\b{u}\jmath^{n,r}(g),I_{2r''}, I_{M_P}\right)du'd\b{u}d\b{x}dg
\end{split}
\end{align}
after changing the variable $\b{u}'\mapsto\b{u}$ and employing the fact that $d\b{u}'=d\b{u}$.

\subsubsection{}
We continue to compute (with $u'\in N'_{P'}$ as \eqref{E:u'u''})
\[
\jmath^{n,r}(\b{x})^{-1}u'\jmath^{n,r}(\b{x})
=
\begin{pmatrix}
I_{r''}&0&0&0&0&0&0&0\\
&I_{n''}&0&-x_1x'&0&0&0&0\\
&&1&0&0&0&0&0\\
&&&I_\ell&0&0&0&0\\
&&&&I_\ell&0&y'x_2&0\\
&&&&&1&0&0\\
&&&&&&I_{n''}&0\\
&&&&&&&I_{r''}
\end{pmatrix}
u'.
\]
At this point, let us denote 
\[
z_1
=
\begin{pmatrix}
I_{r''}&0&0&0\\
&I_{n''}&0&-x_1x'\\
&&1&0\\
&&&I_\ell
\end{pmatrix}
\quad\text{and}\quad
z_2
=
\begin{pmatrix}
I_\ell&0&y'x_2&0\\
&1&0&0\\
&&I_{n''}&0\\
&&&I_{r''}
\end{pmatrix}
\]
so that 
\[
u'\jmath^{n,r}(\b{x})
=
\jmath^{n,r}(\b{x})\pMX{z_1}{}{}{z_2}u'.
\]
On the other hand, a simple calculation shows
\[
w^{-1}\jmath^{n,r}(\b{x})w
=
\begin{pmatrix}
I_{n''}&0&0&x_1&0&0&0&0\\
&1&0&0&0&0&0&0\\
&&I_\ell&0&0&0&0&0\\
&&&I_{r''}&0&0&0&0\\
&&&&I_{r''}&0&0&x_2\\
&&&&&I_\ell&0&0\\
&&&&&&1&0\\
&&&&&&&I_{n''}
\end{pmatrix}
\]
and this implies
\[
\phi_s\left(w^{-1}u'\jmath^{n,r}(\b{x})\b{u}\jmath^{n,r}(g),I_{2r''}, I_{M_P}\right)
=
\phi_s\left(w^{-1}\pMX{z_1}{}{}{z_2}u'\b{u}\jmath^{n,r}(g),I_{2r''}, I_{M_P}\right).
\]
Since 
\[
w^{-1}\pMX{z_1}{}{}{z_2}w
=
\begin{pmatrix}
I_{n''}&0&-x_1x'&0&0&0&0&0\\
&1&0&0&0&0&0&0\\
&&I_\ell&0&0&0&0&0\\
&&&I_{r''}&0&0&0&0\\
&&&&I_{r''}&0&0&0\\
&&&&&I_\ell&0&y'x_2\\
&&&&&&1&0\\
&&&&&&&I_{n''}
\end{pmatrix}
\]
we further get that 
\[
\phi_s\left(w^{-1}\pMX{z_1}{}{}{z_2}u'\b{u}\jmath^{n,r}(g),I_{2r''}, I_{M_P}\right)
=
\phi_s\left(w^{-1}u'\b{u}\jmath^{n,r}(g),I_{2r''}, I_{M_P}\right).
\]
From these, we see that the last integral in \eqref{E:id for 2 step 2} becomes
\begin{align*}
\int_{V'_{\GL_{2n+1}}V^\Diamond_{\GL_{2n''+1}}\b{X}_{n,r''}\backslash\GL_{2n+1}}\int_{\b{X}_{n,r''}}&W_v(\b{x}g)
\int_{\b{X}^{n,r}}\psi^{-1}_{\b{X}^{n,r}}(\b{u})\\
&\int_{\h{N}'_{r'',r'}}
\phi_s\left(w^{-1}u'\b{u}\jmath^{n,r}(g),I_{2r''}, I_{M_P}\right)du'd\b{u}d\b{x}dg
\end{align*}
which is exactly the integration \eqref{E:id for 2 step}. This completes the second step.\\

\subsubsection*{Step 3}
The last step is to establish \eqref{E:main id for main'''}. We first factor the $dg$ integration in \eqref{E:id for 2 step} through 
$\jmath_{n,r''}(\GL_{2r''})$ to obtain
\begin{align}\label{E:id for 3 step 1}
\begin{split}
\int_{V''_{\GL_{2n+1}}V^\Diamond_{\GL_{2n''+1}}\b{X}_{n,r''}\jmath_{n,r''}(\GL_{2r''})\backslash\GL_{2n+1}}
&\int_{V_{\GL_{2r''}}\backslash\GL_{2r''}}
\int_{\b{X}_{n,r''}}
W_v(\b{x}\jmath_{n,r''}(h)g)\int_{\b{X}^{n,r}}\psi^{-1}_{\b{X}^{n,r}}(\b{u})\\
&\quad\int_{\h{N}'_{r'',r'}}
\phi_s\left(w^{-1}u'\b{u}\jmath^{n,r}(\jmath_{n,r''}(h)g),I_{2r''}, I_{M_P}\right)du'd\b{u}d\b{x}dhdg,
\end{split}
\end{align}
where $V''_{\GL_{2n+1}}$ is the subgroup of $V'_{\GL_{2n+1}}$ consisting of matrices of the form
\[
\begin{pmatrix}
I_{r''}&0&0&*&0\\
&I_{n''}&0&0&*\\
&&1&0&0\\
&&&I_{n''}&0\\
&&&&I_{r''}
\end{pmatrix}.
\]
Again, the $dg$ integration in \eqref{E:id for 3 step 1} should be understood in the sense of the Iwasawa decomposition. 
We then compute (with $u'$ and $\b{u}$ given by \eqref{E:u'u''} and \eqref{E:b{u}} respectively)
\[
w^{-1}u'w
=
\begin{pmatrix}
I_{n''+1}\\
0&I_\ell\\
0&x'&I_{r''}\\
0&0&0&I_{r''}\\
0&0&0&y'&I_\ell\\
0&0&0&0&0&I_{n''+1}
\end{pmatrix}
\quad\text{and}\quad
w^{-1}\b{u}w
=
\begin{pmatrix}
I_{n''+1}\\
0&I_\ell\\
0&0&I_{r''}\\
0&C'&0&I_{r''}\\
A''&B&A'&0&I_\ell\\
0&C''&0&0&0&I_{n''+1}
\end{pmatrix}.
\]
These imply
\begin{equation}\label{E:b{Y}}
w^{-1}u'\b{u}w
=
\begin{pmatrix}
I_{n''+1}\\
0&I_\ell\\
0&x'&I_{r''}\\
0&C'&0&I_{r''}\\
A''&B'&A'&y'&I_\ell\\
0&C''&0&0&0&I_{n''+1}
\end{pmatrix}
\quad\text{with}\quad
B'=B+y'C'.
\end{equation}
At this point, let us denote $\b{Y}$ as the subgroup of $\GL_{2r}$ consisting of the matrices similar to those in \eqref{E:b{Y}}, 
but with $B'$ replaced by arbitrary matrices $B\in\M_{\ell,\ell}$. We also adapt $\psi_{\b{X}^{n,r}}$ to be the character 
$\psi_{\b{Y}}$ of $\b{Y}$. More concretely, if $\b{y}\in\b{Y}$ is given by \eqref{E:b{Y}} (with $B'$ being replaced by $B$), then
\[
\psi_{\b{Y}}(\b{y})
=
\psi\left(
(A'')_{n''+1, \ell}-(C'')_{1,1}
\right).
\]
With these, we see that \eqref{E:id for 3 step 1} becomes
\begin{align}\label{E:id for 3 step 2}
\begin{split}
&\int_{V'_{\GL_{2n+1}}V^\Diamond_{\GL_{2n''+1}}\b{X}_{n,r''}\jmath_{n,r''}(\GL_{2r''})\backslash\GL_{2n+1}}
\int_{\b{Y}}\psi^{-1}_{\b{Y}}(\b{y})\\
&\quad\quad\quad\quad
\int_{V_{\GL_{2r''}}\backslash\GL_{2r''}}
\int_{\b{X}_{n,r''}}
W_v(\b{x}\jmath_{n,r''}(h)g)
\phi_s\left(\b{y}w^{-1}\jmath^{n,r}(\jmath_{n,r''}(h)g),I_{2r''}, I_{M_P}\right)d\b{x}dhd\b{y}dg.
\end{split}
\end{align}

\subsubsection{}
To proceed, let $h=\pMX{a}{b}{c}{d}\in\GL_{2r''}$ with $a,b,c,d\in\M_{r'',r''}$. We have
\[
\jmath^{n,r}\left(\jmath_{n,r''}(h)\right)
=
\begin{pmatrix}
a&&&b\\
&I_{r'}&&\\
&&I_{r'}&\\
c&&&d
\end{pmatrix}
\]
and a simple calculation gives
\[
w^{-1}\jmath^{n,r}\left(\jmath_{n,r''}(h)\right)w
=
\begin{pmatrix}
I_{r'}\\
&h\\
&&I_{r'}
\end{pmatrix}.
\]
Now the key observation is that the subgroup $\b{Y}$, the character $\psi_{\b{Y}}$ and the Haar measure $d\b{y}$ are
invariant under the conjugation of $w^{-1}\jmath^{n,r}\left(\jmath_{n,r''}(h)\right)w$. It follows that 
\[
\phi_s\left(\b{y}w^{-1}\jmath^{n,r}(\jmath_{n,r''}(h)g),I_{2r''}, I_{M_P}\right)
=
\phi_s\left(\b{y}w^{-1}\jmath^{n,r}(g),h, I_{M_P}\right)
\]
and hence the integral \eqref{E:id for 3 step 2} can be changed to
\begin{align}\label{E:id for 3 step 3}
\begin{split}
&\int_{V'_{\GL_{2n+1}}V^\Diamond_{\GL_{2n''+1}}\b{X}_{n,r''}\jmath_{n,r''}(\GL_{2r''})\backslash\GL_{2n+1}}
\int_{\b{Y}}\psi^{-1}_{\b{Y}}(\b{y})\\
&\quad\quad\quad\quad
\int_{V_{\GL_{2r''}}\backslash\GL_{2r''}}
\int_{\b{X}_{n,r''}}
W_v(\b{x}\jmath_{n,r''}(h)g)
\phi_s\left(\b{y}w^{-1}\jmath^{n,r}(g),h, I_{M_P}\right)d\b{x}dhd\b{y}dg.
\end{split}
\end{align}
Since the inner integral
\[
\int_{V_{\GL_{2r''}}\backslash\GL_{2r''}}
\int_{\b{X}_{n,r''}}
W_v(\b{x}\jmath_{n,r''}(h)g)
\phi_s\left(\b{y}w^{-1}\jmath^{n,r}(g),h, I_{M_P}\right)d\b{x}dh
\]
in \eqref{E:id for 3 step 3} represents the Rankin-Selberg integral attached to $\pi$ and $\tau''$ for every fixed 
$g\in\GL_{2n+1}$ and $\b{y}\in\b{Y}$, we conclude that \eqref{E:main id for main'''} holds when $r''\le n<r$ by the same 
argument used in the proof of the case $n<r''$.

\subsection{Case $r\le n$}
By \eqref{E:RS int n>r} and \eqref{E:f_xi 2}, the Rankin-Selberg integral $\Psi_{n,r}(v\ot\xi_s)$ is equal to
\begin{align}\label{E:id for n>r 1}
\int_{V_{\GL_{2r}}\backslash\GL_{2r}}
\int_{\b{X}_{n,r}}
\int_{\h{N}_{r'',r'}}
W_v\left(\b{x}\jmath_{n,r}(h)\right)
\phi_s\left(w^{-1}uh, I_{2r''}, I_{M_P}\right)\psi^{-1}_{\h{N}_{r'',r'}}(u)dud\b{x}dh.
\end{align}
Notice that $\jmath_{n,r}(u)\in V_{\GL_{2n+1}}$ for $u\in\h{N}_{r'',r'}$ and 
$\psi_{\U_{2n+1}}(\jmath_{n,r}(u))=\psi^{-1}_{\h{N}_{r'',r'}}(u)$; it follows that
\[
W_v(\jmath_{n,r}(u)h)
=
\psi_{\U_{2n+1}}(\jmath_{n,r}(u))W_v(h)
=
\psi^{-1}_{\h{N}_{r'',r'}}(u)W_v(u)
\]
and hence the integral \eqref{E:id for n>r 1} can be written as
\begin{align}\label{E:id for n>r 2}
\int_{V_{\GL_{2r}}\backslash\GL_{2r}}
\int_{\b{X}_{n,r}}
\int_{\h{N}_{r'',r'}}
W_v\left(\jmath_{n,r}(u)\b{x}\jmath_{n,r}(h)\right)
\phi_s\left(w^{-1}uh, I_{2r''}, I_{M_P}\right)dud\b{x}dh.
\end{align}
We then claim that 
\[
\b{x}':=\jmath_{n,r}(u)\b{x}\jmath_{n,r}(u)^{-1}\in\b{X}_{n,r}
\quad\text{and}\quad
d\b{x}'=d\b{x}.
\]
For this, let us denote 
\begin{equation}\label{E:b{x}}
\b{x}
=
\begin{pmatrix}
I_{r''}\\
0&I_{r'}\\
A_1&A_1&I_{n-r}\\
0&0&0&1\\
0&0&0&0&I_{n-r}\\
0&0&0&0&B_2&I_{r'}\\
0&0&0&0&B_1&0&I_{r''}
\end{pmatrix}
\quad\text{and}\quad
u
=
\begin{pmatrix}
I_{r''}&x&0&0&0&0&0\\
&I_{r'}&0&0&0&0&0\\
&&I_{n-r}&0&0&0&0\\
&&&1&0&0&0\\
&&&&I_{n-r}&0&0\\
&&&&&I_{r'}&y\\
&&&&&&I_{r''}
\end{pmatrix}.
\end{equation}
Then we have 
\[
\b{x}'
=
\begin{pmatrix}
I_{r''}\\
0&I_{r'}\\
A_1&A'_2&I_{n-r}\\
0&0&0&1\\
0&0&0&0&I_{n-r}\\
0&0&0&0&B'_2&I_{r'}\\
0&0&0&0&B_1&0&I_{r''}
\end{pmatrix}
\quad\text{with}\quad
A'_2=A_2-A_1x
\quad\text{and}\quad
B'_2=B_2+yB_1.
\]
This proves the claim. From this, we see that the integral \eqref{E:id for n>r 2} can be altered to
\begin{align}\label{E:id for n>r 3}
\begin{split}
&\int_{V_{\GL_{2r}}\backslash\GL_{2r}}
\int_{\b{X}_{n,r}}
\int_{\h{N}_{r'',r'}}
W_v\left(\b{x}\jmath_{n,r}(uh)\right)
\phi_s\left(w^{-1}uh, I_{2r''}, I_{M_P}\right)dud\b{x}dh\\
&\quad\quad\quad\quad=
\int_{V_{M_{P'}}N_{Q_{2r}}\backslash\GL_{2r}}
\int_{\b{X}_{n,r}}
W_v\left(\b{x}\jmath_{n,r}(h)\right)
\phi_s\left(w^{-1}h, I_{2r''}, I_{M_P}\right)d\b{x}dh
\end{split}
\end{align}
after changing the variable $\b{x}'\mapsto\b{x}$. Here $V_{M_{P'}}:=M_{P'}\cap V_{\GL_{2r}}$ and we recall that 
$Q_{2r}$ (resp. $P'$) is the standard parabolic subgroup of $\GL_{2r}$ whose Levi subgroup 
$M_{Q_{2r}}\cong\GL_r\,\x\,\GL_r$ (resp. $\GL_{r''}\,\x\,\GL_{r'}\,\x\,\GL_{r'}\,\x\,\GL_{r''}$).

\subsubsection{}
At this point, let us put $\check{N}_{r'',r'}$ to be the subgroup of $\GL_{2r}$ consisting of the matrices of the form
\[
\begin{pmatrix}
I_{r''}\\
*&I_{r'}\\
0&0&I_{r'}\\
0&0&*&I_{r''}
\end{pmatrix}.
\]
We then factor the $dh$ integration in \eqref{E:id for n>r 3} through $\check{N}_{r'',r'}$ to obtain
\begin{align}\label{E:id for n>r 4}
\int_{V_{M_{P'}}N_{Q_{2r}}\check{N}_{r'',r'}\backslash\GL_{2r}}
\int_{\b{X}_{n,r}}
\int_{\check{N}_{r'',r'}}
W_v\left(\b{x}\jmath_{n,r}(\b{y}h)\right)
\phi_s\left(w^{-1}\b{y}h, I_{2r''}, I_{M_P}\right)d\b{y}d\b{x}dh.
\end{align}
As before, the $dh$ integration should be understood in the sense of the Iwasawa decomposition. Note that if 
\[
\b{y}
=
\begin{pmatrix}
I_{r''}\\
y_1&I_{r'}\\
0&0&I_{r'}\\
0&0&y_2&I_{r''}
\end{pmatrix}\in\check{N}_{r'',r'}
\]
then 
\[
w^{-1}\b{y}w
=
\begin{pmatrix}
I_{r''}&y_1&0&0\\
0&I_{r'}&0&0\\
&&I_{r'}&y_2\\
&&&I_{r''}
\end{pmatrix}.
\]
This implies
\[
\phi_s\left(w^{-1}\b{y}h, I_{2r''}, I_{M_P}\right)
=
\phi_s\left(w^{-1}h, I_{2r''}, I_{M_P}\right).
\]
so that the integral \eqref{E:id for n>r 4} becomes
\begin{align}\label{E:id for n>r 5}
\int_{V_{M_{P'}}N_{Q_{2r}}\check{N}_{r'',r'}\backslash\GL_{2r}}
\int_{\b{X}_{n,r}}
\int_{\check{N}_{r'',r'}}
W_v\left(\b{x}\jmath_{n,r}(\b{y}h)\right)
\phi_s\left(w^{-1}h, I_{2r''}, I_{M_P}\right)d\b{y}d\b{x}dh.
\end{align}

\subsubsection{}
We proceed to compute $\b{x}\jmath_{n,r}(\b{y}y)$. Let $\b{x}\in\b{X}_{n,r}$ be as in \eqref{E:b{x}} and
 $\b{y}\in\check{N}_{r'',r'}$ be as above. We have
\[
\jmath_{n,r}(\b{y})
=
\begin{pmatrix}
I_{r''}\\
y_1&I_{r'}\\
0&0&I_{n-r}\\
0&0&0&1\\
0&0&0&0&I_{n-r}\\
0&0&0&0&0&I_{r'}\\
0&0&0&0&0&y_2&I_{r''}
\end{pmatrix}.
\]
It then follows from a direct calculation that 
\[
\b{x}\jmath_{n,r}(\b{y})
=
\begin{pmatrix}
I_{r''}\\
y_1&I_{r'}\\
A_1&A_2&I_{n-r}\\
0&0&0&1\\
0&0&0&0&I_{n-r}\\
0&0&0&0&B_2&I_{r'}\\
0&0&0&0&B'_1&y_2&I_{r''}
\end{pmatrix}
\quad\text{with}\quad
B'_1=B_1+y_2B_2.
\] 
We can decompose $\b{x}\jmath_{n,r}(\b{y})$ as
\[
\begin{pmatrix}
I_{r''}\\
y_1&I_{r'}\\
A_1&0&I_{n-r}\\
0&0&0&1\\
0&0&0&0&I_{n-r}\\
0&0&0&0&0&I_{r'}\\
0&0&0&0&B_1&y_2&I_{r''}
\end{pmatrix}
\begin{pmatrix}
I_{r''}\\
0&I_{r'}\\
0&A_2&I_{n-r}\\
0&0&0&1\\
0&0&0&0&I_{n-r}\\
0&0&0&0&B_2&I_{r'}\\
0&0&0&0&0&0&I_{r''}
\end{pmatrix}
=\b{x}''\b{y}'.
\]
Observe that $\b{x}''\in\b{X}_{n,r''}$. On the other hand, we let $\check{N}'_{r'',r'}$ be the subgroup of $\GL_{2n+1}$ 
consisting of the matrices $\b{y}'$ (as $A_2, B_2$ vary). Now the integral \eqref{E:id for n>r 5} can be written as
\begin{align}\label{E:id for n>r 6}
\int_{V_{M_{P'}}N_{Q_{2r}}\check{N}_{r'',r'}\backslash\GL_{2r}}
\int_{\check{N}'_{r'',r'}}
\int_{\b{X}_{n,r''}}
W_v\left(\b{x}''\b{y}'\jmath_{n,r}(h)\right)
\phi_s\left(w^{-1}h, I_{2r''}, I_{M_P}\right)d\b{x}''d\b{y}'dh.
\end{align}

\subsubsection{}
The last step is to factor the $dh$ integration in \eqref{E:id for n>r 6} through $\imath_{r,r''}(\GL_{2r''})$:
\begin{align}\label{E:id for n>r 7}
\begin{split}
&\int_{V'_{M_{P'}}N'_{Q_{2r}}\check{N}_{r'',r'}\jmath_{r,r''}(\GL_{2r''})\backslash\GL_{2r}}
\int_{\check{N}'_{r'',r'}}\\
&\quad\quad\quad\quad
\int_{V_{\GL_{2r''}}\backslash\GL_{2r''}}
\int_{\b{X}_{n,r''}}
W_v\left(\b{x}''\b{y}'\jmath_{n,r}\left(\imath_{r,r''}(h')h\right)\right)
\phi_s\left(w^{-1}\imath_{r,r''}(h')h, I_{2r''}, I_{M_P}\right)d\b{x}''dh'd\b{y}'dh.
\end{split}
\end{align}
Here $V'_{M_{P'}}$ is the subgroup of $V_{M_{P'}}$ consisting of the matrices of the form
\[
\begin{pmatrix}
I_{r''}\\
&z\\
&&z'\\
&&&I_{r''}
\end{pmatrix}
\]
for $z,z'\in V_{\GL_{2r'}}$, $N'_{Q_{2r}}$ is the subgroup of $N_{Q_{2r}}$ consisting of the matrices of the form
\[
\begin{pmatrix}
I_{r''}&0&*&0\\
&I_{r'}&*&*\\
&&I_{r'}&0\\
&&&I_{r''}
\end{pmatrix}
\]
and $\imath_{r,r''}:\GL_{2r''}\hookto\GL_{2r}$ is the embedding given by 
\[
h'=\pMX{a}{b}{c}{d}
\mapsto
\begin{pmatrix}
a&&b\\
&I_{2r'}&\\
c&&d
\end{pmatrix}
\quad\text{where}\quad
a,b,c,d\in\M_{r'',r''}.
\] 
Since 
\[
w^{-1}\imath_{r,r''}(h)w
=
\begin{pmatrix}
I_{r'}\\
&h'\\
&&I_{r'}
\end{pmatrix}
\]
we see that 
\[
\phi_s\left(w^{-1}\imath_{r,r''}(h')h, I_{2r''}, I_{M_P}\right)
=
\phi_s\left(w^{-1}h, h', I_{M_P}\right).
\]
Moreover, since $\jmath_{n,r}(\imath_{r,r''}(h'))=\jmath_{n,r''}(h')$ and $\b{y}'\jmath_{n,r''}(h')=\jmath_{n,r''}(h')\b{y}'$, we see
that the integral \eqref{E:id for n>r 7} becomes
\begin{align}\label{E:id for n>r 8}
\begin{split}
&\int_{V'_{M_{P'}}N'_{Q_{2r}}\check{N}_{r'',r'}\imath_{r,r''}(\GL_{2r''})\backslash\GL_{2r}}
\int_{\check{N}'_{r'',r'}}\\
&\quad\quad\quad\quad
\int_{V_{\GL_{2r''}}\backslash\GL_{2r''}}
\int_{\b{X}_{n,r''}}
W_v\left(\b{x}''\jmath_{n,r''}(h')\b{y}'\jmath_{n,r}\left(h\right)\right)
\phi_s\left(w^{-1}h, h', I_{M_P}\right)d\b{x}''dh'd\b{y}'dh.
\end{split}
\end{align}
Since the inner integral
\[
\int_{V_{\GL_{2r''}}\backslash\GL_{2r''}}
\int_{\b{X}_{n,r''}}
W_v\left(\b{x}''\jmath_{n,r''}(h')\b{y}'\jmath_{n,r}\left(h\right)\right)
\phi_s\left(w^{-1}h, h', I_{M_P}\right)d\b{x}''dh'
\]
in \eqref{E:id for n>r 8} represents the Rankin-Selberg integral attached to $\pi$ and $\tau''$ for every fixed 
$h\in\GL_{2r}$ and $\b{y}'\in\check{N}'_{r'',r'}$, we conclude that \eqref{E:main id for main'''} holds when $r\le n$ by the 
same argument used in the proof of the case $n<r''$. This conclude the proof of \eqref{E:main id for main'''}, and hence 
the proof of \thmref{T:main'''}.\qed

\section{Archimedean and Minimal cases}\label{S:mini}
In this section, we prove the following proposition.

\begin{prop}\label{P:main'}
\thmref{T:main'} hold when
\begin{itemize}
\item
$n=0$ and $r=1$;
\item
$F$ is archimedean, $E=F\,\x\, F$ and $n\ge r-1$.
\end{itemize}
\end{prop}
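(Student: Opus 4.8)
The overall strategy is to reduce both assertions to the identity $\gamma^{{\rm RS}}(s,\pi\times\tau,\psi)=\gamma^{{\rm WD}}(s,\pi\times\tau,\psi)$, since $\gamma^{{\rm WD}}=\gamma^{{\rm LS}}$ by \thmref{T:main} (whose proof uses none of \assref{SS:assumption}). In both cases the analytic facts needed to make sense of \eqref{E:FE} — absolute convergence for $\Re(s)\gg 0$, meromorphic continuation, and one-dimensionality of the relevant Hom-space — are available: when $n=0$ and $r=1$ one has $n=r-1$, so $\bar X^{0,1}$ is trivial and the results and proofs of \cite{Ben-ArtziSoudry2009}, \cite{MorimotoSoudry2020} apply over any local field of characteristic zero, while in the second case the hypothesis $n\ge r-1$ is exactly what lets us quote \cite[Sections 4--5]{Soudry1995}. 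Thus in both situations $\gamma^{{\rm RS}}(s,\pi\times\tau,\psi)$ is defined by \eqref{E:FE}, \eqref{E:RS gamma}, and it remains to compute it.

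For the minimal case $n=0$, $r=1$, the representations $\pi$ and $\tau$ are characters of the tori $\U_1(F)$ and $\G_1(F)$. Since $\bar X^{0,1}$ is trivial and $\U_1$ has trivial unipotent radical, \eqref{E:RS int n<r} collapses to the rank-one zeta integral $\int_{\U_1(F)}\pi(t)\,f_{\xi_s}(\jmath^{0,1}(t);I_1)\,dt$ of a section $\xi_s\in I_{Q_2}^{\U_2}(\tau_s)$ restricted to the one-parameter torus $\jmath^{0,1}(\U_1)\subset\U_2$. The plan is to run the explicit functional-equation computation attached to $A_{\psi,\delta}(w_{1,1},\tau,s)$, as in the minimal case treated in the references above, and to check that the normalization \eqref{E:RS gamma} — with $\delta=(1,-1)$, $\Delta=1$ and $\omega_{E/F}$ trivial when $E=F\times F$, and with the genuine $\lambda$-factor when $E$ is a field — turns $\Gamma_\delta(s,\pi\times\tau,\psi)$ into Tate's local gamma factor: for $E=F\times F$ this reads $\gamma^{{\rm WD}}(s,\pi\times\tau_1,\psi)\gamma^{{\rm WD}}(s,\tilde\pi\times\tau_2,\psi)$, and for $E$ a field it reads $\lambda_{E/F}(\psi)\,\gamma^{{\rm WD}}(s,{\rm BC}(\pi)\times\tau,\psi_E)$. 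By \lmref{L:L-isom} and the $L$-group description of $\gamma^{{\rm WD}}$ recalled in \secref{S:gamma}, either expression equals $\gamma^{{\rm WD}}(s,\pi\times\tau,\psi)$, hence $\gamma^{{\rm LS}}(s,\pi\times\tau,\psi)$.

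For the archimedean split case ($F$ archimedean, $E=F\times F$, $n\ge r-1$) we have $\U_{2n+1}(F)=\GL_{2n+1}(F)$, $\tau=\tau_1\boxtimes\tau_2$, $\tau_s=\tau_{1,s}\boxtimes\tau_{2,1-s}^*$ and $\rho_{\tau,s}=I^{\GL_{2r}}(\tau_{1,s},\tau_{2,1-s}^*)$. The key point is that here $\Psi_{n,r}$ coincides, up to elementary factors, with the Jacquet--Piatetski-Shapiro--Shalika integral for $\GL_{2n+1}(F)\times\GL_{2r}(F)$ attached to $\pi$ and the Whittaker-type induced representation $\rho_{\tau,s}$, while $A_{\psi,\delta}(w_{r,r},\tau,s)$ is precisely the standard intertwiner interchanging the inducing factors $\tau_{1,s}$ and $\tau_{2,1-s}^*$. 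Applying the multiplicativity of the archimedean $\GL\times\GL$ Rankin--Selberg gamma factors (\cite{JPSS1983}, \cite{Jacquet2009}) along those two factors, and tracking the $|\det|$-twists and the constants in \eqref{E:RS gamma}, one gets $\gamma^{{\rm RS}}(s,\pi\times\tau,\psi)=\gamma^{{\rm WD}}(s,\pi\times\tau_1,\psi)\gamma^{{\rm WD}}(s,\tilde\pi\times\tau_2,\psi)$, via Jacquet's identification of the archimedean $\GL\times\GL$ gamma factor with the Artin gamma factor together with $\gamma^{{\rm WD}}=\gamma^{{\rm LS}}$ for general linear groups (\cite{Shahidi1985}). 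By the $E=F\times F$ case of the $L$-group description in \secref{S:gamma} the right-hand side is $\gamma^{{\rm WD}}(s,\pi\times\tau,\psi)=\gamma^{{\rm LS}}(s,\pi\times\tau,\psi)$, as required.

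The main obstacle is bookkeeping rather than conceptual. In both cases the delicate part is to reconcile the non-standard generic character $\psi_{\U_{2n+1}}$ (with its sign pattern when $E=F\times F$) and the auxiliary character $\psi'$ normalizing $A_{\psi,\delta}$ with the standard Whittaker normalizations of the $\GL\times\GL$ theory — a torus conjugation that contributes explicit constants — and then to verify that these, together with the factor $\omega_\pi(-1)^r\omega_\tau(-1)^n\omega_\tau(\delta)^{-r}|\delta|_E^{-r(s-1/2)}\omega_{E/F}(-1)^{nr}$ from \eqref{E:RS gamma} and anything produced by the computation, conspire to leave exactly $\gamma^{{\rm WD}}$ with no residual exponential or root of unity. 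The analytic input is entirely borrowed from \cite{Soudry1995}, \cite{Ben-ArtziSoudry2009}, \cite{MorimotoSoudry2020}.
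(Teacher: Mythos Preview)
Your strategy of reducing to $\gamma^{{\rm RS}}=\gamma^{{\rm WD}}$ is correct, but the archimedean split argument has a genuine gap. You assert that $\Psi_{n,r}$ coincides, up to elementary factors, with the Jacquet--Piatetski-Shapiro--Shalika integral for $\GL_{2n+1}\times\GL_{2r}$, and then quote archimedean $\GL\times\GL$ multiplicativity. But this identification is not immediate: the embedding $\jmath_{n,r}$ places the $r\times r$ blocks of $h\in\GL_{2r}$ in the four corners of $\GL_{2n+1}$ rather than in the upper-left $2r\times 2r$ block, the extra unipotent $\bar X_{n,r}$ (or $\bar X^{n,r}$ when $n=r-1$) is not the JPSS unipotent, and the Whittaker character $\psi_{\U_{2n+1}}$ carries a non-standard sign pattern. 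Converting $\Psi_{n,r}$ into an honest JPSS integral would require Weyl conjugations and variable changes of exactly the sort carried out in \S\ref{S:2nd}; you cannot simply invoke \cite{JPSS1983}, \cite{Jacquet2009} on the nose.

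The paper sidesteps this by first using \thmref{T:main'''}---whose proof in \S\ref{S:2nd} is valid for archimedean $F$ when $n\ge r-1$---to reduce the second bullet to $r=1$. What remains are three explicit rank-one computations: (i) $n=0$, $r=1$, $E$ a field; (ii) $n=0$, $r=1$, $E=F\times F$; (iii) $n\ge 1$, $r=1$, $E=F\times F$. For (i) and (ii) the key device, absent from your sketch, is the Godement section $f^\varphi_s(-;\tau)$ built from $\varphi\in\cS(F^2)$ (\S\ref{SSS:Godement}): substituting it into $\Psi_{0,1}$ and unwinding through $\jmath^{0,1}$ collapses the integral to a Tate integral on $E^\times$ (resp.\ a product of two on $F^\times$), after which \lmref{L:int on sec}, namely $A_{\psi,\delta}(w_{1,1},\tau,s)f^\varphi_s=\chi_0(-1)f^{\hat\varphi}_{1-s}$, together with an explicit Fourier-transform identity (\eqref{E:FT field}, \eqref{E:FT split}) and Tate's functional equation gives the answer. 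For (iii) one inserts the $\GL_{2n+1}\times\GL_1$ functional equation \eqref{E:FE GL}, conjugated by a diagonal matrix $d$ to match $\psi_{\U_{2n+1}}$, twice inside $\Psi_{n,1}$---once for $\chi_1$ and once for $\chi_2$---the two sides being linked by the defining Jacquet-integral identity for $A_{\psi,\delta}(w_{1,1},\tau,s)$. Your ``plan to run the explicit functional-equation computation'' for the minimal case is the right instinct, but without Godement sections you have no concrete mechanism for landing on a Tate integral.
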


In particular, by combining this with the results obtained in the previous sections, we complete the verification of 
Assumption \ref{SS:assumption}. To prove \propref{P:main'} when $F$ is archimedean, $E=F\,\x\, F$ and $n\ge r-1$, we 
may assume $\tau$ is the generic quotient of an induced representation of $\GL_r(F)$ that is induced from the Borel 
subgroup of $\GL_r(F)$. In view of \thmref{T:main'''} (which holds when $F$ is archimedean as we remarked in the previous 
section), it suffices to prove the case when $r=1$. Consequently, the proof of \propref{P:main'} are separated into the 
following three cases: (i) $n=0$, $r=1$ and $E$ is a field; (ii) $n=0$, $r=1$ and $E=F\,\x\, F$; (iii) $n\ge r=1$ and 
$E=F\,\x\,F$.

\subsection{Preliminaries}
The local field $F$ in this section can be archimedean or non-archimedean. Recall that we have fixed an element 
$\delta\in E^\x$ satisfying $\theta(\delta)=-\delta$ when $E$ is a field, and have put $\Delta=\delta^2\in F^\x$. 
When $F=\mathbb{R}$, we simply take $\delta=i=\sqrt{-1}$. Recall also that $\psi_2$ is the character of $F$ defined by 
$\psi_2(x)=\psi(2x)$.

\subsubsection{An isomorphism}\label{SSS:gp isom}
Suppose that $E$ is a field. We have an isomorphism 
\[
{\rm SL}_2(F)\overset{\sim}{\longto}{\rm SU}_2(F);
\quad h_1\mapsto\pMX{\delta}{}{}{1}h_1\pMX{\delta}{}{}{1}^{-1}.
\]
It then follows from the Hilbert's Satz 90 that every $h\in\U_2(F)$ can be written as
\begin{equation}\label{E:h decomp}
h
=
\pMX{a}{}{}{\theta(a)^{-1}}\pMX{\delta}{}{}{1}h_1\pMX{\delta}{}{}{1}^{-1}
\end{equation}
for some $a\in E^\x$ and $h_1\in{\rm SL}_2(F)$. Notice that $\det(h)=a\theta(a)^{-1}$.

\subsubsection{Induced representations of $\U_2$}
Let $\tau$ be an irreducible representation of $E^\x$. Therefore, $\tau=\chi$ is a character of $E^\x$ if $E$ is a field; while
$\tau$ is of the form $\chi_1\bt\chi_2$ for some characters $\chi_1, \chi_2$ of $F^\x$. In any case, we denote by $\chi_0$
the restriction of $\tau$ to $F^\x$; we thus have $\chi_0=\chi_1\chi_2$ when $E=F\,\x\,F$. Since $Q_2=B_{\U_2}$, the 
induced representation $\rho_{\tau,s}$ defined in \S\ref{SSS:rho_tau} is noting but
\[
I_{B_{\U_2}}^{\U_2}(\chi|\cdot|_E^{s-\frac{1}{2}})
\quad\text{or}\quad
I_{B_{\GL_2}}^{\GL_2}\left(\chi_1|\cdot|_F^{s-\frac{1}{2}}\bt\chi^{-1}_2|\cdot|_F^{\frac{1}{2}-s}\right)
\]
according to $E$ is a field or not. Notice that $\tau^*(a)=\chi(\theta(a))^{-1}$ if $E$ is a field, and 
$\tau^*=\chi_2^{-1}\bt\chi_1^{-1}$ if $E=F\,\x\,F$. Then the intertwining map 
\[
A(w_{1,1}, \tau,s):\cV_{B_{\U_2}}^{\U_2}(\tau_s)\longto\cV_{B_{\U_2}}^{\U_2}(\tau^*_{1-s})
\]
is then given by 
\[
A(w_{1,1},\tau,s)\xi_s(h)
=
|\Delta|_F^{\frac{r}{2}}\int_{V_{\U_2}(F)}\xi_s(w^{-1}_{1,1}uh)du;
\]
whereas the normalized one $A_{\psi,\delta}(w_{1,1},\tau,s)$ is defined to satisfy the following identity
\[
\int_{V_{\U_2}(F)}\xi_s(w_{1,1}uh)\psi'^{-1}(u_{1,2})du
=
\int_{V_{\U_2}(F)}A_{\psi,\delta}(w_{r,r},\tau,s)\xi_s(w_{1,1}uh)\psi'^{-1}(u_{1,2})du.
\]
Here $\psi'$ is the character of $E$ given by $\psi'(x)=\psi_E(\delta x)$ when $E$ is a field, and $\psi'=\psi_2$ when 
$E=F\,\x\, F$. Furthermore, since $V_{\U_2}(F)\cong F$, the Haar measure $du$ is the one that is self-dual with respect to 
$\psi_2$
\subsubsection{Godement sections}\label{SSS:Godement}
Given a Bruhat-Schwartz function $\varphi\in\cS(F^2)$, a complex number $s$ and $h\in\U_2(F)$, we define the 
$Godement$ $section$ $f_s^\phi(h;\tau)$ attached to $\phi$ and $\tau$ by 
\begin{equation}\label{E:GJ section field}
f_s^\phi(h;\tau)
=
\chi(a)|a|_F^s\int_{F^\x}\varphi((0,t)h_1)\chi_0(t)|t|_F^{2s}d^\x t
\end{equation}
if $E$ is a field and $h$ is written as \eqref{E:h decomp}, where $\chi_0$ is the restriction of $\chi$ to $F^\x$.
On the other hand, if $E=F\,\x\, F$, then we define
\begin{equation}\label{E:GJ section split}
f^{\phi}_s(h;\tau)
=
\chi_1({\rm det}(h))|{\rm det}(h)|_F^s\int_{F^\x}\varphi((0,t)h)\chi_1\chi_2(t)|t|_F^{2s}d^\x t.
\end{equation}
Since the integrals \eqref{E:GJ section field} and \eqref{E:GJ section split} are essentially the Tate integrals 
(cf. \cite{Tatethesis}), they converge absolutely for $\Re(s)\gg 0$ and admit the meromorphic continuation to the whole 
complex plane. Moreover, it's not hard to check that 
\[
f^\varphi_s(-;\tau)\in\cV_{B_{\U_2}}^{\U_2}(\tau_s)
\]
whenever the integral defining $f_s^\varphi(-;\tau)$ is defined at $s$ (cf. \cite{JLbook}, \cite{Baruch1997}), and 
$f^\varphi_s(-;\tau)$ is well-defined when $E$ is a field, i.e. independent of the decomposition \eqref{E:h decomp} 
(cf. \cite[Lemma 2.5]{Baruch1997}).\\

To describe the action of the intertwining map $A(w_{1,1},\tau,s)$ on $f_s^{\varphi}(-;\tau)$, we define the (symplectic) 
Fourier transform $\h{\varphi}$ of $\varphi$ by 
\[
\h{\varphi}(x,y)=\int_F\int_F \varphi(z,w)\psi_2(zy-wx)dzdw
\]
where $dz$, $dw$ are the Haar measures on $F$ that are self-dual with respect to $\psi_2$. We have the following lemma.
\begin{lm}\label{L:int on sec}
We have $A_{\psi,\delta}(w_{1,1},\tau,s)f_s^{\varphi}(-;\tau)=\chi_0(-1)f_{1-s}^{\h{\varphi}}(-;\tau^*)$.
\end{lm}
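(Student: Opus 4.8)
The plan is to reduce \lmref{L:int on sec} to a computation about Tate's local functional equation. The point is that the Godement section $f_s^{\varphi}(-;\tau)$ is built from the Tate integral $\int_{F^\x}\varphi((0,t)h_1)\chi_0(t)|t|_F^{2s}\,d^\x t$ (in the split case with $h$ replacing $h_1$ and $\chi_1\chi_2$ replacing $\chi_0$), and the intertwining integral $A(w_{1,1},\tau,s)$ over $V_{\U_2}(F)\cong F$ transforms this into the ``dual'' Tate integral with $\widehat{\varphi}$; the normalization $A_{\psi,\delta}$ is precisely designed so that the Tate $\gamma$-factors cancel and only the constant $\chi_0(-1)$ survives.

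Concretely, first I would unwind both sides at the identity $h=I_2$ (by equivariance it suffices to check the identity there, or more safely along the torus, since both sides lie in $\cV_{B_{\U_2}}^{\U_2}(\tau^*_{1-s})$ and agree iff they agree on a set of representatives for $B_{\U_2}\backslash\U_2$; one can also just invoke that the relevant $\Hom$-space is one-dimensional so it is enough to match a single nonzero value). I would compute $A(w_{1,1},\tau,s)f_s^{\varphi}(-;\tau)$ by substituting the definition \eqref{E:GJ section field} (resp. \eqref{E:GJ section split}), interchanging the $du$-integral over $V_{\U_2}(F)$ with the $d^\x t$-integral, and recognizing the inner integral over $F$ as a partial Fourier transform. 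Using the definition of $\widehat{\varphi}$ as the symplectic Fourier transform with respect to $\psi_2$, the combined integral becomes $|\Delta|_F^{1/2}$ times a Tate integral of $\widehat{\varphi}$ against $\chi_0$, up to an overall constant coming from the change of variables (this is where the factor $\chi_0(-1)$ and any power of $|\delta|$ enter). In the split case $E=F\,\x\, F$ the same manipulation applies with $\Delta=1$ and $\delta=(1,-1)$, and $\chi_0=\chi_1\chi_2$.

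The second half is bookkeeping about the normalization. By the defining identity for $A_{\psi,\delta}(w_{1,1},\tau,s)$ — the equality of the two $\int_{V_{\U_2}(F)}\cdots\psi'^{-1}(u_{1,2})\,du$ integrals — I would pin down the scalar relating $A_{\psi,\delta}(w_{1,1},\tau,s)$ to the unnormalized $A(w_{1,1},\tau,s)$: it is a ratio of local Tate $\gamma$-factors $\gamma(2s,\chi_0,\psi_2)$-type quantities (this is exactly Tate's functional equation $\int \varphi \,\chi_0\,|\cdot|^{2s} = \gamma(\cdots)\int\widehat{\varphi}\,\chi_0^{-1}\,|\cdot|^{1-2s}$, rephrased so that the $\psi'$-twisted Jacquet integral against the section is computed). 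Applying this normalization to the unnormalized computation of the previous paragraph, the $\gamma$-factor that appeared there cancels against the normalizing factor, leaving exactly $\chi_0(-1)f_{1-s}^{\widehat{\varphi}}(-;\tau^*)$. One checks the target section is indeed the Godement section for $\tau^*$: this uses $\tau^*(a)=\chi(\theta(a))^{-1}$ (resp. $\tau^*=\chi_2^{-1}\bt\chi_1^{-1}$), so that the roles of $\chi$ and its conjugate-inverse are swapped, matching \eqref{E:GJ section field}/\eqref{E:GJ section split} with $\tau$ replaced by $\tau^*$ and $s$ by $1-s$.

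The main obstacle I anticipate is tracking the precise constants — the self-dual-measure conventions (measures self-dual for $\psi_2$ versus $\psi'(x)=\psi_E(\delta x)$), the factor $|\Delta|_F^{1/2}$ built into $A(w_{1,1},\tau,s)$, the $|\delta|$-powers, and the sign $\chi_0(-1)$ coming from the symplectic pairing $zy-wx$ versus $zy$ in the Fourier transform — and verifying that all of these collapse to exactly the stated $\chi_0(-1)$ with no leftover $|\delta|$-power. This is genuinely where the choice of $\delta=i$ for $F=\R$ and $\delta=(1,-1)$ for $E=F\,\x\, F$ matters, and where one should lean on the references \cite{Baruch1997}, \cite{JLbook}, and Tate's thesis \cite{Tatethesis} for the normalization of the Godement section and of the local functional equation, rather than recomputing from scratch. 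Once the constant is confirmed, the identity for general $h$ follows either by equivariance under $B_{\U_2}(F)$ together with the one-dimensionality of the intertwining space, or by carrying the same computation through with a general $h$ (using the decomposition \eqref{E:h decomp} in the field case).
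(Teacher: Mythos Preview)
Your proposal is correct in spirit and identifies exactly the right mechanism: the Godement section is a Tate integral, the unnormalized intertwining operator produces the dual Tate integral with $\widehat{\varphi}$ times a Tate $\gamma$-factor, and the normalization cancels that $\gamma$-factor. This is also what the paper does, so the approaches coincide at the conceptual level.

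There is one structural difference worth noting. In the split case the paper does not recompute but simply cites the identity \eqref{E:intertwining map on section} from \cite{GelbartJacquet1979} for the unnormalized map and \cite[Proposition~4.5.9]{Bump1998} for the normalizing Jacquet integral, then combines them. In the field case, rather than carrying out your direct computation, the paper exploits the isomorphism of \S\ref{SSS:gp isom} to observe that $f_s^{\varphi}(h;\tau)=f_s^{\varphi}(h_1;\tau_0)$ with $\tau_0=1\boxtimes\chi_0$ on $\GL_2(F)$, thereby reducing the field-case intertwining computation to the already-handled split formula. This trick absorbs most of the $|\delta|$-bookkeeping you flagged as the main obstacle: the only remaining work is a short direct evaluation of the normalizing integral $\int_{V_{\U_2}(F)}f_s^{\varphi}(w_{1,1}u;\tau)\psi'^{-1}(u_{1,2})\,du$ and its dual, done by unfolding to a double integral over $F^\times\times F$ and using Fourier inversion. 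Your direct route would also succeed, but the paper's reduction-to-split device is what makes the constants fall out cleanly without a case-by-case chase.
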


\begin{proof}
Assume first that $E=F\,\x\,F$. Then the assertion follows from the results in \cite[Section 4.B]{GelbartJacquet1979} and 
\cite[Proposition 4.5.9]{Bump1998}. More concretely, the result in \cite[Section 4.B]{GelbartJacquet1979} implies
\begin{equation}\label{E:intertwining map on section}
\chi_1(-1)\gamma^{{\rm WD}}(2s-1,\chi_1\chi_2,\psi_2)A(w_{1,1},\tau,s)f^\varphi_s(h;\tau)
=
f^{\h{\varphi}}_{1-s}(h;\tau^*).
\end{equation}
On the other side, by \eqref{E:gamma GL 1}, \eqref{E:gamma GL 2} and \cite[Proposition 4.5.9]{Bump1998}, we find that 
\[
\int_{V_{\U_2}(F)}f_s(w_{1,1}uh)\psi_2^{-1}(u_{1,2})du
=
\chi_2(-1)\gamma^{{\rm WD}}(2s-1,\chi_1\chi_2,\psi_2)
\int_{V_{\U_2}(F)}A(w_{r,r},\tau,s)f_s(w_{1,1}uh)\psi^{-1}_2(u_{1,2})du.
\]
Together, we conclude that $A_{\psi,\delta}(w_{1,1},\tau,s)f_s^{\varphi}(-;\tau)=\chi_1\chi_2(-1)f^{\h{\varphi}}_{1-s}(-;\tau^*)$.\\

Next, assume that $E$ is a field. The observation is that, if we let $\tau_0=1\bt\chi_0$ be an irreducible representation of 
$F^\x\,\x\,F^\x$,  where $1$ stands for the trivial character of $F^\x$, then 
\[
f_s^\varphi(h;\tau)
=
f_s^\varphi(h_1;\tau_0)
\]
provided that $h$ is of the form \eqref{E:h decomp}. In other words, we would like to reduce the computation to the split case.
Since 
\[
\pMX{}{1}{1}{}\pMX{1}{\delta x}{}{1}
=
\pMX{\delta^{-1}}{}{}{-\delta}\pMX{\delta}{}{}{1}\pMX{}{1}{-1}{}\pMX{1}{x}{}{1}\pMX{\delta}{}{}{1}^{-1}
\]
where $x\in F$, we find that 
\begin{align*}
A(w_{1,1},\tau,s)f^\phi_s(I_2;\tau)
&=
|\delta|_E^{\frac{1}{2}}\int_{F}
f^\varphi_s\left(\pMX{}{1}{1}{}\pMX{1}{\delta x}{}{1};\tau\right)dx\\
&=
\chi(\delta^{-1})|\delta^{-1}|_E^{s-\frac{1}{2}}\int_{F}
f^\varphi_s\left(\pMX{}{1}{-1}{}\pMX{1}{x}{}{1};\tau_0\right)dx\\
&=
\chi^{-1}(-\delta)|\delta|_E^{-s+\frac{1}{2}}\gamma^{{\rm WD}}(2s-1,\chi_0,\psi_2)^{-1}f_{1-s}^{\h{\varphi}}(I_2;\tau^*_0)\\
&=
\chi_0(-1)|\delta|_E^{-\frac{1}{2}}\gamma^{{\rm WD}}(2s-1,\chi_0,\psi_2)^{-1}f_{1-s}^{\h{\varphi}}
\left(\pMX{\delta}{}{}{-\delta^{-1}};\tau^*\right)
\end{align*}
by \eqref{E:intertwining map on section}. It follows that 
\begin{align}\label{E:intertwining map on section field}
\begin{split}
A(w_{1,1},\tau,s)f^\phi_s(h;\tau)
&=
\chi_0(-1)|\delta|_E^{-\frac{1}{2}}\gamma^{{\rm WD}}(2s-1,\chi_0,\psi_2)^{-1}f_{1-s}^{\h{\varphi}}
\left(\pMX{\delta}{}{}{-\delta^{-1}}h;\tau^*\right)\\
&=
\chi^{-1}(-\delta)|\delta|_E^{-s+\frac{1}{2}}\gamma^{{\rm WD}}(2s-1,\chi_0,\psi_2)^{-1}f_{1-s}^{\h{\varphi}}(h;\tau^*)
\end{split}
\end{align}
for every $h\in\U_2(F)$.\\

We proceed to compute 
\begin{align*}
\int_{V_{\U_2}(F)}f^\varphi_s(w_{1,1}u;\tau)\psi'^{-1}(u_{1,2})du
&=
\int_{F}
f^\varphi_s\left(\pMX{}{1}{1}{}\pMX{1}{\delta x}{}{1};\tau\right)\psi_2^{-1}(\Delta x)dx\\
&=
\chi(\delta^{-1})|\delta^{-1}|_E^s\int_{F}
f^\varphi_s\left(\pMX{}{1}{-1}{}\pMX{1}{x}{}{1};\tau_0\right)\psi_2^{-1}(\Delta x)dx\\
&=
\chi^{-1}(-\delta)|\delta|_E^{-s}\int_{F}\int_{F^\x}
\varphi\left((0,t)\pMX{}{1}{1}{}\pMX{1}{x}{}{1}\right)\chi_0(t)|t|_F^{2s}\psi_2^{-1}(\Delta x)d^\x t\,dx\\
&=
\chi^{-1}(-\delta)|\delta|_E^{-s}\int_{F}\int_{F^\x}
\varphi\left(t,tx\right)\chi_0(t)|t|_F^{2s}\psi_2^{-1}(\Delta x)d^\x t\,dx\\
&=
\chi^{-1}(-\delta)|\delta|_E^{-s}\int_{F}\int_{F^\x}
\varphi\left(t,x\right)\chi_0(t)|t|_F^{2s-1}\psi_2^{-1}(\Delta t^{-1}x)d^\x t\,dx.
\end{align*}
It's clear that the above integral converges absolutely for every $s$. Moreover, by replacing $\phi$ by $\h{\phi}$, $\chi$ by 
$\chi^*$ and $s$ by $1-s$, we find that 
\begin{align*}
\int_{V_{\U_2}(F)}f^{\h{\phi}}_{1-s}(w_{1,1}u;\tau^*)\psi'^{-1}(u_{1,2})du
=
\chi(\delta)|\delta|_E^{s-1}\int_{F}\int_{F^\x}
\h{\phi}\left(t,x\right)\chi^{-1}_0(t)|t|_F^{1-2s}\psi_2^{-1}(\Delta t^{-1}x)d^\x t\,dx.
\end{align*}
By the Fourier inversion formula, we have
\[
\int_{F}
\h{\varphi}\left(t,x\right)\psi_2^{-1}(\Delta t^{-1}x)dx
=
\int_F
\varphi(\Delta t^{-1},x)\psi^{-1}_2(tx)dx.
\]
This implies
\begin{align*}
\int_{V_{\U_2}(F)}f^{\h{\varphi}}_{1-s}(w_{1,1}u;\tau^*)\psi'^{-1}(u_{1,2})du
&=
\chi(\delta)|\delta|_E^{s-1}\int_{F}\int_{F^\x}
\h{\varphi}\left(t,x\right)\chi^{-1}_0(t)|t|_F^{1-2s}\psi_2^{-1}(\Delta t^{-1}x)d^\x t\,dx\\
&=
\chi(\delta)|\delta|_E^{s-1}\int_{F}\int_{F^\x}
\varphi\left(\Delta t^{-1},x\right)\chi^{-1}_0(t)|t|_F^{1-2s}\psi_2^{-1}(tx)d^\x t\,dx\\
&=
\chi(\delta)|\delta|_E^{s-1}\chi_0(\Delta^{-1})|\Delta|_F^{1-2s}\int_{F}\int_{F^\x}
\varphi\left(t,x\right)\chi^{-1}_0(t)|t|_F^{1-2s}\psi_2^{-1}(\Delta t^{-1}x)d^\x t\,dx\\
&=
\chi_0(-1)\int_{V_{\U_2}(F)}f^\varphi_s(w_{1,1}u;\tau)\psi'^{-1}(u_{1,2})du.
\end{align*}
By \eqref{E:intertwining map on section field}, we therefore get that 
$A_{\psi,\delta}(w_{1,1},\tau,s)f^\varphi_s(-;\tau)=\chi_0(-1)f^{\h{\varphi}}_{1-s}(-;\tau^*)$. This completes the proof.
\end{proof}

\subsection{Proof of Case (i)}
In this case, we have $n=0$, $r=1$ and $E$ is a field. By definition, one has
\[
\U_1(F)=E^1:=\stt{a\in E^\x\mid |a|_E=1}.
\] 
On the other hand, the Hilbert's Satz 90 induces the following exact sequence,
\[
1\longto F^\x\overset{\id}{\longto} E^\x\overset{\iota}{\longto} E^1\longto 1
\] 
where $\id$ stands for the identity map; while $\iota(a)=a\theta(a)^{-1}$ for $a\in E^\x$. Therefore, we can identity 
$\U_1(F)$ with $E^\x/F^\x$ via this exact sequence. The representation $\pi$ of $\U_1(F)$ is now a character $\eta_1$ of 
$E^1$, and by pulling back to $E^\x$, we obtain a character $\eta$ of $E^\x$ that is trivial on $F^\x$. More concretely, 
we have 
\[
\eta(a)=\eta_1(a\theta(a)^{-1})
\quad
\text{for $a\in E^\x$}.
\]
Note that $\eta$ is the standard base change of $\eta_1$ to $\GL_1(E)=E^\x$. The representation $\tau$ is also a character 
of $E^\x$, and we denote it by $\chi$ as before.

\subsubsection{}
Let us now begin to prove the \propref{P:main'} for case (i). We should mention that Kaplan also did similar computations in
\cite[Section 6.1]{Kaplan2015}. The goal is establish the following identity: 
\begin{equation}\label{E:n=0, r=1, E is field}
\Gamma_\delta(s,\eta_1\x\chi,\psi)
=
\eta_1(-1)\chi(\delta)|\delta|_E^{s-\frac{1}{2}}\gamma^{{\rm WD}}(s,\eta\chi,\psi_E).
\end{equation}
The idea is to connect the Rankin-Selberg integrals in this case to the local Tate integrals. The key is to apply the Godement 
sections that we introduced in \S\ref{SSS:Godement}. Let $v\in\cV_{\eta_1}$ and $f_s\in\cV_{B_{{\U}_2}}^{\U_2}(\chi_s)$.
The Rankin-Selberg integral $\Psi_{0,1}(v\ot \xi_s)$ is of the form
\begin{equation}\label{E:int for n=0, r=1, E is field}
\int_{\U_1(F)} \eta_1(\alpha)\xi_s(\jmath^{0,1}(\alpha))d\alpha
=
\int_{E^\x/F^\x}\eta(a)\xi_s\left(\jmath^{0,1}(a\theta(a)^{-1})\right)d^\x a
\end{equation}
where the embedding $\jmath^{0,1}:\U_1(F)\hookto\U_2(F)$ is given by 
\[
\jmath^{0,1}(\alpha)=\frac{1}{2}\pMX{1+\alpha}{1-\alpha}{1-\alpha}{1+\alpha}
\]
for $\alpha\in\U_1(F)$. By \cite[Lemma 3.1]{YCheng}, we may assume that $\xi_s=f_s^\varphi(-;\chi)$ for some 
$\varphi\in\cS(F^2)$. Then by letting $\alpha=a\theta^{-1}$ for some $a\in E^\x$ and the decomposition 
\[
\jmath^{0,1}(\alpha)
=
\jmath^{0,1}(a\theta(a)^{-1})
=
\pMX{a}{}{}{\theta(a)^{-1}}
\pMX{\delta}{}{}{1}
\frac{1}{2}
\pMX{(a^{-1}+\theta(a)^{-1})}{\delta^{-1}(a^{-1}-\theta(a)^{-1})}{\delta(\theta(a)-a)}{(\theta(a)+a)}
\pMX{\delta}{}{}{1}^{-1}
\]
we find that 
\begin{equation}\label{E:xi_s to f_s E is field}
\xi_s(\jmath^{0,1}(\alpha))
=
f_s^{\varphi}(\jmath^{0,1}(a\theta(a)^{-1});\chi)
=
\chi(a)|a|_E^s
\int_{F^\x}
\Phi\left(ta\right)\chi_0(t)|t|_F^{2s}d^\x t
\end{equation}
by \eqref{E:GJ section field}, where we put
\[
\Phi(a)
=
\varphi\left(\frac{\delta(\theta(a)-a)}{2},\frac{\theta(a)+a}{2}\right)
\]
for $a\in E$. This is a Bruhat-Schwartz function on $E$. By \eqref{E:int for n=0, r=1, E is field} and 
\eqref{E:xi_s to f_s E is field}, we see that 
\[
\Psi_{0,1}(v\ot\xi_s)
=
\int_{E^\x}\Phi(a)\eta\chi(a)|a|^s d^\x a
\]
which is exactly the local Tate integral attached to $\Phi$ and $\eta\chi$.

\subsubsection{}
Next, we compute $\Psi_{0,1}(v\ot A_{\psi,\delta}(w_1,1,\chi,s)\xi_s)$ with $\xi_s=f_s^\phi$. By \lmref{L:int on sec}, we have 
\[
 A_{\psi,\delta}(w_1,1,\chi,s)\xi_s
 =
  A_{\psi,\delta}(w_1,1,\chi,s)f^{\varphi}_s(-;\chi)
  =
  \chi_0(-1)f^{\h{\varphi}}_{1-s}(-;\chi^*)
\]
and hence the above calculations (with $\chi$, $s$ and $\varphi$ replaced by $\chi^*$, $1-s$ and $\h{\varphi}$, respectively) 
imply
\[
\Psi_{0,1}(v\ot A_{\psi,\delta}(w_1,1,\chi,s)\xi_s)
=
\chi_0(-1)\int_{E^\x}\Phi'(a)\eta\chi^*(a)|a|^{1-s}_Ed^\x a
\]
where we define
\[
\Phi'(a)
=
\h{\varphi}\left(\frac{\delta(\theta(a)-a)}{2},\frac{\theta(a)+a}{2}\right)
\]
for $a\in E^\x$. To complete the proof, we claim that 
\begin{equation}\label{E:FT field}
\Phi'(a)
=
|\delta|_E^{\frac{1}{2}}\cdot\h{\Phi}(-\delta\theta(a))
\end{equation}
for $a\in E^\x$. Here $\h{\Phi}$ stands for the Fourier transform of $\Phi$ with respect to $\psi_E$, which is given by
\[
\h{\Phi}(a)
=
\int_E\Phi(b)\psi_E(ab)db
\]
with $db$ the Haar measure on $E$ that is self-dual with respect to $\psi_E$.

\subsubsection{}
To prove the claim, we note the formula
\[
\int_E f(b)db
=
|\delta|_E^{\frac{1}{2}}\int_F\int_F f(z+\delta w)dzdw
\]
for every $f\in L^1(E)$, where $db$ is the Haar measure on $E$ that is self-dual with to $\psi_E$, and $du, dv$ are the Haar 
measures on $F$ that are self-dual with to $\psi_2$. Let $a=x+\delta y$ for some $x,y\in F$. We have 
\begin{align*}
\h{\Phi}(-\delta\theta(a))
=
\int_E\Phi(b)\psi_E(-\delta\theta(a)b)db
&=
|\delta|_E^{\frac{1}{2}}
\int_F\int_F
\Phi(z+\delta w)\psi_2(-\Delta xw+\Delta yz)dzdw\\
&=
|\delta|_E^{\frac{1}{2}}
\int_F\int_F
\varphi(-\Delta w,z)\psi_2(-\Delta xw+\Delta yz)dzdw\\
&=
|\delta|_E^{-\frac{1}{2}}
\int_F\int_F
\varphi(v,u)\psi_2(xw+\Delta yz)dzdw\\
&=
|\delta|_E^{-\frac{1}{2}}\h{\varphi}(-\Delta y,x)\\
&=
|\delta|_E^{-\frac{1}{2}}\Phi'(a)
\end{align*}
which verifies the claim. It follows that 
\begin{align*}
\Psi_{0,1}(v\ot A_{\psi}(w_1,1,\chi,s)\xi_s)
&=
\chi_0(-1)\int_{E^\x}\Phi'(a)\eta\chi^*(a)|a|^{1-s}_Ed^\x a\\
&=
\chi_0(-1)|\delta|_E^{\frac{1}{2}}
\int_{E^\x}\h{\Phi}(-\delta\theta(a))\eta\chi^*(a)|a|^{1-s}_Ed^\x a\\
&=
\chi_0(-1)|\delta|_E^{\frac{1}{2}}
\int_{E^\x}\h{\Phi}(-\delta a)(\eta\chi)^{-1}(a)|a|^{1-s}_Ed^\x a\\
&=
\eta_1(-1)\chi(\delta)|\delta|_E^{s-\frac{1}{2}}
\int_{E^\x}\h{\Phi}(a)(\eta\chi)^{-1}(a)|a|^{1-s}_Ed^\x a\\
&=
\eta_1(-1)\chi(\delta)|\delta|_E^{s-\frac{1}{2}}\gamma^{{\rm WD}}(s,\eta\chi,\psi_E)
\int_{E^\x}\Phi(a)\eta\chi(a)|a|^s d^\x a\\
&=
\eta_1(-1)\chi(\delta)|\delta|_E^{s-\frac{1}{2}}\gamma^{{\rm WD}}(s,\eta\chi,\psi_E)
\Psi_{0,1}(v\ot\xi_s)
\end{align*}
where in the fifth equality, we have used the result of Tate (cf. \cite{Tatethesis}, \cite[Section 3.1]{Bump1998}). 
From this, the identity \eqref{E:n=0, r=1, E is field} follows.\qed

\subsection{Proof of Case (ii)}
In this case, we have $n=0$, $r=1$ and $E=F\,\x\, F$. The representation $\pi$ of $\GL_1(F)=F^\x$ is now a character $\eta$
of $F^\x$. On the other hand, the representation $\tau$ of $\G_1(F)=F^\x\,\x\, F^\x$ becomes $\chi_1\bt\chi_2$ for some 
characters $\chi_1,\chi_2$ of $F^\x$. The aim is to prove the following identity:
\begin{equation}\label{E:n=0,r=1,E=FxF}
\Gamma_\delta(s,\eta\x\tau,\psi)
=
\eta\chi_2(-1)\gamma^{{\rm WD}}(s,\eta\chi_1,\psi)\gamma^{{\rm WD}}(s,\eta^{-1}\chi_2,\psi).
\end{equation}
Notice that $\omega_\tau(\delta)|\delta|^{s-1/2}_E=\chi_2(-1)$ in this case.

\subsubsection{}
To establish \eqref{E:n=0,r=1,E=FxF}, the idea is again to connect the Rankin-Selberg integrals in this case to the local 
Tate integrals; also, the key is to use Godement sections. To be more precise, let $v\in\cV_\eta$ and 
$\xi_s\in\cV_{V_{\GL_2}}^{\GL_2}(\tau_s)$. Then the Rankin-Selberg integral $\Psi_{0,1}(v\ot\xi_s)$ is of the form
\[
\Psi_{0,1}(v\ot\xi_s)
=
\int_{F^\x}\eta(a)\xi_s(\jmath^{0,1}(a))d^\x a
\]
where the embedding $\jmath^{0,1}:\GL_1(F)\hookto\GL_2(F)$ is given by 
\[
\jmath^{0,1}(a)
=
\frac{1}{2}\pMX{1}{1}{1}{-1}\pMX{1}{}{}{a}\pMX{1}{1}{1}{-1}
\]
for $a\in F^\x$. As before, we may assume that $\xi_s=f^\varphi_s(-;\tau)$ for some $\varphi\in\cS(F^2)$. 
For the ease of notation, we put
\[
h_0=\frac{1}{2}\pMX{1}{1}{1}{-1}
\quad\text{and}\quad
\varphi'=\rho(h_0)\varphi
\]
where $\rho$ stands for the right translation action of $\GL_2(F)$ on the $\cS(F^2)$, that is, 
$\rho(h)\phi(x,y)=\phi((x,y)h)$, where $h\in\GL_2(F)$ and $\phi\in\cS(F^2)$. At this point, we further assume that 
\[
\varphi'(x,y)=\varphi_1(x)\varphi_2(y)
\]
for some Bruhat-Schwartz functions $\varphi_1,\varphi_2\in\cS(F)$.

\subsubsection{}
By \eqref{E:GJ section split}, we find that 
\begin{align*}
\Psi_{0,1}(v\ot\xi_s)
&=
\int_{F^\x}\eta(a)\xi_s(\jmath^{0,1}(a))d^\x a\\
&=
\int_{F^\x}\eta(a)f^\varphi_s\left(h^{-1}\pMX{1}{}{}{a}h; \tau\right)d^\x a\\
&=
\int_{F^\x}\int_{F^\x}
\eta\chi_1(a)|a|_F^s\,\varphi\left((0,t)h^{-1}\pMX{1}{}{}{a}h\right)\chi_1\chi_2(t)|t|_F^{2s}d^\x td^\x a\\
&=
\int_{F^\x}\int_{F^\x}
\eta\chi_1(a)|a|_F^s\,\varphi'(t,-ta)\chi_1\chi_2(t)|t|_F^{2s}d^\x td^\x a\\
&=
\eta\chi_1(-1)\int_{F^\x}\int_{F^\x}
\varphi'(t,a)\,\eta\chi_1(a)|a|^s_F\,\eta^{-1}\chi_2(t)|t|^s_Fd^\x td^\x a\\
&=
\eta\chi_1(-1)
\left(\int_{F^\x}\varphi_2(a)\eta\chi_1(a)|a|_F^sd^\x a\right)
\left(\int_{F^\x}\varphi_1(t)\eta^{-1}\chi_2(t)|t|_F^sd^\x t\right)
\end{align*}
which is essentially a product of two local Tate integrals.

\subsubsection{}
We proceed to compute $\Psi_{0,1}(v\ot A_{\psi,\delta}(w_{1,1},\tau,s)\xi_s)$. By \lmref{L:int on sec}, we have 
\[
 A_{\psi,\delta}(w_1,1,\chi,s)\xi_s
 =
  A_{\psi,\delta}(w_1,1,\chi,s)f^{\varphi}_s(-;\tau)
  =
  \chi_1\chi_2(-1)f^{\h{\varphi}}_{1-s}(-;\tau^*)
\]
and hence the above calculations (with $\tau$, $s$ and $\varphi$ being replaced by $\tau^*$, $1-s$ and $\h{\varphi}$, 
respectively) imply
\begin{equation}\label{E:dual RS int split}
\Psi_{0,1}(v\ot A_{\psi,\delta}(w_1,1,\chi,s)\xi_s)
=
\eta\chi_1(-1)\int_{F^\x}\int_{F^\x}
\h{\varphi}'(t,a)\,\eta\chi^{-1}_2(a)|a|^{1-s}_F\,\eta^{-1}\chi^{-1}_1(t)|t|^{1-s}_Fd^\x td^\x a
\end{equation}
where we put $\h{\varphi}'=\rho(h_0)\h{\varphi}$. To finishes the proof, we claim that 
\begin{equation}\label{E:FT split}
\h{\varphi}'(x,y)
=
|2|_F^{-1}\,\widehat{\varphi'}\left(-\frac{x}{2},-\frac{y}{2}\right)
=
\h{\varphi}_2\left(x\right)\h{\varphi}_1\left(-y\right).
\end{equation}
Here for $f\in\cS(F)$, we define the Fourier transform $\h{f}$ of $f$ to be 
\[
\h{f}(x)
=
\int_Ff(y)\psi(xy)dy
\]
where $dy$ is the Haar measure on $F$ that is self-dual with respect to $\psi$. We begin with verifying the first equality
in \eqref{E:FT split}, which is in fact a special case of the following identity:
\[
\widehat{\rho(h)\phi}=|\det(h)|^{-1}_F\rho(h')\h{\phi}
\]
for $h\in\GL_2(F)$ and $\phi\in\cS(F^2)$, where 
\[
h':=\pMX{}{-1}{1}{} {}^th^{-1}\pMX{}{1}{-1}{}.
\]
By definition, we have
\begin{align*}
\widehat{\rho(h)\phi}(x,y)
&=
\int_F\int_F \rho(h)\phi(z,w)\psi_2(zy-wx)dzdw\\
&=
\int_F\int_F \phi((z,w)h)\psi_2\left((z,w)\pMX{}{1}{-1}{}\begin{pmatrix}x\\y\end{pmatrix}\right)dzdw\\
&=
|\det(h)|_F^{-1}\int_F\int_F \phi((z,w))\psi_2\left((z,w)h^{-1}\pMX{}{1}{-1}{}\begin{pmatrix}x\\y\end{pmatrix}\right)dzdw\\
&=
|\det(h)|_F^{-1}\int_F\int_F \phi((z,w))\psi_2\left((z,w)\pMX{}{1}{-1}{} {}^th'\begin{pmatrix}x\\y\end{pmatrix}\right)dzdw\\
&=
|\det(h)|^{-1}_F\rho(h')\h{\phi}(x,y)
\end{align*}
as desired. For the second equality in \eqref{E:FT split}, we note that if $d_2z$ (resp. $dz$) is the Haar measure on $F$
that is self-dual with respect to $\psi_2$ (resp. $\psi$), then $d_2z=|2|^{1/2}_Fdz$. Now we compute
\begin{align*}
|2|_F^{-1}\,\widehat{\varphi'}\left(-\frac{x}{2},-\frac{y}{2}\right)
&=
|2|_F^{-1}\int_F\int_F
\varphi'(z,w)\psi_2\left(-\frac{zy}{2}+\frac{wx}{2}\right)d_2zd_2w\\
&=
\int_F\int_F
\varphi_1(z)\varphi_2(w)\psi\left(-zy+wx\right)dzdw\\
&=
\h{\varphi}_2(x)\h{\varphi}_1(-y)
\end{align*}
which verifies the claim.

\subsubsection{}
By \eqref{E:dual RS int split} and \eqref{E:FT split}, we get that 
\begin{align*}
\Psi_{0,1}(v\ot A_{\psi,\delta}(w_1,1,\chi,s)\xi_s)
&=
\eta\chi_1(-1)\int_{F^\x}\int_{F^\x}
\h{\varphi}'(t,a)\,\eta\chi^{-1}_2(a)|a|^{1-s}_F\,\eta^{-1}\chi^{-1}_1(t)|t|^{1-s}_Fd^\x td^\x a\\
&=
\chi_1\chi_2(-1)
\left(\int_{F^\x}\h{\varphi}_2(t)\eta^{-1}\chi^{-1}_1(t)|t|_F^{1-s}d^\x t\right)
\left(\int_{F^\x}\h{\varphi}_1(a)\eta\chi^{-1}_2(a)|a|_F^{1-s}d^\x a\right)\\
&=
\chi_1\chi_2(-1)\gamma^{{\rm WD}}(s,\eta\chi_1,\psi)\gamma^{{\rm WD}}(s,\eta^{-1}\chi_2,\psi)\\
&\quad\quad\quad\quad\quad\quad\quad
\x\left(\int_{F^\x}\varphi_2(a)\eta\chi_1(a)|a|_F^sd^\x a\right)
\left(\int_{F^\x}\varphi_1(t)\eta^{-1}\chi_2(t)|t|_F^sd^\x t\right)\\
&=
\eta\chi_2(-1)\gamma^{{\rm WD}}(s,\eta\chi_1,\psi)\gamma^{{\rm WD}}(s,\eta^{-1}\chi_2,\psi)
\Psi_{0,1}(v\ot\xi_s)
\end{align*}
after applying the result of Tate (cf. \cite{Tatethesis}, \cite[Section 3.1]{Bump1998}) again. 
This proves the identity \eqref{E:n=0,r=1,E=FxF}.\qed

\subsection{Case (iii)}
In this case, we have $n\ge r=1$ and $E=F\,\x\,F$, so that $\tau=\chi_1\bt\chi_2$ for some characters $\chi_1, \chi_2$ of 
$F^\x$ as in the case (ii). The embedding $\jmath_{n,1}:\GL_2(F)\hookto\GL_{2n+1}(F)$ in this case is given by 
\[
\jmath_{n,1}\left(\pMX{a}{b}{c}{d}\right)
=
\begin{pmatrix}
a&&b\\
&I_{2n-1}&\\
c&&d
\end{pmatrix}
\]
where $a,b,c,d$ are scalars. We are going to show the following identity:
\begin{equation}\label{E:n>r=1,E=FxF}
\Gamma_\delta(s,\pi\x\tau,\psi)
=
\omega_\pi(-1)\chi_1(-1)^n\chi_2(-1)^{n+1}
\gamma^{{\rm WD}}(s,\pi\x\chi_1,\psi)\gamma^{{\rm WD}}(s,\t{\pi}\x\chi_2,\psi).
\end{equation}
Again, we indicate that $\omega_{\tau}(\delta)|\delta|_E^{s-1/2}=\chi_2(-1)$. As expected, we will apply the functional 
equations \eqref{E:FE GL} for $\pi\x\chi_1$ and $\pi\x\chi_2$. However, since the non-degenerated character 
$\psi_{\U_{2n+1}}$ is not the one used in \cite{JPSS1983}, the functional equations require some modifications. 
For this, recall that $I'_N\in\GL_N(F)$ is the matrix defined inductively by 
\[
I'_1=(1)
\quad\text{and}\quad
I'_n
=
\pMX{I'_{n-1}}{}{}{(-1)^{n-1}}.
\]
We put 
\[
d
=
\begin{pmatrix}
2I_n&&\\
&1&\\
&&I'_n
\end{pmatrix}.
\]
Now if $\chi$ is a character of $F^\x$, and if we use the Whittaker model $\cW(\pi,\psi_{\U_{2n+1}})$ for $\pi$,
then the functional equation for $\pi\x\chi$ becomes
\begin{align}\label{E:FE GL'}
\begin{split}
\int_{F^\x}\int_{\M_{1,n}(F)}
&W\left(d
\begin{pmatrix}
&I_n&\\
&&I_n\\
a&x
\end{pmatrix}
d^{-1}\right)
\chi(a)|a|_F^{s-1}
dx d^\x a\\
&=
\gamma^{{\rm WD}}(s,\pi\x\chi,\psi)
\int_{F^\x}\int_{\M_{n-1,1}(F)}
W\left(
\begin{pmatrix}
a&&\\
x&I_{n-1}&\\
&&I_{n+1}
\end{pmatrix}
\right)
\chi(a)|a|^{s-n}
dxd^\x a
\end{split}
\end{align}
where $W\in\cW(\pi,\psi_{\U_{2n+1}})$.

\subsubsection{}
Let $v\in\cV_\pi$ and $\xi_s\in\cV_{V_{\GL_2}}^{\GL_2}(\tau_s)$. We have 
\begin{align}\label{E:id for n>r=1 1}
\begin{split}
\Psi_{n,1}(v\ot\xi_s)
&=
\int_{V_{\GL_2}(F)\backslash\GL_2(F)}\int_{\b{X}_{n,1}(F)}
W_v(\b{u}\jmath_{n,1}(h))\xi_s(h)d\b{u}dh\\
&\quad=
\int_{B_{\GL_2}(F)\backslash\GL_2(F)}\int_{\b{X}_{n,1}(F)}\int_{F^\x}\int_{F^\x}d^\x a d^\x bd\b{u}dh\\
&\quad\quad\quad\quad\quad\quad\quad\quad\quad\quad\quad
W_v\left(\b{u}\jmath_{n,1}\left(\pMX{a}{}{}{b}\right)\jmath_{n,1}(h)\right)\chi_1(a)\chi_2^{-1}(b)|ab^{-1}|_F^{s-1}\xi_s(h)
\end{split}
\end{align}
where the $dh$ integration should be understood in the sense of Iwasawa decomposition. To proceed, let 
\[
\b{u}
=
\begin{pmatrix}
1\\
x&I_{n-1}\\
&&1\\
&&&I_{n-1}\\
&&&y&1
\end{pmatrix}
\] 
and we compute 
\[
\b{u}\jmath_{n,1}\left(\pMX{a}{}{}{b}\right)
=
\begin{pmatrix}
a\\
ax&I_{n-1}\\
0&0&1\\
0&0&0&I_{n-1}\\
0&0&0&y&b
\end{pmatrix}
=
\begin{pmatrix}
a\\
ax&I_{n-1}\\
0&0&1\\
0&0&0&I_{n-1}\\
0&0&0&0&1
\end{pmatrix}
\begin{pmatrix}
1\\
0&I_{n-1}\\
0&0&1\\
0&0&0&I_{n-1}\\
0&0&0&y&b
\end{pmatrix}.
\]
Then the last integral in \eqref{E:id for n>r=1 1} becomes
\begin{align*}
&\int_{B_{\GL_2}(F)\backslash\GL_2(F)}\int_{\M_{1,n-1}(F)}\int_{\M_{n-1,1}(F)}\int_{F^\x}\int_{F^\x}
d^\x a d^\x bdxdydh\\
&\quad\quad\quad\quad\quad\quad\quad\quad\quad
W_v\left(
\begin{pmatrix}
a&\\
x&I_{n-1}\\
0&0&I_{n+1}
\end{pmatrix}
\begin{pmatrix}
I_{n+1}\\
0&I_{n-1}\\
0&y&b
\end{pmatrix}
\jmath_{n,1}(h)
\right)
\chi_1(a)|a|_F^{s-n}\chi_2^{-1}(b)|b|_F^{1-s}\xi_s(h)
\end{align*}
after changing the variable $ax\mapsto x$.

\subsubsection{}
At this moment, we can apply the functional equation \eqref{E:FE GL'} to obtain that 
\[
\gamma^{{\rm WD}}(s,\pi\x\chi_1,\psi)\Psi_{n,1}(v\ot\xi_s)
\]
is equal to
\begin{align*}
&\int_{B_{\GL_2}(F)\backslash\GL_2(F)}\int_{\M_{1,n-1}(F)}\int_{\M_{1,n}(F)}\int_{F^\x}\int_{F^\x}
d^\x a d^\x bdxdydh\\
&\quad\quad\quad\quad\quad\quad\quad\quad\quad
W_v\left(
d
\begin{pmatrix}
&I_n&\\
&&I_n\\
a&x
\end{pmatrix}
d^{-1}
\begin{pmatrix}
I_{n+1}\\
0&I_{n-1}\\
0&y&b
\end{pmatrix}
\jmath_{n,1}(h)
\right)
\chi_1(a)|a|_F^{s-1}\chi_2^{-1}(b)|b|_F^{1-s}\xi_s(h).
\end{align*}
Since 
\[
d
\begin{pmatrix}
&I_n&\\
&&I_n\\
a&x
\end{pmatrix}
d^{-1}
\begin{pmatrix}
I_{n+1}\\
0&I_{n-1}\\
0&y&b
\end{pmatrix}
=
d
\begin{pmatrix}
&I_n&\\
&&I_n\\
1&x
\end{pmatrix}
d^{-1}
\begin{pmatrix}
I_{n+1}\\
0&I_{n-1}\\
0&y&1
\end{pmatrix}\jmath_{n,1}\left(\pMX{a}{}{}{b}\right)
\]
and 
\[
d^{-1}
\begin{pmatrix}
I_{n+1}\\
0&I_{n-1}\\
0&y&1
\end{pmatrix}
d
=
\begin{pmatrix}
I_{n+1}\\
0&I_{n-1}\\
0&y'&1
\end{pmatrix}
\quad\text{where}\quad
y'=(-1)^{n-1}yI'_{n-1}
\]
the above integral becomes
\begin{align}\label{E:id for n>r=1 2}
\begin{split}
\int_{V_{\GL_2}(F)\backslash\GL_2(F)}\int_{\M_{1,n-1}(F)}&\int_{\M_{1,n}(F)}dxdydh\\
&W_v\left(
d
\begin{pmatrix}
&I_n&\\
&&I_n\\
1&x
\end{pmatrix}
\begin{pmatrix}
I_{n+1}\\
0&I_{n-1}\\
0&y&1
\end{pmatrix}
d^{-1}
\jmath_{n,1}(h)
\right)
\xi_s(h)
\end{split}
\end{align}
after changing the variable $y'\mapsto y$.

\subsubsection{}
Now, let us write 
\[
\begin{pmatrix}
&I_n&\\
&&I_n\\
1&x
\end{pmatrix}
=
\begin{pmatrix}
&I_n&\\
&&I_n\\
1&
\end{pmatrix}
\begin{pmatrix}
1&0&x\\
&I_n&0\\
&&I_n
\end{pmatrix}
=
\begin{pmatrix}
1&x'&0\\
&I_{2n-1}&0\\
&&1
\end{pmatrix}
\begin{pmatrix}
1&0&x_{2}\\
&I_{2n-1}&0\\
&&1
\end{pmatrix}
\]
and 
\[
\begin{pmatrix}
I_{n+1}\\
0&I_{n-1}\\
0&y&1
\end{pmatrix}
=
\begin{pmatrix}
1\\
0&I_{2n-1}\\
0&y'&1
\end{pmatrix}
\]
with $x'=(0\,\,x_1)\in\M_{1,2n-1}(F)$ and $y'=(0\,\,y)\in\M_{1,2n-1}(F)$, where we denote $x=(x_1\,\,x_2)$ with 
$x_2\in F$ stands for the last column of $x$. We have 
\[
\begin{pmatrix}
1&0&x_{2}\\
&I_{2n-1}&0\\
&&1
\end{pmatrix}
\begin{pmatrix}
1\\
0&I_{2n-1}\\
0&y'&1
\end{pmatrix}
\begin{pmatrix}
1&0&x_{2}\\
&I_{2n-1}&0\\
&&1
\end{pmatrix}^{-1}
=
\begin{pmatrix}
1&x_2y'&0\\
&I_{2n-1}&0\\
&&1
\end{pmatrix}
\begin{pmatrix}
1\\
0&I_{2n-1}\\
0&y'&1
\end{pmatrix}
\]
and 
\[
d
\begin{pmatrix}
1&0&x_{2}\\
&I_{2n-1}&0\\
&&1
\end{pmatrix}
d^{-1}
=
\begin{pmatrix}
1&0&x'_{2}\\
&I_{2n-1}&0\\
&&1
\end{pmatrix}
\quad\text{with}\quad
x'_2=(-1)^{n-1}2x_2.
\]
With these, the integral \eqref{E:id for n>r=1 2} can be written as
\begin{align*}
\begin{split}
&\int_{V_{\GL_2}(F)\backslash\GL_2(F)}\int_{\M_{1,n-1}(F)}\int_{\M_{1,n-1}(F)}\int_F dx_2dx_1dydh\\
&\quad\quad\quad\quad\quad\quad
W_v\left(
d
\begin{pmatrix}
&I_n&\\
&&I_n\\
1&
\end{pmatrix}
\begin{pmatrix}
1&x'+x_2y'&0\\
&I_{2n-1}&0\\
&&1
\end{pmatrix}
\begin{pmatrix}
1\\
0&I_{2n-1}\\
0&y'&1
\end{pmatrix}
d^{-1}
\begin{pmatrix}
1&0&x'_{2}\\
&I_{2n-1}&0\\
&&1
\end{pmatrix}
\jmath_{n,1}(h)
\right)
\xi_s(h).
\end{split}
\end{align*}
Then by first changing the variables $x'+x_2y'\mapsto x'$, $x'_2\mapsto x_2$ and then noting that 
\[
\begin{pmatrix}
1&0&x_{2}\\
&I_{2n-1}&0\\
&&1
\end{pmatrix}
=
\jmath_{n,1}\left(\pMX{1}{x_2}{}{1}\right)
\quad\text{and}\quad
\begin{pmatrix}
1&x'&0\\
&I_{2n-1}&0\\
&&1
\end{pmatrix}
\begin{pmatrix}
1\\
0&I_{2n-1}\\
0&y'&1
\end{pmatrix}
=
\begin{pmatrix}
1&x'&\\
&I_{2n-1}&\\
&y&1
\end{pmatrix}
\]
the above integral alters to
\begin{align}\label{E:id for n>r=1 3}
\begin{split}
|2|^{-1}_F\int_{\GL_2(F)}\int_{\M_{1,n-1}(F)}&\int_{\M_{1,n-1}(F)}
W_v\left(
d
\begin{pmatrix}
&I_n&\\
&&I_n\\
1&
\end{pmatrix}
\begin{pmatrix}
1&x'&\\
&I_{2n-1}&0\\
&y'&1
\end{pmatrix}
d^{-1}
\jmath_{n,1}(h)
\right)
\xi_s(h)
dx_1dydh.
\end{split}
\end{align}

\subsubsection{}
At this point, we make a change of the variable $h\mapsto w_{1,1}h$. Then since 
\[
d':
=
\jmath_{n,1}(w_{1,1})^{-1}d^{-1}\jmath_{n,1}(w_{1,1})
=
\begin{pmatrix}
(-1)^{n-1}\\
&2^{-1}I_{n-1}\\
&&1\\
&&&I'_{n-1}\\
&&&&2^{-1}
\end{pmatrix}
\]
and 
\[
\jmath_{n,1}(w_{1,1})^{-1}
\begin{pmatrix}
1&x'&\\
&I_{2n-1}&0\\
&y'&1
\end{pmatrix}
\jmath_{n,1}(w_{1,1})
=
\begin{pmatrix}
1&y'&\\
&I_{2n-1}&0\\
&x'&1
\end{pmatrix}
\]
The integral \eqref{E:id for n>r=1 3} becomes
\begin{align*}
\begin{split}
|2|^{-1}_F\int_{\GL_2(F)}\int_{\M_{1,n-1}(F)}&\int_{\M_{1,n-1}(F)}dx_1dydh\\
&W_v\left(
d
\begin{pmatrix}
&I_n&\\
&&I_n\\
1&
\end{pmatrix}
\jmath_{n,1}(w_{1,1})
\begin{pmatrix}
1&x'&\\
&I_{2n-1}&0\\
&y'&1
\end{pmatrix}
d'
\jmath_{n,1}(h)
\right)
\xi_s(w_{1,1}h).
\end{split}
\end{align*}
We then factor the $dh$ integration through $V_{\GL_2}(F)$ to obtain
\begin{align*}
\begin{split}
|2|^{-1}_F\int_{\GL_2(F)}&\int_{\M_{1,n-1}(F)}\int_{\M_{1,n-1}(F)}\int_Fdzdx_1dydh\\
&W_v\left(
d
\begin{pmatrix}
&I_n&\\
&&I_n\\
1&
\end{pmatrix}
\jmath_{n,1}(w_{1,1})
\begin{pmatrix}
1&x'&\\
&I_{2n-1}&0\\
&y'&1
\end{pmatrix}
d'
\begin{pmatrix}
1&0&z\\
&I_{2n-1}&0\\
&&1
\end{pmatrix}
\jmath_{n,1}(h)
\right)
\xi_s\left(w_{1,1}\pMX{1}{z}{}{1}h\right).
\end{split}
\end{align*}
Since 
\[
d'
\begin{pmatrix}
1&0&z\\
&I_{2n-1}&0\\
&&1
\end{pmatrix}
d'^{-1}
=
\begin{pmatrix}
1&0&z'\\
&I_{2n-1}&0\\
&&1
\end{pmatrix}
\quad\text{with}\quad
z'=(-1)^{n-1}2z
\]
and 
\[
\begin{pmatrix}
1&0&z'\\
&I_{2n-1}&0\\
&&1
\end{pmatrix}^{-1}
\begin{pmatrix}
1&x'&\\
&I_{2n-1}&0\\
&y'&1
\end{pmatrix}
\begin{pmatrix}
1&0&z'\\
&I_{2n-1}&0\\
&&1
\end{pmatrix}
=
\begin{pmatrix}
1&x'-z'y'&\\
&I_{2n-1}&0\\
&y'&1
\end{pmatrix}
\]
we see that the above integral is equal to
\begin{align}\label{E:id for n>r=1 4}
\begin{split}
|2|^{-1}_F&\int_{\GL_2(F)}\int_{\M_{1,n-1}(F)}\int_{\M_{1,n-1}(F)}\int_Fdzdx_1dydh\\
&\quad\quad
W_v\left(
d
\begin{pmatrix}
&I_n&\\
&&I_n\\
1&
\end{pmatrix}
\jmath_{n,1}(w_{1,1})
\begin{pmatrix}
1&0&z'\\
&I_{2n-1}&0\\
&&1
\end{pmatrix}
\begin{pmatrix}
1&x'&\\
&I_{2n-1}&0\\
&y'&1
\end{pmatrix}
d'
\jmath_{n,1}(h)
\right)
\xi_s\left(w_{1,1}\pMX{1}{z}{}{1}h\right).
\end{split}
\end{align}
after changing the variable $x-z'y'\mapsto x'$.

\subsubsection{}
Now, since 
\[
\jmath_{n,1}(w_{1,1})
\begin{pmatrix}
1&0&z'\\
&I_{2n-1}&0\\
&&1
\end{pmatrix}
\jmath_{n,1}(w_{1,1})^{-1}
=
\begin{pmatrix}
1\\
0&I_{2n-1}\\
z'&0&1
\end{pmatrix}
\]
and
\[
d
\begin{pmatrix}
&I_n&\\
&&I_n\\
1&
\end{pmatrix}
\begin{pmatrix}
1\\
0&I_{2n-1}\\
z'&0&1
\end{pmatrix}
\begin{pmatrix}
&I_n&\\
&&I_n\\
1&
\end{pmatrix}^{-1}
d^{-1}
=
d
\begin{pmatrix}
I_{2n-1}\\
0&1&z'\\
0&0&1
\end{pmatrix}
d^{-1}
=
\begin{pmatrix}
I_{2n-1}\\
0&1&-z'\\
0&0&1
\end{pmatrix}
\]
and also 
\[
W_v\left(
\begin{pmatrix}
I_{2n-1}\\
0&1&-z'\\
0&0&1
\end{pmatrix}
g
\right)
=
\psi^{-1}(-z')W_v(g)
=
\psi^{-1}\left((-1)^n2z\right)W_v(g)
\]
for $g\in\GL_{2n+1}(F)$, we see that integral \eqref{E:id for n>r=1 4} becomes
\begin{align*}
\begin{split}
|2|^{-1}_F\int_{\GL_2(F)}\int_{\M_{1,n-1}(F)}\int_{\M_{1,n-1}(F)}
&W_v\left(
d
\begin{pmatrix}
&I_n&\\
&&I_n\\
1&
\end{pmatrix}
\jmath_{n,1}(w_{1,1})
\begin{pmatrix}
1&x'&\\
&I_{2n-1}&0\\
&y'&1
\end{pmatrix}
d'
\jmath_{n,1}(h)
\right)\\
&\quad\quad\quad\quad\quad\quad
\int_F
\xi_s\left(w_{1,1}\pMX{1}{z}{}{1}h\right)\psi_2^{-1}\left((-1)^nz\right)
dzdx_1dydh.
\end{split}
\end{align*}
Then by changing the variable $(-1)^nz\mapsto z$ and noting that 
\[
\xi_s\left(w_{1,1}\pMX{1}{(-1)^nz}{}{1}h\right)
=
\chi_2(-1)^n\xi_s\left(w_{1,1}\pMX{1}{z}{}{1}\pMX{(-1)^n}{}{}{1}h\right)
\]
we finally get that $\gamma^{{\rm WD}}(s,\pi\x\chi_1,\psi)\Psi_{n,1}(v\ot\xi_s)$ is the same as 
\begin{align*}
\begin{split}
|2|^{-1}_F\chi_2(-1)^n\int_{\GL_2(F)}&\int_{\M_{1,n-1}(F)}\int_{\M_{1,n-1}(F)}\\
&W_v\left(
d
\begin{pmatrix}
&I_n&\\
&&I_n\\
1&
\end{pmatrix}
\jmath_{n,1}(w_{1,1})
\begin{pmatrix}
1&x'&\\
&I_{2n-1}&0\\
&y'&1
\end{pmatrix}
d'
\jmath_{n,1}\left(\pMX{(-1)^n}{}{}{1}h\right)
\right)\\
&\quad\quad\quad\quad\quad\quad\quad\quad\quad\quad\quad\quad\quad\quad\quad\quad\quad\quad
\int_F
\xi_s\left(w_{1,1}\pMX{1}{z}{}{1}h\right)\psi_2^{-1}\left(z\right)
dzdx_1dydh.
\end{split}
\end{align*}

\subsubsection{}
By replacing $\xi_s$ with $A_{\psi,\delta}(w_{1,1},\tau,s)\xi_s$, we find that 
$\gamma^{{\rm WD}}(1-s,\pi\x\chi^{-1}_2,\psi)\Psi_{n,1}(v\ot A_{\psi,\delta}(w_{1,1},\tau,s)\xi_s)$
is equal to
\begin{align*}
\begin{split}
|2|^{-1}_F\chi_1(-1)^n\int_{\GL_2(F)}&\int_{\M_{1,n-1}(F)}\int_{\M_{1,n-1}(F)}\\
&W_v\left(
d
\begin{pmatrix}
&I_n&\\
&&I_n\\
1&
\end{pmatrix}
\jmath_{n,1}(w_{1,1})
\begin{pmatrix}
1&x'&\\
&I_{2n-1}&0\\
&y'&1
\end{pmatrix}
d'
\jmath_{n,1}\left(\pMX{(-1)^n}{}{}{1}h\right)
\right)\\
&\quad\quad\quad\quad\quad\quad\quad\quad\quad\quad\quad\quad
\int_F
A_{\psi,\delta}(w_{1,1},\tau,s)\xi_s\left(w_{1,1}\pMX{1}{z}{}{1}h\right)\psi_2^{-1}\left(z\right)
dzdx_1dydh.
\end{split}
\end{align*}
Then since 
\[
\int_F
\xi_s\left(w_{1,1}\pMX{1}{z}{}{1}h\right)\psi_2^{-1}\left(z\right)dz
=
\int_F
A_\psi(w_{1,1},\tau,s)\xi_s\left(w_{1,1}\pMX{1}{z}{}{1}h\right)\psi_2^{-1}\left(z\right)dz
\]
we deduce that 
\[
\chi_2(-1)^n\gamma^{{\rm WD}}(s,\pi\x\chi_1,\psi)\Psi_{n,1}(v\ot\xi_s)
=
\chi_1(-1)^n\gamma^{{\rm WD}}(1-s,\pi\x\chi^{-1}_2,\psi)\Psi_{n,1}(v\ot A_{\psi,\delta}(w_{1,1},\tau,s)\xi_s).
\] 
From this and \eqref{E:gamma GL 1}, \eqref{E:gamma GL 2}, the identity \eqref{E:n>r=1,E=FxF} follows. 
This completes the proof of \propref{P:main'}.\qed

\section*{Appendix}\label{A}
The definition of the Rankin-Selberg gamma factors comes from the functional equation of the integrals, which, in turn, relies 
on the fact that these integrals define elements in certain Hom-spaces, the dimension of which is at most one. When the 
representations involved are irreducible, this follows from the results in the literature, as indicated below. In this appendix,
we will prove that when $F$ is non-archimedean, these Hom-spaces, which also defined for reducible representations, 
again satisfy this multiplicity-at-most-one property even when the representations involved are reducible.\\

To describe the Hom-spaces, let $Y_{n,r}\subset\U_{2n+1}$ and $Y^{n,r}\subset\U_{2r}$ be 
the unipotent subgroups given by
\[
Y_{n,r}(F)
=
\stt{
y
=
\begin{pmatrix}
I_r&0&0&b_2&0\\
a_1&z_1&x_1&c&b_1\\
&&1&x_2&0\\
&&&z_2&0\\
&&&a_2&I_r
\end{pmatrix}
\in\U_{2n+1}(F)
\mid
z_1, z_2\in V_{\GL_{n-r}}(K)
}
\]
if $n\ge r$, and
\[
Y^{n,r}(F)
=
\stt{
y
=
\begin{pmatrix}
I_{n+1}&b_2&&\\
&z_2&&\\
a_1&c&z_1&b_1\\
&a_2&&I_{n+1}
\end{pmatrix}
\in\U_{2r}(F)
\mid
z_1, z_2\in V_{\GL_{\ell}}(K)
}
\]
if $n< r$, where, as before, $\ell=r-n-1$ when $n<r$, and $K=E$ or $F$ according to $E$ is a filed or not. Except for the 
matrices $z_1, z_2$, the matrices $a_j, b_j, x_j$ and $c$ for $j=1,2$ are of implied sizes, and are such that the resulting 
matrix $y$ is contained in $\U_{2n+1}(F)$ or $\U_{2r}(F)$. In particular, when $E$ is a field, the pairs of matrices 
$(a_1, a_2)$, $(b_1, b_2)$, $(x_1, x_2)$ and $(z_1, z_2)$ are related.\\

Define a character $\psi_{Y_{n,r}}$ of $Y_{n,r}(F)$ by 
\[
\psi_{Y_{n,r}}(y)
=
\begin{cases}
\psi_{V_{\GL_{n-r}}}(z_1)\psi(2^{-1}(x_1)_{1,n-r})&\quad\text{if $E$ is a field};\\

\psi_{V_{\GL_{n-r}}}(z_1)\b{\psi}_{V_{\GL_{n-r}}}(z_2)\psi(2^{-1}(x_1)_{1,n-r}+(x_2)_{11})&\quad\text{if $E=F\,\x\, F$}.
\end{cases}
\]
On the other hand, let $\psi_{Y^{n,r}}$ be the character of $Y^{n,r}(F)$ given by 
\[
\psi_{Y^{n,r}}(y)
=
\begin{cases}
\psi_{V_{\GL_\ell}}(z_1)\psi((a_1)_{n+1,\ell}+(b_1)_{\ell,1})&\quad\text{if $E$ is a field};\\
\psi_{V_{\GL_\ell}}(z_1)\b{\psi}_{V_{\GL_\ell}}(z_2)\psi((a_1)_{\ell,n+1}+(b_1)_{\ell,1}-(a_2)_{11}-(b_2)_{n+1,1})
&\quad\text{if $E=F\,\x\, F$}.
\end{cases}
\]
The unipotent subgroup $Y_{n,r}(F)$ (resp. $Y^{n,r}(F)$) and its character $\psi_{Y_{n,r}}$ 
(resp. $\psi_{Y^{n,r}}$) are stable under the conjugation of $\jmath_{n,r}(\U_{2r}(F))$ (resp. $\jmath^{n,r}(\U_{2n+1}(F))$),
so that we can extend $\psi_{Y_{n,r}}$ (resp. $\psi_{Y^{n,r}}$) to the character of 
\[
\U_{2r}(F)\ltimes Y_{n,r}(F)\quad (\text{resp.}\,\,\U_{2n+1}(F)\ltimes Y^{n,r}(F)) 
\]
which we still denote by $\psi_{Y_{n,r}}$ (resp. $\psi_{Y^{n,r}}$).\\

Now let $\pi$ and $\tau$ be representations of $\U_{2n+1}(F)$ and $\G_r(F)$, respectively. Instead of assuming that 
they are irreducible and generic, we assume that they are of $Whittaker$ $type$, namely, the spaces of Whittaker functionals
\[
{\rm Hom}_{V_{\U_{2n+1}}(F)}\left(\pi,\psi_{\U_{2n+1}}\right)
\quad\text{and}\quad
{\rm Hom}_{V_{\G_r(F)}}\left(\tau,\psi_{\G_r}\right)
\]
of $\pi$ and $\tau$ are one-dimensional. Then the Rankin-Selberg integrals also defined for these representations, and 
converge absolutely in some right-half complex plane. Furthermore, when $F$ is non-archimedean, they also admit 
the meromorphic continuation, and in the domain of absolute convergence, become rational functions in $q^{-s}$.
In any case, when the integrals are absolute convergence, we have
\begin{equation}\label{E:equiv n>r}
\Psi_{n,r}(\pi(\jmath_{n,r}(h)y)v\ot\rho_{\tau,s}(h)\xi_s)
=
\psi_{Y_{n,r}}(y)\Psi_{n,r}(v\ot\xi_s)
\end{equation}
if $n\ge r$, where $(h,y)\in\U_{2r}(F)\ltimes Y_{n,r}(F)$ and 
\begin{equation}\label{E:equiv n<r}
\Psi_{n,r}(\pi(g)v\ot\rho_{\tau,s}(\jmath^{n,r}(g)y)\xi_s)
=
\psi_{Y^{n,r}}(y)\Psi_{n,r}(v\ot\xi_s)
\end{equation}
if $n<r$, where $(g,y)\in\U_{2n+1}(F)\ltimes Y^{n,r}(F)$.\\

Because of the above equivalence properties of the integrals, its natural to consider the following Hom-spaces 
\begin{equation}\label{E:hom-space n>r}
{\rm Hom}_{\U_{2r}(F)\ltimes Y_{n,r}(F)}(\pi\ot\rho,\psi_{Y_{n,r}})
\end{equation}
if $n\ge r$, and 
\begin{equation}\label{E:hom-space n<r}
{\rm Hom}_{\U_{2n+1}(F)\ltimes Y^{n,r}(F)}(\pi\ot\rho,\psi_{Y^{n,r}})
\end{equation}
if $n<r$. Here $\pi$ and $\rho$ are representations of $\U_{2n+1}(F)$ and $\U_{2r}(F)$, respectively, and we let
$Y_{n,r}(F)$ (resp. $Y^{n,r}(F)$) act trivially on $\rho$ (resp. $\pi$) when $n\ge r$ (resp. $n<r$). When $F$ is archimedean,
we understand that the underlying space of $\pi\ot\rho$ is the completed projective tensor product 
$\cV_\pi\widehat{\ot}\cV_\rho$, and elements in \eqref{E:hom-space n>r} and \eqref{E:hom-space n<r} are required to be 
continuous. Now we have the following result.

\begin{thmA}[\cite{JiangSunZhu2010}, \cite{SunZhu2012}, \cite{AGRS2010}, \cite{GanGrossPrasad2012}]\label{T:mult-one}
Suppose that $\pi$ and $\rho$ are irreducible. Then the spaces \eqref{E:hom-space n>r} and \eqref{E:hom-space n<r} 
have at most one-dimensional.
\end{thmA}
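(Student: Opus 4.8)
The plan is to identify the two Hom-spaces in \eqref{E:hom-space n>r} and \eqref{E:hom-space n<r} with local \emph{Bessel-model multiplicity spaces} for a pair of unitary groups, and then to invoke the multiplicity-one theorems of \cite{AGRS2010}, \cite{SunZhu2012}, \cite{JiangSunZhu2010} and \cite{GanGrossPrasad2012}. For $n\ge r$ the relevant pair is $(\U_{2n+1},\U_{2r})$, of odd codimension $2(n-r)+1$: the subgroup $\jmath_{n,r}(\U_{2r}(F))\ltimes Y_{n,r}(F)$ equipped with the character $\psi_{Y_{n,r}}$ is, after one inner conjugation reconciling the realization of $\U_N$ through $S_N$ with the flag-and-vector conventions of \cite{GanGrossPrasad2012}, precisely the Bessel datum attached to the isotropic flag spanned by the first $n-r$ basis vectors and to the anisotropic line $F\cdot f_0$ in the middle of the form; here $Y_{n,r}=N_P$ for the standard parabolic $P\subset\U_{2n+1}$ with Levi $\GL_1^{\,n-r}\times\U_{2r+1}$, and $\jmath_{n,r}(\U_{2r})$ is the stabilizer of $f_0$ inside $\U_{2r+1}$. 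For $n<r$ the situation is mirror-symmetric, the pair being $(\U_{2r},\U_{2n+1})$ of codimension $2\ell+1$ with $Y^{n,r}\subset\U_{2r}$; this case is in fact already covered by \cite[Theorem 6.3]{MorimotoSoudry2020}, and the argument for $n\ge r$ is entirely parallel.

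First I would make the conjugation and the parabolic $P$ explicit and check, by a direct matrix computation, that $Y_{n,r}$ is its unipotent radical and that $\psi_{Y_{n,r}}$ is the standard Bessel character --- generic along the $\GL_1^{\,n-r}$-block and given by pairing the last vector of the flag against $f_0$ --- in the normalization of \cite{GanGrossPrasad2012}; when $E=F\,\x\,F$ this reduces to the corresponding statement for a pair of general linear groups, the two Whittaker factors $\psi_{V_{\GL_{n-r}}}(z_1)\,\bar\psi_{V_{\GL_{n-r}}}(z_2)$ reflecting the two $\GL$-factors of $\G_{n-r}$. With this identification, Frobenius reciprocity rewrites \eqref{E:hom-space n>r} as $\Hom_{\U_{2r}(F)}(\mathcal{B}_\psi(\pi)\ot\rho,\C)$, where $\mathcal{B}_\psi(\pi)$ is the twisted Jacquet module of $\pi$ along $(Y_{n,r},\psi_{Y_{n,r}})$; this is a smooth representation of $\U_{2r}(F)$ (Casselman--Wallach when $F$ is archimedean, so that continuity of the functionals is automatic), and the displayed Hom-space is by definition the local Bessel multiplicity $m(\pi,\rho)$. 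Finally I would quote $m(\pi,\rho)\le 1$: when $F$ is non-archimedean this follows from the basic rank-one case $\Hom_{\U_m(F)}(\pi'\ot\rho',\C)\le 1$ of \cite{AGRS2010} together with the reduction of arbitrary Bessel multiplicities to the basic case in \cite{GanGrossPrasad2012}; when $F$ is archimedean one uses \cite{SunZhu2012} for the basic case and \cite{JiangSunZhu2010} for general Bessel models; the split cases are the analogous statements for $(\GL_{m+1},\GL_m)$.

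The only step that is not purely formal is the matching in the first paragraph: the Rankin--Selberg unipotent $Y_{n,r}$ and its character carry the specific normalizations of \cite{Ben-ArtziSoudry2009} and \cite{MorimotoSoudry2020} --- the choice of $S_N$, the factor $2^{-1}$ appearing in $\psi_{\U_{2n+1}}$, and (when $E$ is a field) the constraints tying $(a_1,a_2)$, $(b_1,b_2)$, $(x_1,x_2)$ and $(z_1,z_2)$ to one another --- and these must be transported, by an explicit conjugating element, onto the flag-and-anisotropic-vector datum underlying \cite{GanGrossPrasad2012}. This is the bookkeeping carried out by Morimoto--Soudry in the case $n<r$; I expect it to be the main (and essentially the only) obstacle for $n\ge r$, after which Theorem~A is exactly the cited multiplicity-one results with no additional analytic input.
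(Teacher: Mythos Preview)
Your proposal is correct and matches the paper's treatment exactly: the paper does not prove this theorem but simply cites it, with a remark explaining that the non-archimedean basic case $r=n$ or $n+1$ is \cite{AGRS2010}, the extension to arbitrary $n,r$ is \cite[Section 15]{GanGrossPrasad2012}, and the archimedean cases are \cite{SunZhu2012} and \cite{JiangSunZhu2010} respectively. Your identification of the Hom-spaces with Bessel multiplicity spaces and the subsequent invocation of these references is precisely the content behind the citations.
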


We remark that when $F$ is non-archimedean, \thmref{T:mult-one} for $r=n$ or $n+1$ was first established by 
Aizenbud-Gourevith-Rallis-Schiffmann in \cite{AGRS2010}; their result was later extended to arbitrary $n,r$ by 
Gan-Gross-Prasad in \cite[Section 15]{GanGrossPrasad2012}. When $F$ is archimedean, \thmref{T:mult-one} for $r=n$ or 
$n+1$ was due to Sun-Zhu in \cite{SunZhu2012}, and then due to Jiang-Sun-Zhu for arbitrary $n, r$ in 
\cite{JiangSunZhu2010}.\\ 

As pointed out in the beginning of this appendix, our aim is to remove the irreducibility assumption, at least when $F$ is 
non-archimedean. In this direction, we have the following result due to Morimoto-Soudry: 

\begin{thmA}[\cite{MorimotoSoudry2020}]
Suppose that $F$ is non-archimedean, and $\pi$, $\tau$ are of Whittaker type. Then the space \eqref{E:hom-space n<r} 
with $\rho_{\tau,s}$ being replaced by $\rho_{\tau,s}$ are one-dimensional except for countable many $s$.
\end{thmA}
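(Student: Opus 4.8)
The plan is to analyse the space \eqref{E:hom-space n<r}, with $\rho$ replaced by $\rho_{\tau,s}=\Ind_{Q_{2r}(F)}^{\U_{2r}(F)}(\tau_s)$, by the geometric (Bruhat--Bernstein--Zelevinsky) method, following the template of Soudry for $\SO_{2\ell+1}\x\GL_r$ in \cite{Soudry1993} and its adaptation to unitary groups in \cite{MorimotoSoudry2020}. Write $H=\jmath^{n,r}(\U_{2n+1}(F))\ltimes Y^{n,r}(F)\subset\U_{2r}(F)$ and let $\chi_H$ be the character of $H$ extending $\psi_{Y^{n,r}}$ and trivial on $\jmath^{n,r}(\U_{2n+1}(F))$, as in the paragraph preceding \eqref{E:equiv n<r}. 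By Frobenius reciprocity the space \eqref{E:hom-space n<r} is $\Hom_H(\rho_{\tau,s},\widetilde\pi\otimes\chi_H)$, where $\widetilde\pi$ is inflated to $H$ through $H\to\U_{2n+1}(F)$. Since $F$ is non-archimedean, $\rho_{\tau,s}|_H$ carries a finite $H$-stable filtration whose graded pieces are compactly induced from the $H$-orbits on the flag variety $Q_{2r}(F)\backslash\U_{2r}(F)$; hence the dimension of \eqref{E:hom-space n<r} is bounded by $\sum_j\dim\Hom_{H_j}(\tau_s^{(j)}\otimes\widetilde\pi,\chi_H|_{H_j})$, where $H_j$ is the stabiliser of a representative of the $j$-th orbit and $\tau_s^{(j)}$ is the pullback along $H_j\to Q_{2r}$ of a twisted Jacquet module of $\tau_s$.

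First I would enumerate the orbits $\cO_0,\dots,\cO_N$ via the double cosets $Q_{2r}\backslash\U_{2r}/H$ and isolate the unique \emph{relevant} orbit $\cO_0$, the one on which the Rankin--Selberg integral \eqref{E:RS int n<r} is supported. On $\cO_0$, carrying out the remaining unipotent integrations turns $\Hom_{H_0}(\tau_s^{(0)}\otimes\widetilde\pi,\chi_H|_{H_0})$ into a Rankin--Selberg-type Hom-space between a Whittaker-type quotient of $\tau$ for $\G_r$ --- controlled by the uniqueness results of \cite{JPSS1983} and \cite{Jacquet2009} --- and the Whittaker space $\cW(\pi,\psi_{\U_{2n+1}})$; since $\tau$ and $\pi$ are of Whittaker type this space is at most one-dimensional, and \eqref{E:RS int n<r} exhibits a nonzero element of it for $\Re(s)\gg0$, so it is exactly one-dimensional there.

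Next I would prove that every non-relevant orbit contributes zero. For most $j\ge1$ the stabiliser $H_j$ contains a unipotent subgroup on which $\chi_H$ is nontrivial while $\tau_s^{(j)}\otimes\widetilde\pi$ is trivial, which forces $\Hom_{H_j}(\tau_s^{(j)}\otimes\widetilde\pi,\chi_H|_{H_j})=0$ for every $s$. For the remaining ``boundary'' orbits the vanishing is reduced to the assertion that a certain Jacquet module of $\tau_s$ along a parabolic of $\G_r$ has no subquotient with a prescribed central character; since that central character depends on $s$ through the factor $|\cdot|^s$ on a split torus, such a coincidence can occur only for $s$ in a discrete, hence countable, subset of $\mathbb{C}$. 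Combining these steps, \eqref{E:hom-space n<r} with $\rho=\rho_{\tau,s}$ is at most one-dimensional for all $s$ outside a countable set, and the meromorphic-continuation bookkeeping of \cite[Section 11]{Soudry1993} then yields the stated conclusion.

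The hard part will be the third step: organising the combinatorics of the non-relevant $H$-orbits on $Q_{2r}(F)\backslash\U_{2r}(F)$ --- heavier for $\U_{2r}$ than for split $\SO$, because the Weil restriction entangles the two $\GL$-factors when $E$ is a field --- and, for each boundary orbit, producing either the killing unipotent direction or the precise central-character obstruction. Keeping track of how these obstructions move with $s$, together with the zeros and poles of the Rankin--Selberg integral and of the normalised intertwining operator, is exactly what forces the ``countably many $s$'' in the statement rather than an unconditional result.
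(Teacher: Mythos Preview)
The paper does not supply its own proof of this statement: it is quoted directly from \cite[Theorem 6.3]{MorimotoSoudry2020} and no argument is given here. Your sketch is a faithful outline of the Morimoto--Soudry approach (Bruhat filtration of $\rho_{\tau,s}|_H$ by $H$-orbits on $Q_{2r}\backslash\U_{2r}$, isolation of the open relevant orbit via Whittaker uniqueness, and elimination of the boundary orbits by unipotent-character or central-character obstructions depending on $s$), so in that sense it matches the cited source rather than anything in this paper.

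It is worth noting, by way of comparison, that the paper's own contribution in the Appendix is the companion case $n\ge r$ (Theorem~C), and there the authors do \emph{not} filter the induced representation $\rho_{\tau,s}$ as you propose. Instead they pass to a Jacquet module of $\pi$ with respect to a certain unipotent radical $Y$, endow it with a $P_{n+1}(F)\times P'_{n+1}(F)$-module structure via the mirabolic, and invoke the Bernstein--Zelevinsky classification of irreducible mirabolic representations together with the filtration \eqref{E:P_n+1}. The Hom-space is then rewritten as \eqref{E:Hom space} and analysed using the derivative calculus of \cite{BZ1977} as in \cite[pp.~55--57]{Soudry1993}. Your orbit-on-the-flag-variety method and the paper's mirabolic-filtration method are the two standard dual strategies for such multiplicity results; for $n<r$ the former (your route, and Morimoto--Soudry's) is more natural because the ``small'' group $\U_{2n+1}$ sits inside the induced side, whereas for $n\ge r$ the latter is more convenient. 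So your plan is correct for the stated theorem, but be aware it is not the template the present paper uses for its own analogous result.
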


We obtain a similar result when $n\ge r$.

\begin{thmA}
Suppose that $F$ is non-archimedean, and $\pi$, $\tau$ are of Whittaker type. Then the space \eqref{E:hom-space n>r} 
with $\rho_{\tau,s}$ being replaced by $\rho_{\tau,s}$ are one-dimensional except for countable many $s$.
\end{thmA}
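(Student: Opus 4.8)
The plan is to follow, with the roles of $\U_{2n+1}$ and $\U_{2r}$ interchanged and the embedding $\jmath_{n,r}$ used in place of $\jmath^{n,r}$, the proof of the companion statement for $n<r$ due to Morimoto--Soudry (\cite[Theorem 6.3]{MorimotoSoudry2020}), with the arguments of Soudry in \cite[Section~11]{Soudry1993} and the inputs behind \thmref{T:mult-one} (\cite{AGRS2010}, \cite{GanGrossPrasad2012}) as the other ingredients. First I would reduce to the situation in which $\pi$ and $\tau$ are already realized in their Whittaker models $\cW(\pi,\psi_{\U_{2n+1}})$ and $\cW(\tau,\psi^{-1}_{\G_r})$; this costs nothing, since the Rankin--Selberg data $W_v$ and $f_{\xi_s}$, hence the functional $\Psi_{n,r}$ and the equivariance property \eqref{E:equiv n>r}, depend only on these models. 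The lower bound $\dim\ge 1$ is then automatic: the results of \S\ref{SSS:RS} remain valid for representations of Whittaker type (see the Remarks at the end of \S\ref{S:gamma}), so $\Psi_{n,r}$ provides a nonzero meromorphic family of elements of \eqref{E:hom-space n>r}, and this space is therefore nonzero for all but countably many $s$. Thus the whole content is the bound $\dim\le 1$ for $s$ in general position, which is known (\thmref{T:mult-one}) when $\pi$ and $\rho_{\tau,s}$ are irreducible and must be upgraded to Whittaker type.

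For the upper bound I would first collapse the unipotent direction. Since $\jmath_{n,r}(\U_{2r}(F))$ normalizes $Y_{n,r}(F)$ and stabilizes $\psi_{Y_{n,r}}$, the twisted Jacquet module $J:=\pi_{Y_{n,r},\psi_{Y_{n,r}}}$ is a smooth $\U_{2r}(F)$-module, and, $J$ being smooth,
\[
\Hom_{\U_{2r}(F)\ltimes Y_{n,r}(F)}\bigl(\pi\otimes\rho_{\tau,s},\,\psi_{Y_{n,r}}\bigr)
\;\cong\;
\Hom_{\U_{2r}(F)}\bigl(J,\,\widetilde{\rho_{\tau,s}}\bigr).
\]
As $\widetilde{\rho_{\tau,s}}\cong I_{Q_{2r}}^{\U_{2r}}(\widetilde{\tau_s})$, Frobenius reciprocity for normalized induction identifies the right-hand side with $\Hom_{\G_r(F)}\bigl(r_{Q_{2r}}(J),\,\widetilde{\tau_s}\bigr)$, a Hom-space over the Levi $M_{Q_{2r}}(F)\cong\G_r(F)$ between the normalized Jacquet module $r_{Q_{2r}}(J)$ — itself a twisted Jacquet module of $\pi$, by the unipotent subgroup $Y_{n,r}\cdot\jmath_{n,r}(N_{Q_{2r}})$ of $\U_{2n+1}$, and depending only on $\pi$ — and the twist $\widetilde{\tau_s}$ of $\widetilde{\tau}$, which varies with $s$.

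The decisive step is to describe $r_{Q_{2r}}(J)$ as a $\G_r(F)$-module. Realizing $\pi$ inside $\Ind_{V_{\U_{2n+1}}(F)}^{\U_{2n+1}(F)}(\psi_{\U_{2n+1}})$ and applying the geometric lemma, one obtains a finite filtration of $r_{Q_{2r}}(J)$ indexed by the relevant double cosets of $V_{\U_{2n+1}}\backslash\U_{2n+1}\big/\bigl(Y_{n,r}\,\jmath_{n,r}(N_{Q_{2r}})\bigr)$. Exactly one of them, the open double coset, yields (a subquotient of) the Gelfand--Graev representation $\ind_{V_{\G_r}(F)}^{\G_r(F)}(\psi^{-1}_{\G_r})$ of $\G_r(F)$ assembled from the Whittaker model of $\pi$; by reciprocity for compact induction its contribution to the Hom-space is bounded by the space of $\psi^{-1}_{\G_r}$-Whittaker functionals of $\widetilde{\tau_s}$, which is one-dimensional because $\widetilde{\tau}$ is of Whittaker type and Whittaker functionals are insensitive to the twist by $s$. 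Each of the remaining double cosets contributes a subquotient built from a proper Jacquet module of $\pi$ paired against a proper Jacquet module of $\widetilde{\tau_s}$; the resulting Hom-space is nonzero only when a matching of supercuspidal supports occurs, and because of the moving factor carried by $\widetilde{\tau_s}$ this happens only for $s$ in a finite union of cosets of $\frac{2\pi i}{\log q}\,\mathbb{Z}$, i.e.\ for countably many $s$. Summing over the finitely many cosets gives $\dim\le 1$ for all but countably many $s$. The case $E=F\times F$, with $\G_r=\GL_r\times\GL_r$ and the characters $\psi_{\G_r}$, $\psi_{Y_{n,r}}$ as in \S\ref{S:gamma}, runs in complete parallel and requires no new idea.

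The step I expect to be the main obstacle is the third paragraph: the explicit Bruhat-theoretic bookkeeping for $Y_{n,r}\,\jmath_{n,r}(N_{Q_{2r}})$ inside $\U_{2n+1}$, the identification of the open double coset and of the Gelfand--Graev piece attached to it, and — most delicately — the verification that every non-open coset genuinely forces a proper Jacquet module of $\tau$ to intervene, so that the $s$-dependent twist really cuts the surviving set of $s$ down to a countable one. Along with this one must handle the familiar difficulty that twisted Jacquet functors are only right exact, so that the passage from the embedding $\pi\hookrightarrow\Ind_{V_{\U_{2n+1}}}(\psi_{\U_{2n+1}})$ to the above filtration of $r_{Q_{2r}}(J)$ has to be controlled by an explicit, rather than purely formal, argument of exactly the type carried out in \cite{Soudry1993} and \cite{MorimotoSoudry2020}.
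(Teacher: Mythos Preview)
Your Frobenius-reciprocity reduction to $\Hom_{\G_r(F)}\bigl(r_{Q_{2r}}(J),\,\widetilde{\tau_s}\bigr)$ is correct, and the overall philosophy (an open piece contributes dimension one, the others vanish for generic $s$) is the right one. But the decisive step --- your third paragraph --- takes a route different from the paper's and contains a real gap. You write ``realizing $\pi$ inside $\Ind_{V_{\U_{2n+1}}(F)}^{\U_{2n+1}(F)}(\psi_{\U_{2n+1}})$'', but for $\pi$ merely of Whittaker type this is \emph{not} an embedding: only the Whittaker quotient $\cW(\pi,\psi_{\U_{2n+1}})$ sits inside that induced space, and passing to a quotient can only shrink the Hom-space, which is the wrong direction for the upper bound. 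You flag the right-exactness issue at the end, but the argument as written still presupposes an injection that need not exist. Even after replacing $\pi$ by $\cW(\pi)$, the ``geometric lemma'' does not apply in the form you invoke, since $V_{\U_{2n+1}}$ is not a parabolic; the double-coset Mackey analysis you sketch would have to be built by hand, and it is not clear it yields a \emph{finite} filtration with the properties you need.

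The paper avoids all of this by going through the mirabolic subgroup rather than through $\U_{2r}$ and its Siegel Levi. Concretely (in the split case $E=F\times F$, the field case having been done in \cite{YCheng2}): one takes the ordinary Jacquet module $\pi_Y$ along a specific abelian unipotent $Y\subset\GL_{2n+1}$, observes that the relevant quotient $P/Y$ is isomorphic to a product of mirabolic subgroups $P_{n+1}\times P'_{n+1}$, and then invokes the Bernstein--Zelevinsky derivative filtration via \cite[Proposition~8.2]{GPSR1987}. For any finite-length $\pi$ of Whittaker type, $\pi_Y$ carries a finite $P_{n+1}\times P'_{n+1}$-filtration whose bottom piece is the Kirillov representation $\ind_{V_{\GL_{n+1}}}^{P_{n+1}}\psi_{\GL_{n+1}}\boxtimes\ind_{V_{\GL_{n+1}}}^{P'_{n+1}}\psi^{-1}_{\GL_{n+1}}$, occurring with multiplicity one, and whose other subquotients are inequivalent to it. The original Hom-space is then rewritten as a Hom over $P_{n+1}\times P'_{n+1}$ between $\pi_Y$ and a compactly induced representation built from twists of $\tau_1,\tau_2$ by $|\cdot|_F^{\pm s}$, and the BZ classification of irreducible mirabolic representations, applied subquotient by subquotient, finishes the argument exactly as in \cite[pp.~55--57]{Soudry1993}. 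The key point is that the derivative filtration exists for $\pi$ itself, with no need to embed $\pi$ anywhere.

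In short: your plan would at best prove the statement for $\cW(\pi)$ rather than for $\pi$ (which, to be fair, already suffices for the gamma-factor application), and even that would require an orbit computation not covered by the standard geometric lemma. The mirabolic/BZ argument is both cleaner and strictly stronger.
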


\begin{proof}
When $E$ is a field, this result is obtained in the appendix of \cite{YCheng2}, so we assume that $E=F\,\x\,F$. The proof 
of this case is similar to the case when $E$ is a field; indeed, we only need to "double" everything. More concretely, a key 
to the proof when $E$ is a field is the existence of a finite length $P_{n+1}(F)$-filtration of certain Jacquet module of $\pi$.
Here $P_{n+1}\subset\GL_{n+1}$ is the "mirobolic subgroup". When $E=F\,\x\,F$, on the other hand, such a finite length
$P_{n+1}(F)$-filtration changes to a finite length $P_{n+1}(F)\,\x\,P_{n+1}(F)$-filtration; consequently, we only need to 
replace $P_{n+1}(F)$ by $P_{n+1}(F)\,\x\,P_{n+1}(F)$ in the rest of the arguments.\\

To describe the $P_{n+1}(F)\,\x\,P_{n+1}(F)$-filtration, let $Y\subset P\subset\GL_{2n+1}$ be the subgroups defined by 
\[
P(F)
=
\stt{
\begin{pmatrix}
a&x&c\\
&1&y\\
&&b
\end{pmatrix}
\mid 
a,b\in\GL_n(F),\,c\in\M_{n,n}(F),\,x\in\M_{n,1}(F)\,\,\text{and}\,\,y\in\M_{1,n}(F)
}
\]
and 
\[
Y(F)
=
\stt{
\begin{pmatrix}
I_n&0&c\\
&1&0\\
&&I_n
\end{pmatrix}
\mid 
c\in\M_{n,n}(F)}.
\]
Clearly, $Y$ is normal in $P$. Next, let $P'_{n+1}\subset\GL_{n+1}$ be the subgroup given by
\[
P'_{n+1}(F)
=
\stt{
\pMX{1}{y}{}{b}\mid b\in\GL_n(F)\,\,\text{and}\,\,y\in\M_{1,n}(F)
}.
\]
Observe that $P'_{n+1}$ is isomorphic to $P_{n+1}$ via the isomorphism sending $g$ to $g^*=J_{n+1}{}^tg^{-1}J_{n+1}$.
Now let $\beta$ be the following isomorphism:
\[
\beta: P(F)/Y(F)\longto P_{n+1}(F)\,\x\,P'_{n+1}(F);
\quad
\begin{pmatrix}
a&x&0\\
&1&y\\
&&b
\end{pmatrix}Y(F)
\mapsto
\left(
\pMX{a}{x}{}{1},\pMX{1}{y}{}{b}
\right).
\]
Then the Jacquet module $(\pi_Y, \cV_{\pi, Y})$ of $\pi$ with respect to $Y(F)$ admits a 
$P_{n+1}(F)\,\x\,P'_{n+1}(F)$-structure via $\beta$, namely, for $p\in P(F)$ and $v\in\cV_\pi$, we have 
\[
\pi_Y(\beta(pY(F)))\left(v+\cV_\pi(Y)\right)
=
\pi(p)v
\]
where $\cV_\pi(Y)\subset\cV_\pi$ is the subspace spanned by the elements of the form $\pi(y)w-w$ for $y\in Y(F)$ and 
$w\in\cV_\pi$.\\

The assumption that $\pi$ is of Whittaker type and the proof of \cite[Proposition 8.2]{GPSR1987} give the following finite 
sequence of $P_{n+1}(F)\,\x\,P'_{n+1}(F)$-modules
\begin{equation}\label{E:P_n+1}
0\subset \cV_0\subset \cV_1\subset\cdots\subset\cV_M=\cV_{\pi,Y}
\end{equation}
such that $\cV_{i-1}\backslash\cV_i$ is an irreducible $P_{n+1}(F)\,\x\,P'_{n+1}(F)$-module for $1\le i\le M$, 
\[
\cV_1\cong
{\rm ind}_{V_{\GL_{n+1}}(F)}^{P_{n+1}(F)}\psi_{\GL_{n+1}}
\bt
{\rm ind}_{V_{\GL_{n+1}}(F)}^{P'_{n+1}(F)}\psi^{-1}_{\GL_{n+1}}\]
and $\cV_{i+1}/\cV_i\ncong\cV_1$ for $1\le i\le M-1$. Here and below,  "ind" stands for the compact induction 
and the inductions are non-normalized.\\

At this point, let $\sR_r$ (resp. $\sR'_r$) be the subgroup of $P_{n+1}$ (resp. $P'_{n+1}$) defined by 
\[
\sR_r(F)
=
\stt{
e(x,z,y)
=
\begin{pmatrix}
I_r&0&0\\
x&z&y\\
0&0&1
\end{pmatrix}
\mid
x\in{\rm Mat}_{(n-r), r}(F), z\in V_{\GL_{n-r}}(F)\,\,\text{and}\,\, y\in\M_{1,n-r}(F)
}
\]
and respectively, 
\[
\sR'_r(F)
=
\stt{
e'(x',z',y')
=
\begin{pmatrix}
1&x'&0\\
0&z'&0\\
0&y'&1_r
\end{pmatrix}
\mid
x'\in{\rm Mat}_{1,n-r}(F), z'\in V_{\GL_{n-r}}(F)\,\,\text{and}\,\, y'\in\M_{r,n-r}(F)
}.
\]
Denote by $\psi_{\sR_r}$ (resp. $\psi_{\sR'_r}$) the character of $\sR_r(F)$ (resp, $\sR'_r(F)$) given by 
\[
\psi_{\sR_r}(e(x,y,z))
=
\psi_{\GL_{n-r}}(z)\psi(y_{1,n-r})
\]
and respectively, 
\[
\psi_{\sR'_r}(e'(x',y',z'))
=
\psi_{\GL_{n-r}}(z')\psi(x_{1,1}).
\]
Then similar arguments as in \cite[Appendix]{YCheng2} imply that the Hom-space 
\[
{\rm Hom}_{\U_{2r}(F)\ltimes Y_{n,r}(F)}(\pi\ot\rho_{\tau,s},\psi_{Y_{n,r}})
\]
is isomorphic to
\begin{equation}\label{E:Hom space}
{\rm Hom}_{P_{n+1}(F)\,\x\,P'_{n+1}(F)}
\left(
\pi_Y\ot
\left({\rm ind}^{P_{n+1}(F)}_{\GL_r(F)\ltimes\sR_r(F)}(\tau_{1,s+1}\boxtimes\psi_{\sR_r}^{-1})
\bt
{\rm ind}^{P_{n+1}(F)}_{\GL_r(F)\ltimes\sR'_r(F)}(\tau_{2,1-s}\boxtimes\psi_{\sR'_r})\right), 
\mathbb{C}\right)
\end{equation}
where we regard $\GL_r(F)$ as a subgroup of $P_{n+1}(F)$ or $P'_{n+1}(F)$ via the embeddings 
\[
a
\mapsto
\pMX{a}{}{}{I_{n-r+1}}
\quad
\text{or} 
\quad
a
\mapsto
\pMX{I_{n-r+1}}{}{}{a}
\]
respectively.\\

To analyze the Hom-space \eqref{E:Hom space}, we remind that the irreducible representations of $P'_{n+1}(F)$ are of the 
form
\[
{\rm ind}_{\cR'_m(F)}^{P'_{n+1}(F)}\sigma\boxtimes\psi^{-1}_{\GL_{n+1-m}}
\]
where $\cR'_m\subset P'_{n+1}$ is the subgroup given by
\[
\cR'_m(F)
=
\stt{
\pMX{z}{y}{}{b}\mid b\in\GL_m(F),\, y\in{\rm Mat}_{m, (n+1-m)}(F)\,\,\text{and}\,\, z\in V_{\GL_{n+1-m}}(F)
}
\]
for $0\le m\le n$, $\sigma$ is an irreducible representation of $\GL_m(F)$ and we extend the representation 
$\sigma\boxtimes\psi^{-1}_{\GL_{n+1-m}}$ of $\GL_m(F)\x V_{\GL_{n+1-m}}(F)$ to $\cR'_m(F)$ trivially across $b$. 
This can be deduced easily from the shape of irreducible representations of $P_{n+1}(F)$ (cf. \cite{BZ1977}).
Moreover, if $G$ and $G'$ are two $\ell$-groups in the sense of \cite{BZ1976}, and $\pi_i$, $\pi'_i$ are admissible 
representations of $G$, $G'$, respectively, for $i=1,2$, then
\[
{\rm Hom}_{G\x G'}\left((\pi_1\bt\pi_1')\ot(\pi_2\bt\pi'_2),\mathbb{C}\right)
\cong
{\rm Hom}_G(\pi_1\ot\pi_2,\mathbb{C})\ot{\rm Hom}_{G'}(\pi'_1\ot\pi'_2,\mathbb{C}).
\]
With these,  one can then use \eqref{E:P_n+1} and the arguments in \cite[pp. 55-57]{Soudry1993}, \cite[Appendix]{YCheng2} 
to obtain the desired result.
\end{proof}

\end{document}